\declaretheoremstyle[
  spaceabove=1em, spacebelow=1em,
  headfont= \bfseries,
  notefont=\mdseries, notebraces={(}{)},
  bodyfont=\normalfont,
  postheadspace=1em,
  qed= \tiny $\blacksquare$
]{example}
\declaretheoremstyle[
  spaceabove=\topsep, spacebelow=\topsep,
  headfont= \bfseries,
  notefont=\mdseries, notebraces={(}{)},
  bodyfont=\normalfont,
  postheadspace=1em,
  qed= $\blacktriangledown$
]{remark}
\declaretheoremstyle[
  spaceabove=\topsep, spacebelow=\topsep,
  headfont= \bfseries,
  notefont=\mdseries, notebraces={(}{)},
  bodyfont=\it{\normalfont},
  postheadspace=1em,
  qed= \qedsymbol
]{obvious}
\theoremstyle{plain}
\newtheorem{Thm}{Theorem}[section]
\newtheorem{Prop}[Thm]{Proposition}
\newtheorem{Cor}[Thm]{Corollary}
\newtheorem{Lem}[Thm]{Lemma}
\theoremstyle{definition}
\newtheorem{Def}[Thm]{Definition}
\numberwithin{equation}{section}
\numberwithin{table}{section}
\newcommand{\bmid}{\;\big|\;}
\newcommand{\C}{\mathbb{C}}
\newcommand{\Z}{\mathbb{Z}}
\newcommand{\Q}{\mathbb{Q}}
\newcommand{\QG}{\Q[G]}
\newcommand{\QH}{\Q[H]}
\newcommand{\FG}{F[G]}
\newcommand{\rr}{\mathcal{R}}
\newcommand{\s}{\mathcal{S}}
\newcommand{\G}{\mathcal{G}}
\renewcommand{\H}{\mathcal{H}}
\renewcommand{\L}{\mathcal{L}}
\newcommand{\K}{\mathcal{K}}
\newcommand{\D}{\mathcal{D}}
\newcommand{\dsum}{\displaystyle\sum}
\DeclareMathOperator{\Aut}{Aut}
\DeclareMathOperator*{\Inn}{Inn}
\DeclareMathOperator{\Span}{Span}
\newcommand{\thmref}[1]{Theorem \ref{#1}}
\newcommand{\corref}[1]{Corollary \ref{#1}}
\newcommand{\lemref}[1]{Lemma \ref{#1}}
\newcommand{\propref}[1]{Proposition \ref{#1}}
\newcommand{\defref}[1]{Definition \ref{#1}}
\newcommand{\tableref}[1]{Table \ref{#1}}
\newcommand{\secref}[1]{Section \ref{#1}}
\newcommand{\figref}[1]{Figure \ref{#1}}
\newcommand{\bigwr}{\scalebox{2}{$\wr$}}
\renewcommand{\O}{\mathcal{O}}
\newcommand{\specialcell}[2][c]{\begin{tabular}[#1]{@{}c@{}}#2\end{tabular}}
\begin{document}

\title{Counting Schur Rings over Cyclic Groups}
\author{Andrew Misseldine}

\begin{abstract}
{Any Schur ring is uniquely determined by a partition of the elements of the group. An open question in the study of Schur rings is determining which partitions of the group induce a Schur ring. Although a structure theorem is available for Schur rings over cyclic groups, it is still a difficult problem to count all the partitions. For example, Kovacs, Liskovets, and Poschel  determine formulas to count the number of wreath-indecomposable Schur rings. In this paper we solve the problem of counting the number of all Schur rings over cyclic groups of prime power order and draw some parallels with Higman's PORC conjecture.
}

\textbf{Keywords}:
Schur Ring, cyclic group, cyclotomic field, Catalan number, Schr\"oder number, PORC conjecture

\textbf{AMS Classification}: 
20C05, 
11R18  

\end{abstract}

\maketitle

\section{Introduction}
Let $F$ be a field of characteristic zero, and let $A$ denote an $F$-algebra. For any finite subset $C\subseteq A$, let $\overline{C} = \sum_{x\in C} x\in A$.  Let $G$ denote a finite group, and let $\FG$ denote the group algebra of $G$ with coefficients from $F$. We say that an element $\alpha\in \FG$ is a \emph{simple quantity} if there exists some subset $C\subseteq G$ such that $\alpha = \overline{C}$. Let $\{C_1, C_2, \ldots, C_r\}$ be a partition of a finite group $G$, and let $S$ be the subspace of $\FG$ spanned by $\overline{C_1}, \overline{C_2},\ldots \overline{C_r}$, that is, $S$ is spanned by simple quantities and contains the element $\overline{G}$. We say that $S$ is a \emph{Schur ring} \cite[Wielandt]{Wielandt49} over $G$ if 
\begin{enumerate}
\item $C_1 = \{1\}$, 
\item For each $i$, there is a $j$ such that $(C_i)^{-1} = C_j$,
\item For each $i$ and $j$, $\overline{C_i}\cdot\overline{C_j} = \dsum_{k=1}^r \lambda_{ijk}\overline{C_k}$, for $\lambda_{ijk}\in F$.
\end{enumerate} Schur rings were originally developed by Schur and Wielandt in the first half of the 20th century. Schur rings were first used to study permutation groups, but in later decades applications of Schur rings have emerged in combinatorics, graph theory, and design theory \cite{KlinPoschel, Ma}.

As mentioned above, any Schur ring is uniquely determined by a partition of the elements of the group, although not every partition determines a Schur ring. An open question in the study of Schur rings is determining which partitions of the group induce a Schur ring and which ones do not. Much work has be done to answer this question. In the case that our group $G$ is cyclic, a complete classification has been found \cite{LeungII, LeungI}; see \thmref{thm:LeungMan}. In particular, the study of Schur rings over cyclic groups is a very active field with several recent papers being published on this topic: \cite{Kovacs}, \cite{Leung90}, \cite{LeungII}, \cite{LeungI}, \cite{Muzychuk98}, \cite{Muzychuk93}, and \cite{Muzychuk94}.

In this paper we consider the problem of counting the number of Schur rings over $Z_n$, the cyclic group of order $n$. Although a structure theorem is available for Schur rings over cyclic groups, this still proves to be a difficult problem. Specializations of this problem have been considered before. For example, in \cite{Kovacs} Kov\'acs determines a formula to count the number of Schur rings over $Z_{2^n}$ which are \emph{wreath-indecomposable}, that is, those Schur rings which cannot be properly factored as a wreath product of Schur rings (see page \pageref{wreath}). Kov\'acs' formula involves the Catalan and Schr\"oder numbers. We will see these again when we consider Schur rings over cyclic 2-groups in \secref{sec:CountEven}. In \cite{Liskovets}, Liskovets and P\"oschel determine a formula for wreath-indecomposable Schur rings over $Z_{p^n}$, where $p$ is an odd prime. This formula depends on the Catalan numbers and the number of divisors of $p-1$. We will also see these quantities again when we consider Schur rings over cyclic $p$-groups in \secref{chap:Counting}.

A function $f$  on a subset of integers $S$ is called \emph{polynomial on residue classes} or \emph{PORC} if there exists a positive integer $m$ and rational polynomials $p_0,\ p_1,\ldots, p_{m-1}$ such that, for all $x\in S$, $f(x) = p_a(x)$ whenever $x\equiv a\pmod m$ and $0\le a< m$. In other words, $f$ acts like a polynomial on the residue classes modulo $m$. For a natural number $n$ and prime number $p$, let $G(p,n)$ denote the number of isomorphism classes of groups of order $p^n$. The Higman's PORC conjecture states that for a fixed $n$ and $S$ is the set of primes $G(p,n)$ is a PORC function. Because a function on primes is PORC if and only if it is PORC on all but finitely many primes, it suffices to show that a function is PORC for sufficiently large primes. The conjecture has been varied for $n \le 7$ and we include in \tableref{tab:porc} these polynomials (see \cite{VaughanLeeI, VaughanLeeII} and their references for the details).

\begin{table}[hbtp]
\caption{$G(p,n)$ for $n\le 7$}
\begin{center}
\begin{tabular}{|c|c|}
\hline
$n$ & $G(p,n)$\\\hline
1 & $1$ \\\hline
2 & $2$ \\\hline
3 & $5$ \\\hline
4 & $15$ if $p\ge 3$ \\\hline
5 & $2p + 61 + 2\gcd(p-1,3) + \gcd(p-1,4)$ if $p\ge 5$\\\hline
6 & $3p^2+39p +344 +24\gcd(p-1,3) + 11\gcd(p-1,4) + 2\gcd(p-1,5)$ if $p\ge 5$\\\hline
7 & \specialcell{$3p^5+12p^4+44p^3+170p^2+707p+2455$\\ $+\ (4p^2+44p+291)\gcd(p-1,3)+ (p^2+19p+135)\gcd(p-1,4) $\\ $+\ (3p+31)\gcd(p-1,5) + 4\gcd(p-1,7) + 5\gcd(p-1,8) + \gcd(p-1,9)$ if $p\ge 7$} \\\hline
\end{tabular}
\end{center}
\label{tab:porc}
\end{table}

Although Higman's PORC conjecture is likely untrue for $n\ge 10$ \cite{VaughanLeeIII}, various PORC conjectures have been proven for specific families of $p$-groups, e.g. \cite{HigmanPORCI, HigmanPORCII, evseev2008higman, WittyPORC}. In the language of the PORC conjecture, \thmref{thm:formula3rd} shows that the number of Schur rings over a cyclic $p$-group is satisfies the PORC conjecture for all $n$. We mention that the polynomials in \tableref{tab:porc} depend on powers of $p$ and $\gcd(p-1,k)$ for various integers $k$. On the other hand, the polynomials for the number of Schur rings depends entirely only $d(p-1)$, the number of divisors of $p-1$ which we will simply abbreviate as $x$.

\section{Orbit Algebras and Cyclotomic Fields}\label{sec:cycFields}

 Let $A$ be an algebra over a field $F$ and let $\H \le \Aut_F(A)$ be finite, where $\Aut_F(A)$ is the group of $F$-algebra automorphisms of $A$. Let 
\[A^\H = \{\alpha \in A : \sigma(\alpha) = \alpha,\;\text{for all}\;\sigma\in \H\}.\] Then $A^\H$ is a subalgebra of $A$ and is referred to as an \emph{orbit algebra}. The orbit algebra $A^\H$ is, in fact, the largest subalgebra of $A$ that is fixed by all elements of $\H$. Such subalgebras appear frequently in mathematics, especially in Galois theory. 

Suppose $\mathcal{B} \subseteq A$ such that $A = \Span_F(\mathcal{B})$. For each $\alpha\in A$, let $\O_\alpha = \{\sigma(\alpha) : \sigma \in \H\}$ denote the orbit of $\alpha$ in $A$ with respect to $\H$ and let
\[\overline{\O_\alpha} = \sum_{\beta \in \O_\alpha} \beta\] denote the \emph{period} of $\alpha$ with respect to $\H$. Then $A^\H = \Span_F\{\overline{\O_\alpha} : \alpha\in \mathcal{B}\}$, that is, $A^\H$ is spanned by the periods of a spanning set of $A$. This fact is the reason orbit algebras have their name.

One example of orbit algebras that will be of use in this paper will be the subfields of a cyclotomic field. Let $\zeta_n = e^{2\pi i/n}\in \C$, a primitive $n$th root of unity, and let $\K_n = \Q(\zeta_n)\subseteq \C$, the corresponding cyclotomic field. Let $\G_n$ denote the Galois group of $\K_n$ over $\Q$. When the context is clear, the subscripts may be omitted. From Galois theory, we know there is a one-to-one correspondence between the subfields of $\K_n$ and the subgroups of $\G_n$. Since every subalgebra of a field is likewise a field, there is a natural correspondence between the $\Q$-subalgebras of $\K_n$ and the subgroups of $\G_n$. In particular,  if $\H \le \G_n$, then $\K_n^\H = \Q(\overline{\O_{\zeta^i_n}} : 0\le i < n)$. By Galois correspondence, every subfield of $\K_n$ is of this form.

Every automorphism on $\K_n$ is determined by the image of $\zeta_n$, which must be a primitive $n$th root. Therefore, $\G_n\cong (\Z/n\Z)^\ast$, where $(\Z/n\Z)^\ast$ denotes the multiplicative subgroup of the ring $\Z/n\Z$ consisting of congruence classes relatively prime to $n$. For each integer $m$ relatively prime to $n$, let $\sigma_m$ denote the  automorphism in $\G_n$ which is determined by $m$. The group $(\Z/n\Z)^\ast$ is, of course, well-known.

\begin{Lem}\label{prop:AutGroups} \mbox{}
\begin{enumerate}[(a)]
\item $(\Z/{2^k}\Z)^\ast \cong Z_2 \times Z_{2^{k-2}}$, for all $k\ge 2$. In the case that $k =1$, $(\Z/2\Z)^\ast = 1$.
\item $(\Z/{p^k}\Z)^\ast \cong Z_{p^{k-1}(p-1)}$, for $k\ge 1$ and $p$ is an odd prime. 
\item Let $n\ge 2$ be an integer with prime factorization $n = p_1^{k_1}\cdots p_r^{k_r}$ and each $p_i$ is a distinct prime. Then $(\Z/n\Z)^\ast \cong (\Z/{p_1^{k_1}}\Z)^\ast\times \ldots \times (\Z/{p_r^{k_r}}\Z)^\ast$.
\end{enumerate}
\end{Lem}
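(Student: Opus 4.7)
The plan is to give the standard proof, treating parts (c), (b), and (a) in turn; part (c) is the quickest and most structural, while parts (a) and (b) require showing that certain explicit elements generate the required cyclic factors.

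For part (c), the key tool is the Chinese Remainder Theorem. The CRT gives a \emph{ring} isomorphism $\Z/n\Z \cong \prod_{i=1}^r \Z/p_i^{k_i}\Z$; since the group of units of a product ring is the product of the unit groups, we obtain $(\Z/n\Z)^\ast \cong \prod_{i=1}^r (\Z/p_i^{k_i}\Z)^\ast$ immediately. So part (c) reduces the problem to prime powers.

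For part (b), I would first show that $(\Z/p\Z)^\ast$ is cyclic of order $p-1$: it is a finite subgroup of the multiplicative group of the field $\Z/p\Z$, and any finite subgroup of the multiplicative group of a field is cyclic (the standard argument using the fact that $x^d - 1$ has at most $d$ roots). Then I would lift a generator $g$ of $(\Z/p\Z)^\ast$ to a generator of $(\Z/p^k\Z)^\ast$. The group $(\Z/p^k\Z)^\ast$ has order $p^{k-1}(p-1)$, so it suffices to exhibit an element of that order. One shows by induction on $k$ that either $g$ or $g + p$ satisfies $(g+\epsilon p)^{p-1} \equiv 1 + cp \pmod{p^2}$ with $p \nmid c$; then a lifting-the-exponent argument shows $(1+cp)^{p^{k-2}} \equiv 1 + cp^{k-1} \not\equiv 1 \pmod{p^k}$, so the order in $(\Z/p^k\Z)^\ast$ is exactly $p^{k-1}(p-1)$. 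Here the hypothesis $p$ odd is crucial, since the binomial expansion of $(1+cp)^p$ has its quadratic term $\binom{p}{2}c^2 p^2 = \frac{p(p-1)}{2} c^2 p^2$ absorbed into $p^3$ only when $p$ is odd.

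For part (a), the cases $k=1,2$ are checked directly: $(\Z/2\Z)^\ast$ is trivial, and $(\Z/4\Z)^\ast = \{1,3\}$ is $Z_2$. For $k \ge 3$, the group has order $2^{k-1}$, and I claim it is generated by $-1$ and $5$. The key computation is that $5$ has order exactly $2^{k-2}$ modulo $2^k$; by induction on $k$ one proves $5^{2^{k-3}} \equiv 1 + 2^{k-1} \pmod{2^k}$, which is nontrivial modulo $2^k$ but squares to $1$ modulo $2^k$. Then one checks $-1 \notin \langle 5 \rangle$ (the subgroup $\langle 5 \rangle$ lies inside the residues $\equiv 1 \pmod 4$, while $-1 \equiv 3 \pmod 4$), so $\langle -1 \rangle \times \langle 5 \rangle$ has order $2 \cdot 2^{k-2} = 2^{k-1}$ and equals $(\Z/2^k\Z)^\ast$.

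The main technical obstacle is the lifting argument in part (b), specifically arranging that the chosen lift of the mod-$p$ primitive root actually has full order $p^{k-1}(p-1)$ modulo $p^k$; this is where the distinction between $p$ odd and $p=2$ forces the different statements in (a) and (b).
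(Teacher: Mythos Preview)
Your argument is the standard textbook proof and is correct. The paper, however, does not prove this lemma at all: immediately before stating it the author writes ``The group $(\Z/n\Z)^\ast$ is, of course, well-known,'' and the lemma is then stated without proof and used as background. So there is nothing to compare against; your write-up simply supplies the omitted classical argument.
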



For each divisor $d$ of $n$, there is a natural quotient map $\G_n \to \G_d$ given by restriction, that is, each automorphism $\sigma : \K_n \to \K_n$ maps to its restriction $\sigma|_{\K_d} : \K_d \to \K_d$. Thus, each subgroup $\H\le \G_n$ induces a unique subgroup of $\G_d$. By abuse of notation, we will denote this quotient group also as $\H$. Since $\H$ can be identified as a set of integers modulo $n$, we may also identify $\H$ with this same set of integers but instead modulo $d$. Then it follows from above that \begin{equation}\label{cor:subfieldPeriod} \K_n^\H\cap \K_d = \K_d^{\H}.\end{equation} Furthermore, $\K_d^{\H}$ is the maximal subfield of $\K_d$ contained in $\K_n^\H$.

Let $\L_{n}$ be the lattice of subfields of $\K_{n}$. In the case that $n$ is a power of a prime, the lattice of subfields of $\K_n$ is naturally \emph{layered} by the powers of the prime.  Let the \emph{0th layer} of $\L_{p^n}$ be $\L_0 = \{\Q\}$. For $k\ge 1$, the \emph{$k$th layer} of $\L_{p^n}$ is $\L_{p^k}\setminus \L_{p^{k-1}}$. In particular, the layers form a partition of $\L_{p^n}$. The \emph{top layer} of $\L_{p^n}$ is the $n$th layer.  We define the \emph{bottom layer} of $\L_{p^n}$ to be $\L_p$, which is the union of the 1st and 0th layers. 

By \lemref{prop:AutGroups}, the Galois groups of powers of 2 behave differently from the Galois groups of powers of an odd prime. Thus, we must consider the two cases separately. We will address the odd prime case first, followed by the even case.

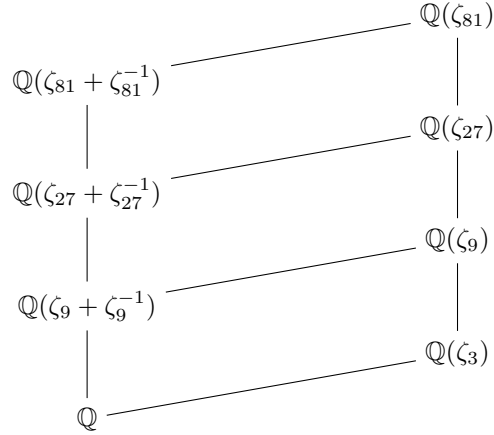
\begin{figure}[htbp]
\caption{The Lattice of Subfields of $\Q(\zeta_{3^4})$.}
\begin{center}
\begin{tikzpicture}[scale= 2.]
\path (0:0cm) node (l10) {$\Q$};
\path (10:2.5cm) node (l11) {$\Q(\zeta_3)$};

\path (l10) +(0,0.75) node (l20) {$\Q(\zeta_9+\zeta_{9}^{-1})$};
\path (l11) +(0,0.75) node (l21) {$\Q(\zeta_{9})$};
\path (l20) +(0,0.75) node (l30) {$\Q(\zeta_{27}+\zeta_{27}^{-1})$};
\path (l21) +(0,0.75) node (l31) {$\Q(\zeta_{27})$};
\path (l30) +(0,0.75) node (l40) {$\Q(\zeta_{81}+\zeta_{81}^{-1})$};
\path (l31) +(0,0.75) node (l41) {$\Q(\zeta_{81})$};

\draw (l10) -- (l11);
\draw (l20) -- (l21);
\draw (l30) -- (l31);
\draw (l40) -- (l41);

\draw (l10) -- (l20) -- (l30) -- (l40);
\draw (l11) -- (l21) -- (l31) -- (l41);
\end{tikzpicture}
\end{center}
\label{fig:81}
\end{figure}

\begin{figure}[htbp]
\caption{The Lattice of Subfields of $\Q(\zeta_{5^4})$.}
\begin{center}
\begin{tikzpicture}[scale= 2.]
\path (0:0cm) node (l10) {$\Q$};
\path (10:2.5cm) node (l11) {$\Q(\zeta_5+\zeta_5^{-1})$};
\path (10:5cm) node (l12) {$\Q(\zeta_5)$};
\path (l10) +(0,0.75) node (l20) {$\Q(\zeta_{25}^{(7)})$};
\path (l11) +(0,0.75) node (l21) {$\Q(\zeta_{25}+\zeta_{25}^{-1})$};
\path (l12) +(0,0.75) node (l22) {$\Q(\zeta_{25})$};
\path (l20) +(0,0.75) node (l30) {$\Q(\zeta_{125}^{(57)})$};
\path (l21) +(0,0.75) node (l31) {$\Q(\zeta_{125}+\zeta_{125}^{-1})$};
\path (l22) +(0,0.75) node (l32) {$\Q(\zeta_{125})$};
\path (l30) +(0,0.75) node (l40) {$\Q(\zeta_{625}^{(182)})$};
\path (l31) +(0,0.75) node (l41) {$\Q(\zeta_{625}+\zeta_{625}^{-1})$};
\path (l32) +(0,0.75) node (l42) {$\Q(\zeta_{625})$};

\draw (l10) -- (l11) -- (l12);
\draw (l20) -- (l21) -- (l22);
\draw (l30) -- (l31) -- (l32);
\draw (l40) -- (l41) -- (l42);

\draw (l10) -- (l20) -- (l30) -- (l40);
\draw (l11) -- (l21) -- (l31) -- (l41);
\draw (l12) -- (l22) -- (l32) -- (l42);
\end{tikzpicture}
\end{center}
\label{fig:625}
\end{figure}

\begin{figure}[htbp]
\caption{The Lattice of Subfields of $\Q(\zeta_{7^4})$.}
\begin{center}
\begin{tikzpicture}[scale= 2.][back line/.style={densely dotted},
				cross line/.style={preaction={draw=white, -,line width=6pt}}]
\path (0:0cm) node (l10) {$\Q$};
\path (5:3cm) node (l11) {$\Q(\zeta_7+\zeta_7^2 + \zeta_7^4)$};
\path (l10) +(45:1cm) node (l12) {$\Q(\zeta_7+\zeta_7^{-1})$};
\path (l11) +(45:1cm) node (l13) {$\Q(\zeta_7)$};

\path (l10) +(0,1.25) node (l20) {$\Q(\zeta_{49}^{(19)})$};
\path (l11) +(0,1.25) node (l21) {$\Q(\zeta_{49}^{(18)})$};
\path (l12) +(0,1.25) node (l22) {$\Q(\zeta_{49}+\zeta_{49}^{-1})$};
\path (l13) +(0,1.25) node (l23) {$\Q(\zeta_{49})$};

\path (l20) +(0,1.25) node (l30) {$\Q(\zeta_{343}^{(19)})$};
\path (l21) +(0,1.25) node (l31) {$\Q(\zeta_{343}^{(18)})$};
\path (l22) +(0,1.25) node (l32) {$\Q(\zeta_{343}+\zeta_{343}^{-1})$};
\path (l23) +(0,1.25) node (l33) {$\Q(\zeta_{343})$};

\path (l30) +(0,1.25) node (l40) {$\Q(\zeta_{2401}^{(1048)})$};
\path (l31) +(0,1.25) node (l41) {$\Q(\zeta_{2401}^{(1047)})$};
\path (l32) +(0,1.25) node (l42) {$\Q(\zeta_{2401}+\zeta_{2401}^{-1})$};
\path (l33) +(0,1.25) node (l43) {$\Q(\zeta_{2401})$};

\draw (l10) -- (l11) -- (l13);
\draw[densely dotted] (l13) -- (l12) -- (l10);
\draw (l20) -- (l21) -- (l23);
\draw[densely dotted] (l23) -- (l22) -- (l20);
\draw (l30) -- (l31) -- (l33);
\draw[densely dotted] (l33) -- (l32) -- (l30);
\draw (l40) -- (l41) -- (l43) -- (l42) -- (l40);

\draw (l10) --  (l20) -- (l30) -- (l40);
\draw (l11) -- (l21) -- (l31) -- (l41);
\draw[densely dotted] (l12) -- (l22) -- (l32) -- (l42);
\draw (l13) -- (l23) -- (l33) -- (l43);
\end{tikzpicture}
\end{center}
\label{fig:343}
\end{figure}

Let us assume that $p$ is an odd prime and let $\G = \G_{p^n}$. Then, by \lemref{prop:AutGroups}, $\G$ is a cyclic group. Thus, the subfields of $\K_{p^n}$ correspond to the divisors of $|\G|$. Now, $|\G| = p^n-p^{n-1} = p^{n-1}(p-1)$. Let $x$ denote the number of divisors of $p-1$. Then $\G$ has $nx$ subgroups. In particular, when $n =1$, $\K_p$ has $x$ subfields, or in other words, $|\L_p| = x$. By induction, the $k$th layer of $\L_{p^n}$ also contains exactly $x$  subfields, for all $1< k\le n$. Furthermore, it follows by induction, elementary properties of cyclic groups, and basic Galois theory that each layer of $\L_{p^n}$ is lattice-isomorphic to the bottom layer, $\L_p$, and each successive layer sits atop its predecessor by a degree $p$ extension, that is, if $\H\le \G_{p^n}$ such that $\K_{p^n}^{\H}$ is in the top layer, then 
\begin{equation} [\K_{p^k}^\H:\K_{p^{k-1}}^\H] = p, \end{equation} for all $1 < k \le n$. Furthermore, 
\begin{equation}\label{prop:sameDegree} \left[\K_{p^n} : \K_{p^n}^{\H}\middle] = \middle[\K_{p^k} : \K_{p^k}^{\H}\right]\end{equation} for all $1 \le k \le n$. These observations give a method to  build the lattice of subfields for $\K_{p^n}$. Figures \ref{fig:81},  \ref{fig:625}, and \ref{fig:343} offer examples of the lattice $\L_{p^n}$ for primes $p=3$, $5$, and $7$, respectively. In Figures  \ref{fig:625} and \ref{fig:343}, the element $\zeta_{n}^{(m)}$ denotes the period of $\zeta_n$ corresponding to the automorphism group generated by $\sigma_m$.

\begin{figure}[htbp]
\caption{The Lattice of Subfields of $\Q(\zeta_{64})$.}
\begin{center}
\begin{tikzpicture}[scale= 2.]

\path (0,0) node (l00) {$\Q$};

\path (l00) +(6,1.5 cm) node (l23) {$\Q(i)$};

\path (l00) +(0,1.5 cm) node (l31) {$\Q(\zeta_8+\zeta_8^{-1})$};
\path (l00) +(3,1.5 cm) node (l32) {$\Q(\zeta_{8}-\zeta_{8}^{-1})$};
\path (l23) +(0,1.5 cm) node (l33) {$\Q(\zeta_8)$};

\path (l31) +(0,1.5 cm) node (l41) {$\Q\left(\zeta_{16}+\zeta_{16}^{-1}\right)$};
\path (l32) +(0,1.5 cm) node (l42) {$\Q\left(\zeta_{16}-\zeta_{16}^{-1}\right)$};
\path (l33) +(0,1.5 cm) node (l43) {$\Q(\zeta_{16})$};

\path (l41) +(0,1.5 cm) node (l51) {$\Q\left(\zeta_{32}+\zeta_{32}^{-1}\right)$};
\path (l42) +(0,1.5 cm) node (l52) {$\Q\left(\zeta_{32}-\zeta_{32}^{-1}\right)$};
\path (l43) +(0,1.5 cm) node (l53) {$\Q(\zeta_{32})$};

\path (l51) +(0,1.5 cm) node (l61) {$\Q\left(\zeta_{64}+\zeta_{64}^{-1}\right)$};
\path (l52) +(0,1.5 cm) node (l62) {$\Q\left(\zeta_{64}-\zeta_{64}^{-1}\right)$};
\path (l53) +(0,1.5 cm) node (l63) {$\Q(\zeta_{64})$};

\draw (l00) -- (l23) -- (l33) -- (l43) -- (l53) -- (l63);
\draw (l00) -- (l31) -- (l41) -- (l51) -- (l61);

\draw (l00) -- (l32) -- (l33); \draw (l31) -- (l42) -- (l43); \draw (l41) -- (l52) -- (l53); \draw (l51) -- (l62) -- (l63);
\draw (l31) -- (l33); \draw (l41) -- (l43);  \draw (l51) -- (l53); \draw (l61) -- (l63);

\end{tikzpicture}
\end{center}
\label{fig:64}
\end{figure}

Next, we will switch our attention to the case when $p=2$. As seen in \lemref{prop:AutGroups}, $\G_{2^n} \cong Z_2\times Z_{2^{n-2}}$ for $n\ge 2$ and $\G_2 = 1$. In particular, $\G_4 \cong Z_2$, and hence $\K_4 = \Q(i)$ contains two subfields: $\Q(i)$ and $\Q$. For $\K_8$, we have that $\G_8 \cong Z_2\times Z_2$, the Klein 4-group. Thus, $\K_8$ has 5 subfields: $\Q$, $\Q(i)$, $\Q(\zeta_8)$, $\Q(\zeta_8+\zeta_8^{-1})$, and $\Q(\zeta_8+\zeta_8^3)$. 

For the fourth layer of the lattice, we notice that $\G_{16} \cong Z_2\times Z_4$ contains a copy of $Z_2\times Z_2$ and hence contains three additional subgroups: two subgroups of order 4 and a subgroup of order 8. Thus, $\L_{16}$ contains three additional fields outside of $\L_8$ by Galois correspondence. These fields are in fact $\Q(\zeta_{16}),$ $\Q(\zeta_{16}+\zeta_{16}^{-1})$, and $\Q(\zeta_{16}+\zeta_{16}^7)$. 

In general, $Z_2\times Z_{2^{n-2}}$ has three more subgroups than $Z_2\times Z_{2^{n-3}}$, and hence $\L_{2^n}$ has three more fields than $\L_{2^{n-1}}$ for $n\ge 3$. One of the fields is certainly $\Q(\zeta_{2^n})$. Since $\G_{2^n}$ is a 2-group, the remaining two fields must correspond to subgroups of $\G_{2^n}$ of order 2. The group $Z_2\times Z_{2^{n-2}}$ has three elements of order 2. In particular, $\sigma_{2^n-1}, \sigma_{2^{n-1}-1}$, and $\sigma_{2^{n-1}+1}$ have order 2 in $\G_{2^n}$. But $\zeta^{2^{n-1}} = -1$. So, for the subgroup $\H = \langle \sigma_{2^{n-1}+1}\rangle$, $\overline{\O_\zeta} = \zeta+\zeta^{2^{n-1}+1} =  \zeta - \zeta = 0$. Also, $\overline{\O_{\zeta^2}} = \overline{\O_{\zeta_{2^{n-1}}}} = \zeta_{2^{n-1}}$. Hence, $\Q(\zeta_{2^{n-1}}) \subseteq \Q(\zeta_{2^n})^\H$, which implies that $\Q(\zeta_{2^n})^\H = \Q(\zeta_{2^{n-1}})$ by degree considerations. Therefore, the additional two fields are $\Q(\zeta+\zeta^{-1})$ and $\Q(\zeta-\zeta^{-1})$. 

Next, we notice that $\Q(\zeta_{2^n})^{\langle \sigma_{2^{n}-1}\rangle}\cap \Q(\zeta_{2^{n-1}}) = \Q(\zeta_{2^n})^{\langle \sigma_{2^{n-1}-1}\rangle}\cap \Q(\zeta_{2^{n-1}}) = \Q(\zeta_{2^{n-1}})^{\langle \sigma_{2^{n-1}-1}\rangle}\linebreak = \Q(\zeta_{2^{n-1}}+\zeta_{2^{n-1}}^{-1})$, by induction. Also, the automorphisms $\sigma_{2^n-1}$ and $\sigma_{2^{n-1}-1}$ are contained in exactly one subgroup of order 4 in $\G_{2^n}$, which corresponds to $\Q(\zeta_{2^{n-1}}+\zeta_{2^{n-1}}^{-1})$.   These observations lead to a method to build $\L_{2^n}$ for any $n$. In particular, each new layer of $\L_{2^n}$ contains three new fields: $\Q(\zeta_{{2^n}})$, $\Q(\zeta_{{2^n}}+\zeta_{{2^n}}^{-1})$, and $\Q(\zeta_{{2^n}}-\zeta_{{2^n}}^{-1})$, where the cyclotomic field $\Q(\zeta_{{2^n}})$ sits directly above the other two fields and $\Q(\zeta_{2^{n-1}})$ and where the fields $\Q(\zeta_{{2^n}}+\zeta_{{2^n}}^{-1})$ and $\Q(\zeta_{{2^n}}-\zeta_{{2^n}}^{-1})$ sit directly above $\Q(\zeta_{2^{n-1}}+\zeta_{2^{n-1}}^{-1})$. To illustrate, we construct $\L_{64}$ in \figref{fig:64}.

\section{Schur Rings}\label{sec:Schur}
Let $S$ be a Schur ring over the finite group $G$ afforded by the partition $\{C_1, C_2, \ldots, C_r\}$. The subsets $C_1, \ldots, C_r$ are called the \emph{primitive sets} of $S$ or \emph{$S$-classes}.  Let $\D(S)$ denote the set of $S$-classes. If $C\subseteq G$ and $\overline{C}\in S$, then $C$ is called an \emph{$S$-set}. If $C$ is also a subgroup of $G$, then we say that $C$ is an \emph{$S$-subgroup} of $G$. If $H$ is an $S$-subgroup, then let 
\[S_H = \Span_F\{\overline{C_i} : C_i \subseteq H\}.\] Then $S_H$ is a Schur ring over $H$.

%
%

Define additional operations on $\FG$ as follows: 
\[* : \FG \to \FG :\quad \left(\sum_{g\in G} \alpha_gg\right)^* = \sum_{g\in G} \alpha_gg^{-1}\] and the \emph{Hadamard product} \[\circ : \FG\times \FG \to \FG: \quad \left(\sum_{g\in G} \alpha_gg\right) \circ \left(\sum_{g\in G}\beta_gg\right) = \sum_{g\in G} \alpha_g\beta_gg.\] Schur rings can then be characterized by these operations. For examples, the subspaces of $\FG$ which are closed under $\circ$ are exactly those spanned by simple quantities.

\begin{Prop}[\cite{Muzychuk94} Lemma 1.3]\label{thm:circleProduct} Suppose that $S$ is a subalgebra of $F[G]$. Then $S$ is a Schur ring if and only if $S$ is closed under $*$ and $\circ$ and $1, \overline{G}\in S$.\end{Prop}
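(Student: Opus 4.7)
The forward implication is immediate from the definition. If $S$ is a Schur ring with primitive sets $\{C_1,\ldots,C_r\}$, then $1=\overline{C_1}\in S$, $\overline{G}=\sum_i\overline{C_i}\in S$, axiom (2) supplies closure under $*$, and the partition property forces $\overline{C_i}\circ\overline{C_j}=\delta_{ij}\overline{C_i}$, giving closure under $\circ$.

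For the converse, the strategy is to extract a partition from the Hadamard structure alone. Identify $F[G]$ with $F^{|G|}$ via coefficient vectors: under this identification $\circ$ becomes coordinate-wise multiplication, the element $\overline{G}$ becomes the multiplicative identity $(1,1,\ldots,1)$, and the simple quantities are precisely the $0/1$ characteristic vectors. Since $F^{|G|}$ has no nonzero nilpotents, the same holds for any $\circ$-subalgebra, so the unital commutative $F$-algebra $(S,\circ)$ is semisimple. Writing $\overline{G}=e_1+\cdots+e_r$ as a sum of pairwise orthogonal primitive $\circ$-idempotents, each $e_k$ is an idempotent of $F^{|G|}$ and thus equals $\overline{C_k}$ for some $C_k\subseteq G$, with the orthogonality and completeness translating directly into $\{C_1,\ldots,C_r\}$ being a partition of $G$. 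A short argument shows $Se_k=Fe_k$: each coordinate projection $\pi_i:Se_k\to F$ ($i\in C_k$) is a unital ring homomorphism into $F$, so if $Se_k$ is a field then its kernel must be zero, forcing any $\alpha\in Se_k$ with a zero coordinate to vanish; applied to $\alpha-\pi_i(\alpha)e_k$, this yields $\alpha\in Fe_k$. Hence $\{\overline{C_1},\ldots,\overline{C_r}\}$ is an $F$-basis of $S$.

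It then remains to verify the three Schur ring axioms against this partition. For axiom (1), the element $1\in S$ is a $\circ$-idempotent supported on $\{e\}$; expanding $1=\sum_i a_i\overline{C_i}$ and imposing $\circ$-idempotence forces each $a_i\in\{0,1\}$, after which matching supports identifies exactly one class with $\{e\}$. For axiom (2), a direct computation yields $(\alpha\circ\beta)^*=\alpha^*\circ\beta^*$, so the restriction of $*$ to $S$ is a $\circ$-algebra automorphism and must permute the primitive $\circ$-idempotents; combined with $\overline{C_i}^{\,*}=\overline{(C_i)^{-1}}$, this gives $(C_i)^{-1}=C_j$ for some $j$. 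Axiom (3) is immediate since $S$ is closed under the group algebra product and the $\overline{C_k}$ form a basis. The main obstacle is the structural step in the converse, namely showing that a unital $\circ$-closed subalgebra of $F[G]$ is spanned by simple quantities indexed by a genuine partition of $G$; once this idempotent decomposition is in hand the Schur ring axioms follow by direct bookkeeping.
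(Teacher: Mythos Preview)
The paper does not supply a proof of this proposition; it is quoted from \cite{Muzychuk94} and used as a black box. Your argument is correct and follows the standard route. The crucial step is the converse, and your treatment is sound: since $(S,\circ)$ sits inside the reduced ring $F^{|G|}$, it is a finite-dimensional commutative $F$-algebra with trivial nilradical, hence semisimple; its primitive idempotents are then $0/1$ vectors whose orthogonality and completeness (summing to $\overline{G}$) give a genuine partition of $G$. The one-line argument that each simple component $Se_k$ equals $Fe_k$ via injectivity of a coordinate projection is clean, and the verification of the three axioms is routine once the basis is in hand. One cosmetic remark: in the sentence ``if $Se_k$ is a field then its kernel must be zero'' the hypothesis is not conditional---$Se_k$ \emph{is} a field because $e_k$ is primitive in a commutative semisimple ring---so you may want to phrase that as a direct assertion.
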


\begin{Cor}\label{prop:intersect} Let $S$ and $T$ be Schur rings over $G$. Then $S\cap T$ is a Schur ring over $G$.\end{Cor}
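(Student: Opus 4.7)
The plan is to invoke the characterization of Schur rings provided by \propref{thm:circleProduct}, which says that a subalgebra $R$ of $F[G]$ is a Schur ring if and only if $R$ is closed under $*$ and $\circ$ and contains both $1$ and $\overline{G}$. The corollary then reduces to a routine check of these four properties for the intersection $S \cap T$.

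First I would note that $S \cap T$ is a subalgebra of $F[G]$, since the intersection of two subalgebras is always a subalgebra (it is closed under addition, scalar multiplication, and the group algebra product because each of $S$ and $T$ is). Next, since $1 \in S$ and $1 \in T$, we have $1 \in S \cap T$; likewise $\overline{G} \in S \cap T$. For closure under $*$, if $\alpha \in S \cap T$, then $\alpha^* \in S$ because $S$ is closed under $*$ and similarly $\alpha^* \in T$, so $\alpha^* \in S \cap T$. For closure under $\circ$, if $\alpha, \beta \in S \cap T$, then $\alpha \circ \beta \in S$ and $\alpha \circ \beta \in T$, giving $\alpha \circ \beta \in S \cap T$.

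Having verified the four hypotheses of \propref{thm:circleProduct} for the subalgebra $S \cap T$, the proposition immediately yields that $S \cap T$ is a Schur ring over $G$. There is no real obstacle here; the entire content of the statement is that \propref{thm:circleProduct} provides a characterization of Schur rings in terms of properties which are manifestly preserved under intersection, in contrast to the original definition via a partition of $G$ where the analogous statement would require one to argue that a common refinement of the two partitions forms the primitive sets of the intersection.
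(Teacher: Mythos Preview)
Your proof is correct and follows exactly the intended approach: the paper states this result as an immediate corollary of \propref{thm:circleProduct} without spelling out the details, and your verification that the subalgebra $S\cap T$ inherits closure under $*$ and $\circ$ and contains $1$ and $\overline{G}$ is precisely what makes it immediate.
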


Every finite group algebra $F[G]$ is a Schur ring, resulting from the partition of singletons on $G$. The partition $\{\{1\}, G\setminus \{1\}\}$ affords a \emph{trivial Schur ring} over $G$, denoted $\FG^0$.

Let $\H\le \Aut(G)$ and \[F[G]^\H = \{\alpha \in F[G] : \sigma(\alpha) = \alpha,\; \text{for all}\; \sigma\in \H\}.\]  Then $\FG^\H$ is a Schur ring afforded by the partition of $G$ corresponding to the orbits of the $\H$-action on $G$. These Schur rings are referred to as \emph{orbit Schur rings}. The center of $\FG$ is an orbit Schur ring with $\H=\Inn(G)$. Let $\rr(\FG) = \FG^{\Aut(G)}$ denote the \emph{rational Schur ring}, whose primitive sets are the automorphism classes of $G$.  For an abelian group $G$, let $\s(\FG) = \FG^{\langle\ast\rangle}$ denote the \emph{symmetric Schur ring}, whose primitive sets are the inverse classes of $G$.

Let $S$ and $T$ be Schur rings over $F[G]$ and $F[H]$, respectively. We naturally can view $G$ and $H$ as subgroups of $G\times H$. Let \[S\cdot T = \Span_F\{\overline{C}\cdot\overline{D} : C\in\D(S), D\in \D(T)\},\] called the \emph{dot product} of $S$ and $T$. This forms a Schur ring with partition $\D(S\cdot T) = \{CD\subseteq G\times H : C\in \D(S), D\in \D(T)\}$. Furthermore, $S\cdot T \cong S\otimes_F T$, as $F$-algebras. Because of this fact, the Schur ring $S\cdot T$ is often called the tensor product of Schur rings.

The notion of \emph{wedge product} of Schur rings presented below is originally based upon the presentation of Leung and Man \cite{LeungI}, although it has been adapted from the original for the purposes of this paper.

Let $H\trianglelefteq G$ and let $S$ be a Schur ring over $G/H$. Let $\pi : G \to G/H$ be the natural quotient map. Consider the partition of $G$ given by $\D = \{\pi^{-1}(C) : C\in \D(S)\},$ that is, if $C = \{g_1H, g_2H, \ldots, g_kH\}\in \D(S)$, then $\pi^{-1}(C) = \bigcup_{i=1}^k g_iH \in \D$.  Let $\pi^{-1}(S) = \Span_F\{\overline{D} : D\in \D\}$. Then $\pi^{-1}(S)$ is a subalgebra over $\FG$ closed under $*$ and $\circ$ and contains $\overline{H}$ and $\overline{G}$, referred to as the \emph{inflated Schur ring} of $S$ over $G$. Let $\D(\pi^{-1}(S)) = \{\pi^{-1}(C) : C\in \D(S)\}$, which is a partition of $G$.

Let $H \trianglelefteq G$, and let $S$ and $T$ be Schur rings over $H$ and $G/H$, respectively. Then \[S\wr T =  S+\pi^{-1}(T),\] called the \emph{wreath product} of $S$ and $T$.  The wreath product is likewise a Schur ring with partition $\D(S\wr T) = \D(S) \cup  (\D(\pi^{-1}(T))\setminus \{H\})$. It follows that $(S\wr T)_H = S$ and $\pi(S\wr T) = T$. 

Let $1 < K \le H <G$  be a sequence of finite groups such that $K\trianglelefteq G$. Let $S$ be a Schur ring over $H$ and $T$ a Schur ring over $G/K$. Let $\pi : G \to G/K$ be the quotient map. Let \[S \wedge_K T = S+\pi^{-1}(T),\] which denotes the \emph{wedge product} of $S$ and $T$. When the context is clear, the subscript may be omitted. If we assume that $H/K$ is a $T$-subgroup, $K$ is an $S$-subgroup, and $\pi(S) = T_{H/K}$, then $S\wedge T$ is a Schur ring over $G$ with partition $D(S\wedge T) = D(S) \cup (\D(\pi^{-1}(T)) \setminus \D(\pi^{-1}(T_{H/K})))$. Like above, it follows that $(S\wedge T)_H = S$ and $\pi(S\wedge T) = T$.  If $H=K$, then $S\wedge T = S\wr T$. Thus, the wedge product of Schur rings is a generalized wreath product of Schur rings.

Let $S$ be a Schur ring over $G$. If there exists subgroups $1 < K \le H < G$, with $K\trianglelefteq G$, and Schur rings $R$ and $T$ over $H$ and $G/K$, respectively, such that $S = R\wedge_K T$, then we say that $S$ is \emph{wedge-decomposable}; otherwise, we say that $S$ is \emph{wedge-indecomposable}. If $S$ is wedge-decomposable, we call $1< K\le H < G$ a \emph{wedge-decomposition} \label{wreath} of $S$. We define the terms \emph{wreath-decomposable}, \emph{wreath-indecomposable}, and \emph{wreath-decomposition} analogously.

Every wreath-decomposable Schur ring is clearly wedge-decomposable and every wedge-indecomposable Schur ring is wreath-indecomposable. On the other hand, there do exist Schur rings which are wreath-indecomposable but wedge-decomposable.



Let $Z_n=\langle z  : z^n\rangle$ denote the cyclic group of order $n$. For each $d\mid n$, let $L_d$ denote all elements of $Z_n$ of order $d$. We will call this the $d$th \emph{layer} of $Z_n$. Each layer is just an automorphism class of $Z_n$. 

Leung and Man used the constructions of Schur rings mentioned above to classify all Schur rings over $Z_n$.

\begin{Thm}[\cite{LeungII, LeungI}]\label{thm:LeungMan} Let $G = Z_n$ and let $S$ be a Schur ring over $G$. Then $S$ is trivial, an orbit ring, a dot product of Schur rings, or a wedge product of Schur rings. \end{Thm}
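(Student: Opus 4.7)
The plan is to prove \thmref{thm:LeungMan} by strong induction on $n = |G|$. The base case $n=1$ is immediate. The inductive step splits according to whether $n$ is a prime power.

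When $n$ has at least two distinct prime divisors, write $G = G_1 \times G_2$ corresponding to a coprime factorization $n = n_1 n_2$ with $n_1, n_2 > 1$. Using the Hadamard product characterization of \propref{thm:circleProduct}, I would first show that each intersection $S \cap F[G_i]$ is a Schur ring over $G_i$. The key structural claim is that either both Sylow components $G_1$ and $G_2$ are $S$-subgroups, in which case a comparison of primitive sets (using that primitive sets refining the cosets of a $S$-subgroup are closed under multiplication by primitive sets inside the subgroup) forces $S = S_{G_1}\cdot S_{G_2}$, or else the partition cannot respect the direct product decomposition and one instead isolates a proper $S$-subgroup from which a wedge decomposition can be built. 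Either outcome reduces to smaller cyclic groups, and induction applies.

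When $n = p^k$ is a prime power, the subgroup lattice of $Z_{p^k}$ is a chain $1 = H_0 < H_1 < \cdots < H_k = G$, so the $S$-subgroups form a sub-chain containing $H_0$ and $H_k$. If the only $S$-subgroups are $H_0$ and $H_k$, I would show $S$ must be an orbit Schur ring by arguing that each primitive set is closed under a subgroup of $\Aut(G)$; this invokes Schur's classical multiplier/conjugacy result, together with the fact that $\Aut(Z_{p^k})$ acts transitively on each layer $L_d$ of $Z_{p^k}$. If there is an intermediate $S$-subgroup $H_j$, the natural candidate for a decomposition is the wreath product $S = S_{H_j}\wr \pi(S)$, where $\pi\colon G \to G/H_j$ is the quotient. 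When the primitive sets of $S$ outside $H_j$ do not split precisely along cosets of $H_j$, one must promote this to a wedge product $S_H \wedge_K T$ by choosing $K$ to be the smallest subgroup such that every $S$-class in $G \setminus H$ is a union of $K$-cosets, and $H$ to be the largest $S$-subgroup carrying the ``fine'' structure.

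The main obstacle I anticipate is verifying the three compatibility conditions required for the wedge product in the prime-power case: that $H/K$ is a $T$-subgroup of $G/K$, that $K$ is an $S$-subgroup of $H$, and that $\pi(S) = T_{H/K}$. Each of these involves tracking how $S$-classes interact with coset structures, and guaranteeing the hypothesis $\pi(S) = T_{H/K}$ is especially delicate because one must rule out the possibility that passing to the quotient forces the collapse of distinct primitive sets. The situation is most technical for $p = 2$, where $\Aut(Z_{2^k})$ is non-cyclic (\lemref{prop:AutGroups}) and admits three distinct order-$2$ subgroups at every layer, producing wedge decompositions that have no analogue for odd primes. This is the technical heart of the arguments in \cite{LeungII, LeungI}, and the place where a self-contained proof would demand the most care.
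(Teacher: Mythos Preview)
The paper does not prove \thmref{thm:LeungMan}; it is stated with the attribution \cite{LeungII, LeungI} and invoked as a black box, with no argument given. There is therefore no ``paper's own proof'' to compare your proposal against.

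That said, your sketch underestimates where the real difficulty lies. The hardest step is not the wedge-compatibility bookkeeping you flag at the end, but the assertion in the prime-power case that if $S$ has no proper nontrivial $S$-subgroup then $S$ must be an orbit Schur ring (or trivial). Appealing to ``Schur's classical multiplier/conjugacy result'' does not settle this: Schur's theorem tells you that $S$-classes are closed under the map $g\mapsto g^m$ for $m$ coprime to $|G|$ only when $S$ arises from a permutation group, and in any case the substantive content here is the converse direction --- showing that a primitive Schur ring over $Z_{p^k}$ is forced to come from a group of automorphisms. Leung and Man's argument for this uses a delicate analysis of the rational idempotents of $S$ and their supports, together with number-theoretic constraints specific to cyclic groups; it is not a consequence of general Schur-ring machinery. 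Your dichotomy in the composite case (``either both Sylow factors are $S$-subgroups and $S$ is a dot product, or one finds a wedge decomposition'') is also too coarse: there exist primitive Schur rings over $Z_n$ with $n$ composite that are orbit rings but not dot products, so the induction must allow for the orbit case there as well, and ruling out anything else again requires the full Leung--Man analysis rather than a direct combinatorial argument on primitive sets.
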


\begin{Cor}\label{thm:pnLeungMan} Let $G = Z_{p^n}$, for some prime $p$, and let $S$ be a Schur ring over $G$. Then $S$ is trivial, an orbit ring,  or a wedge product of Schur rings. \end{Cor}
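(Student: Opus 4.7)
The plan is to apply \thmref{thm:LeungMan} directly to $S$ and then eliminate the dot-product possibility. By that classification, $S$ is trivial, an orbit ring, a dot product of Schur rings, or a wedge product of Schur rings, so the corollary reduces to showing that no (non-trivial) dot-product decomposition can occur when the underlying group is $Z_{p^n}$.

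To rule out the dot product, I would use the fact that $R\cdot T$ is by construction a Schur ring over the direct product $G\times H$ of the two underlying groups. For such a product to describe a Schur ring over $Z_{p^n}$, one would need to realize $Z_{p^n}$ as an internal direct product of two of its subgroups $A$ and $B$, both non-trivial, with $A\cap B = \{1\}$ and $AB = Z_{p^n}$. But the subgroup lattice of $Z_{p^n}$ is the chain $1 < Z_p < Z_{p^2} < \cdots < Z_{p^n}$, so every non-trivial subgroup contains the unique subgroup of order $p$. Consequently, any two non-trivial subgroups of $Z_{p^n}$ have non-trivial intersection, and no such internal decomposition exists.

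Therefore, in any dot-product presentation of $S$, one of the factor groups must be trivial, in which case $R\cdot T$ collapses to its remaining factor and is not a genuine dot product at all. A second application of \thmref{thm:LeungMan} to that remaining factor places $S$ into the trivial, orbit, or wedge-product case. The main ``obstacle'' is really nothing more than recognizing that cyclic $p$-groups are directly indecomposable; all of the substantive content of the corollary is already encapsulated in \thmref{thm:LeungMan}.
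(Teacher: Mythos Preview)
Your proof is correct and follows essentially the same approach as the paper: apply \thmref{thm:LeungMan} and eliminate the dot-product case by observing that a cyclic $p$-group is directly indecomposable. The paper states this in one line, while you spell out the reason via the chain structure of the subgroup lattice; the ``second application'' of \thmref{thm:LeungMan} is unnecessary, since once the dot-product case is excluded the remaining three alternatives already cover $S$.
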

\begin{proof}
Since $G$ is a cyclic $p$-group, it cannot be expressed as a nontrivial direct product of groups. As a consequence, $S$ cannot be expressed as a nontrivial dot product of Schur rings. The result then follows from \thmref{thm:LeungMan}.
\end{proof}

\begin{Cor}\label{thm:pLeungMan}\label{thm:primeCycStructure} Let $G = Z_{p}$, for some prime $p$, and let $S$ be a Schur ring over $G$. Then $S$ is an orbit Schur ring. \end{Cor}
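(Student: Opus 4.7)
The plan is to invoke Corollary \ref{thm:pnLeungMan} applied with $n = 1$ and then eliminate each possibility except ``orbit Schur ring.'' By Corollary \ref{thm:pnLeungMan}, the Schur ring $S$ over $Z_p$ must be trivial, an orbit ring, or a wedge product of Schur rings.

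First I would rule out the wedge product case. The definition of a wedge decomposition requires a chain $1 < K \le H < G$ of subgroups with $K$ normal. Since $G = Z_p$ has no subgroups other than $1$ and $G$ itself, no such $K$ and $H$ exist. Hence $S$ cannot be wedge-decomposable.

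Next I would observe that the trivial Schur ring over $Z_p$ is in fact an orbit Schur ring. The trivial Schur ring $F[Z_p]^0$ is afforded by the partition $\{\{1\},\ Z_p\setminus\{1\}\}$. But $\Aut(Z_p) \cong (\Z/p\Z)^{\ast}$ acts transitively on $Z_p\setminus\{1\}$, so the orbits of $\Aut(Z_p)$ on $Z_p$ are precisely $\{1\}$ and $Z_p\setminus \{1\}$. Therefore $F[Z_p]^0 = F[Z_p]^{\Aut(Z_p)} = \rr(F[Z_p])$, which is an orbit Schur ring. Combining this with the previous paragraph, every Schur ring over $Z_p$ is an orbit Schur ring, as claimed.

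There is no real obstacle here; the argument is a direct structural consequence of Corollary \ref{thm:pnLeungMan} together with the observation that $Z_p$ has a trivial subgroup lattice. The only subtlety worth noting is that although the statement of Corollary \ref{thm:pnLeungMan} lists ``trivial'' as a distinct case, for $G = Z_p$ this case coincides with the orbit ring case, so no separate treatment is needed.
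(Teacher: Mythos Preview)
Your proof is correct and follows essentially the same approach as the paper: you rule out wedge-decomposability because $Z_p$ has no nontrivial subgroups, and you identify the trivial Schur ring with the orbit ring $\rr(F[Z_p])$, which is exactly what the paper does (stating $\FG^0 = \rr(\FG)$ without the explicit justification you supply). The only difference is that you spell out the transitivity of $\Aut(Z_p)$ on $Z_p\setminus\{1\}$, which the paper leaves implicit.
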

\begin{proof}
Since $G$ has no nontrivial subgroups, $S$ is wedge-indecomposable. Also, $\FG^0 = \rr(\FG)$. Thus, the result then follows from \corref{thm:pnLeungMan}.
\end{proof}

\begin{Cor}\label{cor:possibleNucleus} Let $G = Z_{p^n}$. Then for any wedge-decomposable Schur ring $S$ over $G$, there exists a wedge-decomposition $1< K\le H< G$ such that $S_H$ is a wedge-indecomposable orbit algebra or a trivial Schur ring over $H$. 
\end{Cor}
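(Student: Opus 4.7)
The plan is a minimality argument in the subgroup lattice of $G = Z_{p^n}$. Since $S$ is wedge-decomposable, the collection of pairs $(K, H)$ giving wedge-decompositions $S = S_H \wedge_K \pi(S)$, where $\pi : G \to G/K$, is nonempty; I would choose one with $|H|$ minimum. I then claim this minimality forces $S_H$ to be wedge-indecomposable as a Schur ring over $H$. Granting this, \corref{thm:pnLeungMan} applied to the cyclic $p$-group $H$ classifies $S_H$ as trivial, an orbit Schur ring, or a (nontrivial) wedge product; the last option is ruled out by the claim, giving exactly the conclusion.

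To prove the claim by contradiction, suppose $S_H$ admits a decomposition $S_H = S_{H_1} \wedge_{K_1} T_1$ with $1 < K_1 \le H_1 < H$. Because $H$ is a cyclic $p$-group, $K$ and $K_1$ are comparable as subgroups of $H$, so $\tilde K = K \cap K_1$ is nontrivial and satisfies $\tilde K \le K$ and $\tilde K \le K_1 \le H_1 < G$. Setting $\tilde\pi : G \to G/\tilde K$ and $\tilde T = \tilde\pi(S)$, I aim to show that $(\tilde K, H_1)$ is itself a wedge-decomposition of $S$, namely $S = S_{H_1} \wedge_{\tilde K} \tilde T$; this yields a decomposition with $|H_1| < |H|$, contradicting minimality. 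The routine compatibility conditions ($\tilde K \trianglelefteq G$, $\tilde K$ is an $S_{H_1}$-subgroup, $H_1/\tilde K$ is a $\tilde T$-subgroup, and $\tilde\pi(S_{H_1}) = \tilde T_{H_1/\tilde K}$) follow from the fact that both $\tilde K$ and $H_1$ are $S$-subgroups together with a support argument.

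The main obstacle is the equality $S = S_{H_1} + \tilde\pi^{-1}(\tilde T)$ of Schur rings. For $\tilde\pi^{-1}(\tilde T) \subseteq S$, I would use the identity $\tilde\pi^{-1}(\tilde\pi(\beta)) = \beta \cdot \overline{\tilde K}$, valid for any $\beta \in F[G]$, together with $\overline{\tilde K} \in S$, so that any inflation of an element of $\tilde T = \tilde\pi(S)$ lies in $S$. For the reverse inclusion, I would examine each $S$-class sum $\overline{C}$: if $C \subseteq H_1$, then $\overline{C} \in S_{H_1}$ directly; otherwise $C$ is either an $S_H$-class in $H \setminus H_1$, and hence a union of $K_1$-cosets by the wedge decomposition of $S_H$, or an $S$-class in $G \setminus H$, and hence a union of $K$-cosets by the original wedge decomposition of $S$. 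Since $\tilde K \le K_1$ and $\tilde K \le K$, both types of classes are unions of $\tilde K$-cosets, whence $\overline{C} = \tilde\pi^{-1}(\overline{\tilde\pi(C)}) \in \tilde\pi^{-1}(\tilde T)$.
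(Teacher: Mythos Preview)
Your argument is correct and follows essentially the same route as the paper's proof: choose a wedge-decomposition with $|H|$ minimal, and show that if $S_H$ were itself wedge-decomposable via $1<K_1\le H_1<H$, then $1<K\cap K_1\le H_1<G$ would be a smaller wedge-decomposition of $S$, contradicting minimality; then invoke \corref{thm:pnLeungMan}. The paper is terser, simply asserting that every $S$-class outside $H_1$ is a union of $(K\cap K_1)$-cosets, whereas you spell out the inclusion $S=S_{H_1}+\tilde\pi^{-1}(\tilde T)$ and the compatibility conditions explicitly; but the underlying idea is identical.
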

\begin{proof} 
By assumption, $S$ has a wedge-decomposition $1<K\le H< G$. If $S_H$ is wedge-decomposable, then it also has a wedge decomposition $1 < K' \le H' < H$. Since $K$ and $K'$ are nontrivial subgroups of $Z_{p^n}$, $K\cap K'$ is likewise nontrivial. Next, every $S$-class outside of $H$ is a union of cosets of $K$. So, every such $S$-class is also a union of cosets of $K\cap K'$. Similarly, every $S$-class inside of $H$ but outside of $H'$ is a union of cosets of $K\cap K'$. Therefore, $1<K\cap K' \le H' < G$ is a wedge-decomposition of $S$. It follows that a wedge-decomposition $1<K\le H<G$ of $S$ can be chosen such that $H$ is minimal. Such a choice implies that $S_H$ must be wedge-indecomposable. By \corref{thm:pnLeungMan}, $S_H$ is either a trivial or orbit Schur ring.
\end{proof}

\section{A Correspondence Between Schur Rings and Cyclotomic Fields}

Every automorphism of $Z_n$ is determined by $z\mapsto z^m$, and every automorphism on $\K_n$ is similarly determined by $\zeta\mapsto \zeta^m$, where $m$ is unique modulo $n$ and $\gcd(n,m)=1$. Identifying these congruence classes provides an isomorphism between $\Aut(Z_n)$ and the Galois group $\G_n$. 

Let $\omega_n : \Q[Z_n] \to \Q(\zeta_n)$ be the $\Q$-algebra map uniquely defined by the relation $\omega_n(z) = \zeta_n$. Let $S$ be a Schur ring over $\Q[Z_n]$. Then $\omega_n(S)$ is a subalgebra of $\K_n$ and necessarily must be a subfield. From Galois theory, each subfield of $\K_n$ corresponds to a subgroup of $\G_n$. Likewise, each orbit Schur ring corresponds to a subgroup of $\Aut(Z_n) = \G_n$. This defines a one-to-one correspondence between the subfields of $\K_n$ and the orbit Schur rings of $\Q[Z_n]$. In fact, $\omega_n$ determines this correspondence. Since $\Aut(Z_n) = \G_n$, it holds that $\sigma_m\circ \omega_n = \omega_n\circ \sigma_m$.  Hence, if $\H\le \G_n$, then $\omega_n$ preserves $\H$-orbits and $\H$-periods, that is, $\omega_n(\overline{\O_{z^k}}) = \overline{\O_{\zeta^k}}$ for every $k\in \Z$. Since both $\Q[Z_n]^\H$ and $\K_n^\H$ are spanned by their $\H$-periods, we have \begin{equation}\label{prop:RationalCyclicLattice} \omega_n(\Q[Z_n]^\H) = \K_n^\H. \end{equation}

\begin{Prop}\label{cor:distinctRationals} Let $G=Z_n = \langle z\rangle$ and let $\K= \Q(\zeta_n)$. Then the lattice of orbit Schur rings over $G$ is lattice-isomorphic via $\omega_n$ to the lattice of subfields of $\K$.\end{Prop}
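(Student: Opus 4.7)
The plan is to promote the parameterization of both orbit Schur rings and subfields of $\K_n$ by subgroups of $\G_n$ to a lattice isomorphism, and to verify that $\omega_n$ implements it. By the definition of orbit Schur ring, every orbit Schur ring over $Z_n$ equals $\Q[Z_n]^\H$ for some $\H \le \Aut(Z_n) = \G_n$, and by Galois theory every subfield of $\K_n$ equals $\K_n^\H$ for some $\H \le \G_n$. Equation \eqref{prop:RationalCyclicLattice} already records that $\omega_n(\Q[Z_n]^\H) = \K_n^\H$, so the restriction of $\omega_n$ to orbit Schur rings lands in subfields.

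First I would establish that this restriction is a bijection. Surjectivity is immediate: given a subfield $L \subseteq \K_n$, write $L = \K_n^\H$ for some $\H \le \G_n$, and then $\omega_n(\Q[Z_n]^\H) = L$. For injectivity, suppose $\omega_n(\Q[Z_n]^{\H_1}) = \omega_n(\Q[Z_n]^{\H_2})$. Equation \eqref{prop:RationalCyclicLattice} gives $\K_n^{\H_1} = \K_n^{\H_2}$, so $\H_1 = \H_2$ by the classical Galois correspondence, and hence $\Q[Z_n]^{\H_1} = \Q[Z_n]^{\H_2}$. As a byproduct, distinct subgroups of $\G_n$ yield distinct orbit Schur rings, so the Schur-ring side also enjoys a Galois-style correspondence.

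Next I would verify that the bijection preserves the lattice operations. Both maps $\H \mapsto \Q[Z_n]^\H$ and $\H \mapsto \K_n^\H$ are inclusion-reversing (a larger subgroup fixes fewer elements), so the induced bijection $\Q[Z_n]^\H \leftrightarrow \K_n^\H$ is inclusion-preserving, i.e.\ an order isomorphism of the two lattices. One can also check meets and joins explicitly: the meet $\Q[Z_n]^{\H_1} \cap \Q[Z_n]^{\H_2} = \Q[Z_n]^{\langle \H_1, \H_2\rangle}$ (which is indeed an orbit Schur ring, confirming that the collection of orbit Schur rings is closed under intersection in line with \propref{prop:intersect}) maps via $\omega_n$ to $\K_n^{\langle \H_1, \H_2\rangle} = \K_n^{\H_1} \cap \K_n^{\H_2}$; the join $\Q[Z_n]^{\H_1 \cap \H_2}$, being the smallest orbit Schur ring containing both summands, maps to the compositum $\K_n^{\H_1 \cap \H_2}$.

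The only real obstacle is verifying injectivity on the Schur-ring side, since a priori two different subgroups of $\Aut(Z_n)$ might conceivably induce the same orbit decomposition of $Z_n$; transporting the equality through $\omega_n$ and invoking the classical Galois correspondence for $\K_n/\Q$ is what makes this step work cleanly.
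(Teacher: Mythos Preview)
Your proposal is correct and follows essentially the same approach as the paper: surjectivity from \eqref{prop:RationalCyclicLattice} and injectivity by pushing through $\omega_n$ and invoking the classical Galois correspondence. The paper's proof stops at the bijection and simply declares it a lattice isomorphism, whereas you go a step further and explicitly check that the bijection respects inclusions (and meets/joins), which is arguably a cleaner justification of the word ``lattice'' in the statement.
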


\begin{proof}
Equation \eqref{prop:RationalCyclicLattice} shows that $\omega$ is clearly surjective onto the lattice of subfields. If $\omega(\QG^{\H_1}) = \omega(\QG^{\H_2})$ for $\H_1,\H_2\le \G$, then $\K^{\H_1} = \K^{\H_2}$, but the Fundamental Theorem of Galois theory implies that $\H_1 = \H_2$. Therefore, $\omega$ is injective, which proves that $\omega$ is an isomorphism between these two lattices.
\end{proof}

\begin{Cor}\label{cor:distinctRationals2} Let $\H_1, \H_2\le \G_n$. Then $\Q[Z_n]^{\H_1} = \Q[Z_n]^{\H_2}$ if and only if $\H_1=\H_2$. \end{Cor}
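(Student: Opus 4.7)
The plan is to deduce this immediately from Proposition~\ref{cor:distinctRationals} together with the Fundamental Theorem of Galois theory; indeed, the injectivity argument already given in the proof of that proposition is precisely the content of this corollary, so the task is just to package it cleanly.

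First I would dispose of the trivial direction: if $\H_1 = \H_2$ then the two fixed subalgebras are literally defined by the same condition, hence equal. For the forward direction, I would apply the map $\omega_n$ to both sides of the assumed equality $\Q[Z_n]^{\H_1} = \Q[Z_n]^{\H_2}$, and then invoke equation~\eqref{prop:RationalCyclicLattice}, which gives $\omega_n(\Q[Z_n]^{\H_i}) = \K_n^{\H_i}$ for $i=1,2$. This yields $\K_n^{\H_1} = \K_n^{\H_2}$ inside $\K_n$. The Galois correspondence between subgroups of $\G_n$ and subfields of $\K_n$ then forces $\H_1 = \H_2$, completing the proof.

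There is essentially no obstacle here: the nontrivial content (that $\omega_n$ sends orbit algebras to subfields and that distinct subgroups of $\G_n$ give distinct fixed subfields) has already been set up in Section~\ref{sec:cycFields} and in Proposition~\ref{cor:distinctRationals}. The corollary is really a restatement of the injectivity half of that lattice isomorphism, phrased directly in terms of fixed subalgebras rather than as a lattice map, and so the proof is a one-line appeal to \eqref{prop:RationalCyclicLattice} and Galois correspondence.
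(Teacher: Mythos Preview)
Your proposal is correct and matches the paper's approach exactly: the corollary is stated without an independent proof, since it is precisely the injectivity established in the proof of Proposition~\ref{cor:distinctRationals} via \eqref{prop:RationalCyclicLattice} and the Galois correspondence. Your write-up simply makes that argument explicit.
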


According to \corref{cor:distinctRationals2}, distinct automorphism subgroups of $\G_n$ produce distinct orbit Schur rings over $Z_n$.  \corref{cor:distinctRationals2} is not true for arbitrary groups. For example, let $G = Z_4\times Z_2 = \langle a, b\rangle$ and let \[S = \Span_\Q\{1,\; a^2,\; b+a^2b,\; a+a^3+ab+ab^3\} \cong \Q Z_2\wr \Q Z_2 \wr \Q Z_2.\] In fact, $S = \rr(\QG)$. Furthermore, the automorphism group $\Aut(G)$ is given by 
\[\Aut(G) = \left< \sigma : \substack{a\mapsto a\\\\ b\mapsto a^2b},\; \tau :\substack{ a\mapsto a^3b\\\\ b\mapsto a^2b}\right> \cong D_4.\]
 Let $\H = \langle \tau\rangle \lneq \Aut(G)$.  It can be shown that $\QG^\H = \rr(\QG) = \QG^{\Aut(G)}$, although $\H\neq \Aut(G)$.

Since $\Q \subseteq \Q[Z_n]^0 \subseteq \rr(\Q[Z_n])$, \eqref{prop:RationalCyclicLattice} implies that 
\[\Q = \omega(\Q) \subseteq \omega(\Q[Z_n]^0) \subseteq \omega(\rr(\Q[Z_n])) = \Q. \] Therefore,  
\begin{equation}\label{prop:CycTriv} \omega_n(\Q[Z_{n}]^0) = \Q. \end{equation}




If $\Phi_n(x)\in \Z[x]$ denotes the $n$th cyclotomic polynomial, then $\Q(\zeta_n) \cong \Q[x]/(\Phi_n(x))$. Since $\Phi_n(x) \bmid (x^n-1)$ and $\Q[Z_n] \cong \Q[x]/(x^n-1)$, the quotient map $\Q[x] \to \Q(\zeta_n)$ factors  as the composition $\Q[x] \to \Q[Z_n] \xrightarrow{\;\omega_n\;} \Q(\zeta_n)$. In particular, $\ker \omega_n = (\Phi_n(z)) \subseteq \Q[Z_n]$.

\begin{Lem}\label{lem:coset0} For each prime $p$ dividing $n$, let $Z_p$ denote the subgroup of $Z_n$ of order $p$. Then $\ker \omega_{n} = (\overline{Z_p} : p\mid n) = \Span\{\overline{gZ_p} : p\mid n, g\in Z_n\}$. In particular, a simple quantity of $\Q[Z_{n}]$ is a kernel element if and only if it is a sum of unions of cosets of some non-trivial subgroups of $G$. \end{Lem}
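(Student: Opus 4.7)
The plan is to prove the two claimed equalities separately, reserving the main effort for the identification of the ideal $I := (\overline{Z_p} : p\mid n)$ with $\ker\omega_n$.

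The easy bookkeeping can be dispatched first. The equality $I = \Span\{\overline{gZ_p} : p\mid n,\ g\in Z_n\}$ follows immediately from $g\cdot\overline{Z_p} = \overline{gZ_p}$ together with the fact that $Z_n$ spans $\Q[Z_n]$, so the principal ideal $(\overline{Z_p})$ is nothing but the $\Q$-span of the translates $\overline{gZ_p}$. The inclusion $I \subseteq \ker\omega_n$ is a one-line check: with $Z_p = \langle z^{n/p}\rangle$, we have $\omega_n(\overline{Z_p}) = \sum_{k=0}^{p-1}\zeta_p^k = 0$, so each generator, and hence all of $I$, lies in $\ker \omega_n$.

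The main step, which I expect to be the only place that requires real work, is the reverse inclusion $\ker\omega_n \subseteq I$. My plan is to exploit the Chinese Remainder decomposition
\[
\Q[Z_n] \;\cong\; \prod_{d\mid n}\Q(\zeta_d),\qquad z\longmapsto (\zeta_d)_{d\mid n},
\]
coming from the factorization $x^n-1 = \prod_{d\mid n}\Phi_d(x)$ into pairwise coprime irreducibles over $\Q$. Under this isomorphism $\omega_n$ is precisely the projection onto the $d=n$ coordinate, so $\ker\omega_n$ corresponds to $\bigoplus_{d\mid n,\ d\ne n}\Q(\zeta_d)$. Because the ambient ring is a finite product of fields, any ideal is determined by which coordinate factors it meets nontrivially, so the problem reduces to a factor-by-factor computation of the image of each generator $\overline{Z_p}$. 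A short geometric sum shows that this image equals $p$ in factor $d$ when $d\mid n/p$ and $0$ otherwise (in the latter case $\zeta_d^{n/p}$ is a nontrivial $p$th root of unity). Consequently the image of $I$ covers the $d$th factor exactly when some prime $p\mid n$ satisfies $d\mid n/p$, and a brief divisibility check shows this is equivalent to $d\ne n$: the ``only if'' is immediate since $d\mid n/p$ forces $d<n$, while if $d\ne n$ then $n/d>1$ has a prime divisor $p$ and then $d\mid n/p$. The images of $I$ and $\ker\omega_n$ therefore coincide, giving the desired equality.

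The ``in particular'' assertion is a combinatorial restatement of $\ker\omega_n = \Span\{\overline{gZ_p}\}$: a simple quantity $\overline{C}$ lies in the kernel precisely when it can be written as a sum of period quantities $\overline{gZ_p}$, each of which is itself the simple quantity of a single coset of a non-trivial subgroup $Z_p\le Z_n$.
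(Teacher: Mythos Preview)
Your argument is correct. The paper takes a slightly different route: it works upstairs in $\Q[x]$ rather than in the semisimple quotient $\Q[Z_n]$. Concretely, the paper observes that $\overline{Z_p}$ is the image of $f_p(x) = (x^n-1)/(x^{n/p}-1)$ and then shows that $\gcd\{f_p(x):p\mid n\} = \Phi_n(x)$; since $\Q[x]$ is a PID, the ideal generated by the $f_p$ is $(\Phi_n)$, and passing to $\Q[x]/(x^n-1)$ gives the result. Your CRT argument and the paper's gcd argument are really the same computation viewed from two sides: the image of $\overline{Z_p}$ in the factor $\Q(\zeta_d)$ is nonzero exactly when $\Phi_d \nmid f_p$, so your determination of which factors $I$ hits is equivalent to the paper's determination of which $\Phi_d$ survive in the gcd. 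The paper's version is marginally more elementary (it needs only the PID property and the factorization $x^n-1=\prod_{d\mid n}\Phi_d$), while yours makes the ideal structure of $\Q[Z_n]$ more transparent and arguably explains \emph{why} the answer comes out as it does. Both treatments leave the ``in particular'' clause as an informal restatement, which is how the paper uses it downstream (and only in the prime-power case, where there is a single $Z_p$).
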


\begin{proof}
Let $d\mid n$, and let $f_d(x) = \sum_{k=1}^d x^{n-kn/d}$. Then $\Phi_n(x)$ is the greatest common divisor of $\{f_p(x) : p\mid n,\ p\text{ is prime}\}$.
Therefore, the ideal generated by the $f_p(x)$ is the principal ideal generated by $\Phi_n(x)$. In particular,
\[\ker \omega_n = (\Phi_n(z)) = (f_p(z) : p\mid n) = (\overline{Z_p} : p\mid n). \qedhere\]
\end{proof}

Let $S=S_H\wedge S_{G/K}$ be a wedge-decomposable Schur ring of $\Q[Z_n]$ with wedge decomposition $1< K \le H < G$. If $|H|=m$, then  $\omega_n|_{\QH} = \omega_m$. It then follows that $\omega_m(S_H) =\omega_n(S_H) \subseteq \omega_n(S)$. Conversely, for any $C\in \D(S)\setminus \D(S_H)$, we see that $\overline{C}$ is a union of cosets of $K$ and so $\omega(\overline{C}) =0$, by \lemref{lem:coset0}. Therefore, $\omega(S) \subseteq \omega(S_H)$, which proves that 
\begin{equation}\label{prop:NucleusDetermines} \omega_n(S_H\wedge S_{G/K}) = \omega_m(S_H). \end{equation}

For the remainder of the paper, we will focus on the case when $G = Z_{p^n}$ for a prime $p$.

The kernel $\ker(\omega_{p^n})$ becomes $(\overline{Z_p})$, which is spanned by the cosets of $Z_p$. Since $\ker(\omega_{p^n})$ is spanned by simple quantities, it is closed under the Hadamard product.

\begin{Prop}\label{prop:rationalAll}
Let $G = Z_{p^n}$ for some prime $p$ and let $\rr(\QG) = \QG^{\G}$. Then $\rr(\QG) = \bigwr_{k=1}^n \Q[Z_{p}]^0$.
\end{Prop}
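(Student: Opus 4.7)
The plan is to prove the equality by showing both Schur rings are afforded by the same partition of $Z_{p^n}$, namely the partition into layers $\{\{1\}, L_p, L_{p^2}, \ldots, L_{p^n}\}$. I would proceed by induction on $n$, identifying the primitive sets on each side separately.

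For the left-hand side, I would observe that $\rr(\QG) = \QG^{\Aut(G)}$ is afforded by the orbits of $\Aut(G) = \G$ acting on $G$. Since two elements of the cyclic group $Z_{p^n}$ lie in the same $\Aut(G)$-orbit exactly when they generate the same subgroup, i.e.\ have the same order, and since the action of $(\Z/p^n\Z)^*$ is transitive on elements of each order $p^k$, the orbits are precisely the layers $L_{p^k}$ for $0\le k\le n$.

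For the right-hand side, I would unwind the iterated wreath product inductively. The base case $n=1$ is just the observation that $\Q[Z_p]^0$ is afforded by $\{\{1\}, L_p\}$, which equals $\rr(\Q[Z_p])$ by \corref{thm:pLeungMan} applied to the full Galois group $\G_p$. For the inductive step, assuming $\bigwr_{k=1}^{n-1}\Q[Z_p]^0$ is a Schur ring over $Z_{p^{n-1}}$ with partition $\{\{1\}, L_p, \ldots, L_{p^{n-1}}\}$, I would use the definition of the wreath product with $H = Z_{p^{n-1}}\trianglelefteq Z_{p^n}$, $G/H\cong Z_p$, and the base Schur ring $\Q[Z_p]^0$ on the quotient. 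By the formula $\D(S\wr T) = \D(S)\cup(\D(\pi^{-1}(T))\setminus\{H\})$, the new partition is obtained by keeping the partition of $Z_{p^{n-1}}$ and adjoining the single class $\pi^{-1}(Z_p\setminus\{1\}) = Z_{p^n}\setminus Z_{p^{n-1}} = L_{p^n}$, yielding exactly $\{\{1\}, L_p, \ldots, L_{p^n}\}$.

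Since both sides are Schur rings afforded by the same partition of $Z_{p^n}$, they coincide. The only subtle point is verifying that the iterated wreath product is well-defined at each stage, which reduces to checking that $Z_{p^{k-1}}$ is the base subgroup (necessarily normal, as $Z_{p^k}$ is abelian) and that $\Q[Z_p]^0$ is the appropriate Schur ring on the quotient $Z_{p^k}/Z_{p^{k-1}}\cong Z_p$; this is routine and does not present a real obstacle.
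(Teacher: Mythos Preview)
Your proof is correct and follows essentially the same approach as the paper: both arguments proceed by induction on $n$, identify the $\Aut(Z_{p^n})$-orbits as the layers $L_{p^k}$, and peel off the top layer $L_{p^n}$ as the preimage of the nontrivial class in $\Q[Z_p]^0$ under the quotient $Z_{p^n}\to Z_{p^n}/Z_{p^{n-1}}\cong Z_p$. The only cosmetic difference is that you compute the partitions of both sides separately and compare, whereas the paper directly decomposes $\rr(\QG)$ as $\rr(\Q[Z_{p^{n-1}}])\wr\Q[Z_p]^0$; your citation of \corref{thm:pLeungMan} for the base case is slightly roundabout (the equality $\rr(\Q[Z_p])=\Q[Z_p]^0$ follows immediately from the definitions), but this is harmless.
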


\begin{proof}
Recall that $L_d$ denotes the $d$th layer of $G$, that is, the subset of all elements of order $d$. Then $\rr(\QG) = \Span_\Q\{\overline{L_d} : d\mid p^n\} =  \Span_\Q\{\overline{L_{p^k}} : 1\le k \le n\}$.  For $n=1$, then $\rr(G) = \QG^0$. Assume that the result holds for each $k< n$. For each layer, \[L_{p^k} = \bigcup_{g\in L_{p^k}} gZ_{p^{k-1}},\] that is, $L_{p^k}$ is the union of all nontrivial cosets of $Z_{p^{k-1}}$ in $Z_{p^k}$. Let $\pi : Z_{p^k} \to Z_{p^k}/Z_{p^{k-1}}$ be the natural map. Thus, $\Span\{\overline{Z_{p^{n-1}}}, \overline{L_{p^n}}\} =\pi^{-1}(\Q[Z_{p}]^0)$. Therefore, $\rr(\QG) = \Span\{\overline{L_{p^k}} : 0\le k\le n-1\} \wr \Q[Z_p]^0$. But $ \Span\{\overline{L_{p^k}} : 0\le k\le n-1\} = \rr(\Q[Z_{p^{n-1}}])$. So by induction, \[\rr(G) = \left(\bigwr_{i=0}^{n-1} \Q[Z_p]^0\right) \wr \Q[Z_p]^0 = \bigwr_{k=1}^n \Q[Z_{p}]^0.\qedhere \]
\end{proof}

\begin{Thm}\label{cor:Claim4} Let $S_1$ and $S_2$ be Schur rings over $Z_{p^{n_1}}$ and $Z_{p_{n_2}}$, respectively. Let $n = \max\{n_1, n_2\}$. So, $S_1$ and $S_2$ are subalgebras of $\Q[Z_{p^n}]$. Then \[\omega_n(S_1)\cap \omega_n(S_2) = \omega_n(S_1\cap S_2).\]
\end{Thm}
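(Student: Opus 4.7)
The plan is to verify both inclusions. The forward inclusion $\omega_n(S_1 \cap S_2) \subseteq \omega_n(S_1) \cap \omega_n(S_2)$ is immediate because $\omega_n$ is a $\Q$-algebra homomorphism and $S_1 \cap S_2 \subseteq S_i$ for $i = 1, 2$. For the reverse inclusion, the strategy is to apply Corollaries~\ref{thm:pnLeungMan} and~\ref{cor:possibleNucleus} to each $S_i$ to produce a decomposition $S_i = R_i \wedge T_i$ whose nucleus $R_i$ is either the trivial Schur ring or an orbit Schur ring over some subgroup $H_i \le Z_{p^{n_i}}$; if $S_i$ is itself wedge-indecomposable, I take $H_i = Z_{p^{n_i}}$ and $R_i = S_i$. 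Equation~\eqref{prop:NucleusDetermines} then gives $\omega_n(S_i) = \omega_{|H_i|}(R_i)$.

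Next I would split according to the type of the nuclei. If either $R_i$ is trivial then~\eqref{prop:CycTriv} forces $\omega_n(S_i) = \Q$, so $\omega_n(S_1) \cap \omega_n(S_2) = \Q$, which is contained in $\omega_n(S_1 \cap S_2)$ since $1 \in S_1 \cap S_2$. In the remaining case both $R_i = \Q[H_i]^{\H_i'}$ are orbit Schur rings for some $\H_i' \le \G_{|H_i|}$; since subgroups of a cyclic $p$-group form a chain, assume without loss of generality that $H_1 \le H_2$. Then $\omega_n(S_1) = \K_{|H_1|}^{\H_1'} \subseteq \K_{|H_1|}$, and~\eqref{cor:subfieldPeriod} gives $\omega_n(S_2) \cap \K_{|H_1|} = \K_{|H_1|}^{\H_2'|_{H_1}}$, whence $\omega_n(S_1) \cap \omega_n(S_2) = \K_{|H_1|}^{\langle \H_1',\, \H_2'|_{H_1} \rangle}$.

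To realize this field as the image of something inside $S_1 \cap S_2$, I observe that $H_1$ is characteristic in $H_2$, hence an $R_2$-subgroup, so $(R_2)_{H_1} = \Q[H_1]^{\H_2'|_{H_1}}$ is a well-defined orbit Schur ring over $H_1$. The orbit Schur ring $R_1 \cap (R_2)_{H_1} = \Q[H_1]^{\langle \H_1',\, \H_2'|_{H_1}\rangle}$ then sits inside $R_1 \cap R_2 \subseteq S_1 \cap S_2$, and Proposition~\ref{cor:distinctRationals} identifies its image under $\omega_n$ as $\K_{|H_1|}^{\langle \H_1',\, \H_2'|_{H_1}\rangle} = \omega_n(S_1) \cap \omega_n(S_2)$, finishing the reverse inclusion. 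The main obstacle is the mismatch of levels $H_1 \ne H_2$: constructing $(R_2)_{H_1}$ as an orbit ring depends on $R_2$ being an orbit ring (so that every characteristic subgroup of $H_2$ is automatically an $R_2$-subgroup), which is exactly why the trivial-nucleus case must be isolated and handled separately before the Galois-correspondence argument can be applied.
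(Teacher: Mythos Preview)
Your argument is correct and follows essentially the same route as the paper: isolate the case where one image is $\Q$, then in the remaining case use \corref{cor:possibleNucleus} and \eqref{prop:NucleusDetermines} to reduce to orbit Schur rings $R_i = \Q[H_i]^{\H_i'}$ over subgroups $H_i$, and exhibit the intersection of the corresponding fixed fields as the $\omega_n$-image of an orbit ring contained in $S_1\cap S_2$. The only cosmetic difference is that the paper keeps the notation symmetric by writing $H=H_1\cap H_2$ and $\H=\H_1\H_2$ rather than assuming $H_1\le H_2$, but since the subgroups of $Z_{p^n}$ form a chain the two presentations are equivalent.
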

\begin{proof} For functions, it is always true that 
\[\omega_n(S_1\cap S_2) \subseteq \omega_n(S_1)\cap \omega_n(S_2).\] Suppose that $\omega_n(S_1) = \Q$. Then 
\[\Q\subseteq \omega_n(S_1\cap S_2) \subseteq \omega_n(S_1)\cap\omega_n(S_2) = \Q\cap \omega_n(S_2) = \Q.\] Therefore, $\omega_n(S_1\cap S_2) = \omega_n(S_1) \cap \omega_n(S_2).$\\

Let $i=1,2$. Suppose next that $\omega_n(S_i) = \K_n^{\H_i}$ for $\H_i\le \G_n$. Let $H_i\le Z_{p^{n_i}}$ such that $H_i$ is minimal with respect to the property $\omega_n(S_i) = \omega((S_i)_{H_i})$. By \corref{cor:possibleNucleus} and \eqref{prop:NucleusDetermines}, $(S_i)_{H_i} = \Q[Z_{p^{n_i}}]^{\H_i}$. Let $H = H_1\cap H_2$ and $\H = \H_1\H_2$. Let $m = |H|$. Then 
\begin{eqnarray*}
\omega_n(S_1\cap S_2) & \supseteq& \omega_n((S_1)_{H_1}\cap (S_2)_{H_2}) \\
&=& \omega_n(\Q[Z_{p^{n_1}}]^{\H_1}\cap \Q[Z_{p^{n_2}}]^{\H_2}) = \omega_n(\Q[H]^{\H})\\
& =& \K_m^\H = \K_{m_1}^{\H_1}\cap \K_{m_2}^{\H_2} = \omega_n(\Q[H_1]^{\H_1})\cap \omega_n(\Q[H_2]^{\H_2})\\
&=& \omega_n((S_1)_{H_1})\cap\omega_n((S_2)_{H_2}) = \omega_n(S_1)\cap\omega_n(S_2),
\end{eqnarray*} which finishes the proof.
\end{proof}

\begin{Cor}\label{cor:Chuck2} Let $S$ be a Schur ring over $Z_{p^n}$. If $H=Z_d$ is an $S$-subgroup, then \[\omega(S) \cap \Q(\zeta_{d}) = \omega(S_H).\] \end{Cor}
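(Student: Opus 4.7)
The plan is to deduce this directly from \thmref{cor:Claim4} by choosing the second Schur ring to be the full group algebra of $H$. Set $S_1 = S$ and $S_2 = \Q[H] = \Q[Z_d]$, viewed as a subalgebra of $\Q[Z_{p^n}]$. Note that $\Q[Z_d]$ is itself a (discrete) Schur ring over $Z_d$, and the restriction of $\omega_n$ to $\Q[Z_d]$ is $\omega_d$, so $\omega_n(\Q[Z_d]) = \omega_d(\Q[Z_d]) = \Q(\zeta_d)$. Applying \thmref{cor:Claim4} immediately gives
\[\omega(S) \cap \Q(\zeta_d) \;=\; \omega(S_1) \cap \omega(S_2) \;=\; \omega(S_1 \cap S_2) \;=\; \omega(S \cap \Q[Z_d]).\]

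So it remains to identify $S \cap \Q[Z_d]$ with $S_H$. The inclusion $S_H \subseteq S \cap \Q[Z_d]$ is immediate from the definition of $S_H$. For the reverse inclusion, I would use the standard fact that every $S$-set is a disjoint union of $S$-classes: writing $\alpha \in S \cap \Q[Z_d]$ as $\alpha = \sum_i c_i \overline{C_i}$ in the $S$-class basis, any $C_i$ with $c_i \neq 0$ must be supported in $H$. Because $H$ is an $S$-subgroup, $H$ itself is a union of $S$-classes, so each $S$-class is either contained in $H$ or disjoint from $H$; the linear independence of the $\overline{C_i}$'s in $\Q[Z_{p^n}]$ then forces $C_i \subseteq H$ whenever $c_i \neq 0$. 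Hence $\alpha \in S_H$, giving $S \cap \Q[Z_d] = S_H$.

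Combining the two steps yields
\[\omega(S) \cap \Q(\zeta_d) = \omega(S \cap \Q[Z_d]) = \omega(S_H),\]
which is the desired identity. The only nontrivial ingredient is \thmref{cor:Claim4}; the remaining observations are essentially bookkeeping about $S$-sets versus $S$-classes, so I do not anticipate a serious obstacle in carrying this out.
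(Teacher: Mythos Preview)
Your proof is correct and follows exactly the paper's approach: apply \thmref{cor:Claim4} with $S_1=S$ and $S_2=\Q[H]$, use $\omega(\Q[H])=\Q(\zeta_d)$, and identify $S\cap\Q[H]$ with $S_H$. The paper simply writes this as a one-line chain of equalities, whereas you have spelled out the justification for $S\cap\Q[H]=S_H$ in more detail.
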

\begin{proof}
By \thmref{cor:Claim4}, $\omega(S) \cap \Q(\zeta_{d}) = \omega(S) \cap \omega(\QH) = \omega(S\cap \QH) = \omega(S_H)$.
\end{proof}

\begin{Thm}\label{Thm:Claim7} Let $G = Z_{p^n}$ and let $S$ be a Schur ring over $G$ such that $\omega(S) = \Q$. Then there exists a subgroup $H\le G$ and a Schur ring $T$ over $G/H$ such that $S = \Q[H]^0\wr T$.\end{Thm}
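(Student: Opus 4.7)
The plan is to invoke the Leung--Man classification \corref{thm:pnLeungMan}, which presents $S$ as trivial, an orbit ring, or a wedge product, and dispatch each case using the hypothesis $\omega(S)=\Q$. The trivial case $S=\Q[G]^{0}$ is immediate: take $H=G$ and $T=\Q$ over $G/G$, noting that $\pi^{-1}(T)=\Q\overline{G}\subseteq\Q[G]^0$ so the wreath product collapses to $\Q[G]^0$ itself. For the orbit case $S=\Q[G]^{\H}$, the equality $\Q=\omega(S)=\K_{p^n}^{\H}$ together with the Galois correspondence forces $\H=\G$, whence $S=\rr(\QG)$; \propref{prop:rationalAll} then realizes $S$ as $\Q[Z_p]^0\wr\rr(\Q[Z_{p^{n-1}}])$, finishing the argument with $H=Z_p$.

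The substantive case is when $S$ is a wedge product. I would invoke \corref{cor:possibleNucleus} to select a wedge decomposition $1<K\le H<G$ whose nucleus $S_H$ is either trivial or a wedge-indecomposable orbit ring over $H$. By \corref{cor:Chuck2}, the hypothesis descends to $\omega(S_H)=\Q$. The key step---and the main obstacle---is to show that $S_H$ must actually be the trivial Schur ring $\Q[H]^0$. Suppose instead $S_H=\Q[H]^{\H'}$ is a wedge-indecomposable orbit ring; then $\omega(S_H)=\Q$ forces $\H'=\G_{|H|}$ (again by Galois correspondence), and hence $S_H=\rr(\Q[H])$. But \propref{prop:rationalAll} exhibits $\rr(\Q[H])$ as a wreath product of two or more factors whenever $|H|>p$, contradicting the wedge-indecomposability of $S_H$. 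Thus $|H|=p$, and since $\rr(\Q[Z_p])=\Q[Z_p]^0$ is already trivial, in every scenario $S_H=\Q[H]^0$.

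Once $S_H=\Q[H]^0$ is in hand, the remaining subgroup $K$ of the wedge decomposition is an $S$-subgroup of $H$ with $K>1$; the only non-trivial $S$-subgroup inside the trivial Schur ring $\Q[H]^0$ is $H$ itself, so $K=H$ is forced. By the definition of the wedge product recorded in \secref{sec:Schur}, the case $K=H$ collapses the wedge to a wreath product, yielding $S=S_H\wedge T=\Q[H]^0\wr T$ as required. The entire argument is driven by the reduction $S_H=\Q[H]^0$; everything afterwards is a direct appeal to the definitions and earlier corollaries.
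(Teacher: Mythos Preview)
Your argument is correct and follows essentially the same route as the paper: split into the trivial, orbit, and wedge cases via \corref{thm:pnLeungMan}; in the orbit case identify $S=\rr(\QG)$ and apply \propref{prop:rationalAll}; in the wedge case use \corref{cor:possibleNucleus} to make $S_H$ trivial or a wedge-indecomposable orbit ring, observe $\omega(S_H)=\Q$, and then force $S_H=\Q[H]^0$ and $K=H$. The only cosmetic differences are that the paper cites \eqref{prop:NucleusDetermines} rather than \corref{cor:Chuck2} to obtain $\omega(S_H)=\Q$, and compresses your indecomposability argument for $\rr(\Q[H])$ into a single clause.
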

\begin{proof}
By \corref{thm:pnLeungMan}, $S$ is trivial, an orbit ring, or wedge-decomposable. If $S$ is trivial, then we are done. In the second case, by \propref{cor:distinctRationals}, $\rr(\QG)$ is the unique Schur ring which maps onto $\Q$, which has the desired form by \propref{prop:rationalAll}. Finally, suppose there exists $S$-subgroups $1 < K \le H < G$ such that $S = S_H \wedge S_{G/K}$ and $S_H$ is trivial or a wedge-indecomposable orbit Schur ring.  But $\omega(S_H) = \omega(S) = \Q$. Since the only indecomposable orbit ring which maps onto $\Q$ is $S_H = \Q[Z_p]^0$, again by \propref{prop:rationalAll}, in either case $S_H = \QH^0$. Since $K$ is a non-trivial $S_H$-subgroup, it must be that $K = H$. Therefore, $S = S_H\wr S_{G/H} = \QH^0\wr S_{G/H}$.
\end{proof}

\begin{Thm}\label{thm:Claim1} Let $S$ be a Schur ring over $Z_{p^n}$  and $\omega(S) \in\L_{p^{n-1}}\setminus \{\Q\}$. Then $S$ is wedge-decomposable. \end{Thm}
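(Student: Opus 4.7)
The plan is to reduce to the orbit-ring case via \corref{thm:pnLeungMan} and then exhibit an explicit wedge decomposition. That corollary offers three possibilities for $S$: trivial, orbit ring, or wedge-decomposable. The trivial case is immediately ruled out, since $\omega(\QG^0) = \Q$ by \eqref{prop:CycTriv}, contradicting the hypothesis $\omega(S) \neq \Q$; and the wedge-decomposable case is exactly what is to be shown. So the substantive step is to treat the case $S = \QG^\H$ for some $\H \le \G_{p^n}$.

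Under this hypothesis, $\omega(S) = \K_{p^n}^\H \subseteq \K_{p^{n-1}}$ is equivalent by Galois correspondence to $\H \supseteq \H_0$, where $\H_0 = \{\sigma_{1+kp^{n-1}} : 0 \le k \le p-1\}$ is the order-$p$ subgroup of $\G_{p^n}$ fixing $\K_{p^{n-1}}$ pointwise. The decisive step is to describe the action of $\H_0$ on $G = Z_{p^n}$. For any $z^{jp} \in Z_{p^{n-1}}$, $\sigma_{1+kp^{n-1}}(z^{jp}) = z^{jp+jkp^n} = z^{jp}$, so $\H_0$ acts trivially on $Z_{p^{n-1}}$. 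For $z^i \in L_{p^n}$ (i.e., $\gcd(i,p) = 1$), the $\H_0$-orbit is $\{z^{i+ikp^{n-1}} : 0 \le k \le p-1\}$, which equals the coset $z^i Z_p$ (where $Z_p = \langle z^{p^{n-1}}\rangle$), since $ik$ ranges over all residues modulo $p$ as $k$ does. Consequently, every $\H$-orbit contained in $L_{p^n}$ is a disjoint union of cosets of $Z_p$.

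Finally, set $K = Z_p$ and $H = Z_{p^{n-1}}$, which satisfy $1 < K \le H < G$ and are both $S$-subgroups (being unions of layers $L_{p^k}$, each of which is an automorphism class and therefore a union of $\H$-orbits). Letting $\tilde\H$ denote the induced action of $\H$ on $G/K$ (well-defined because $K$ is characteristic in $G$) and setting $T = \Q[G/K]^{\tilde\H}$, the orbit computation above establishes a bijection via $\pi$ between $\H$-orbits in $G \setminus H$ and $\tilde\H$-orbits in $(G/K) \setminus (H/K)$. One then readily verifies $\pi(S_H) = T_{H/K}$ and that the partition $\D(S)$ equals $\D(S_H) \cup (\D(\pi^{-1}(T)) \setminus \D(\pi^{-1}(T_{H/K})))$, producing the nontrivial wedge decomposition $S = S_H \wedge_K T$. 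The main obstacle is the orbit computation: this is the bridge from the field-theoretic hypothesis $\omega(S) \in \L_{p^{n-1}}$ to the combinatorial statement that $S$-classes outside $H$ are unions of $K$-cosets, after which verifying the wedge-product axioms is routine.
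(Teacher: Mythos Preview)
Your argument is correct, but it takes a genuinely different route from the paper's. The paper does \emph{not} case-split via \corref{thm:pnLeungMan}; instead it works with an arbitrary $S$ directly. It chooses $H\lneq G$ minimal with $\omega(S)\subseteq\omega(\QH)$, applies \thmref{cor:Claim4} to get $\omega(S)=\omega(S_H)$, and then uses a Hadamard-product trick: for any $S$-class $C\not\subseteq H$ one finds $\alpha\in S_H$ with $\omega(\overline{C})=\omega(\alpha)$, so $\overline{C}-\alpha\in\ker(\omega|_S)$, whence $\overline{C}=(\overline{C}-\alpha)\circ(\overline{G}-\overline{H})\in\ker(\omega|_S)$; \lemref{lem:coset0} then forces $C$ to be a union of cosets, and Wielandt's stabilizer result supplies the common $S$-subgroup $K$. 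Your approach instead invokes Leung--Man up front to reduce to the orbit case, then replaces the kernel/Hadamard machinery with a direct Galois computation: $\H\supseteq\H_0=\mathrm{Gal}(\K_{p^n}/\K_{p^{n-1}})$, and the $\H_0$-orbits on $L_{p^n}$ are visibly $Z_p$-cosets. Your route is more elementary in the orbit case (no Wielandt, no $\circ$-trick) and yields the explicit decomposition with $K=Z_p$, $H=Z_{p^{n-1}}$; the paper's route is more uniform across the three Leung--Man types, pins down a potentially smaller $H$, and showcases the Hadamard-closure of $\ker\omega$ as a reusable technique.
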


\begin{proof}
Let $G=Z_{p^n}$. Since $\omega(S)$ is not in the top layer of $\L_{p^n}$, there exists some subgroup $H\lneq G$ such that $\omega(S) \subseteq \omega(\QH)$ and $H$ is chosen minimally. Then \[\omega(S) = \omega(S) \cap \omega(\QH) = \omega(S\cap \QH) = \omega(S_H).\] By the minimality of $H$, $H$ must be an $S$-subgroup. Since $\omega(S_H)\neq \Q$, $H$ must be a nontrivial subgroup of $G$. In particular, $\overline{G}-\overline{H}\in \ker(\omega|_S)$.

Since $\ker(\omega)$ and $S$ are closed under the Hadamard product, $\ker(\omega|_S)=\ker(\omega)\cap S$ is likewise closed under $\circ$. Let $C\in \D(S)\setminus \D(S_H)$. Since $\omega(S) = \omega(S_H)$, there exists some $\alpha \in S_H$ such that $\omega(\overline{C}) = \omega(\alpha)$. Thus, $\overline{C}-\alpha\in \ker(\omega|_S)$. Therefore, $\overline{C} = (\overline{C}-\alpha)\circ (\overline{G}-\overline{H}) \in \ker(\omega|_S)$. Since $\overline{C}$ is a simple quantity, \lemref{lem:coset0} implies that $C$ is a union of cosets of some subgroup. This subgroup is exactly the maximal subset of $G$ which stabilizes $C$. A result due to Wielandt \cite{Wielandt64} states that these stabilizers are $S$-subgroups. Taking intersections if necessary, every class $C\in \D(S)\setminus\D(S_H)$ is a union of cosets of some nontrivial $S$-subgroup $K$. Therefore, $S$ is wedge-decomposable.
\end{proof}

\begin{Thm}\label{thm:topLayerRational} Let $S$ be a Schur ring over $Z_{p^n}$ such that $\omega(S)$ is in the top layer of $\L_{p^n}$. Then $S$ is an orbit Schur ring, and hence $S$ is the unique Schur ring over $Z_{p^n}$ which maps to $\omega(S)$.
\end{Thm}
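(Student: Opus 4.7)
The plan is to use \corref{thm:pnLeungMan} to split $S$ into the three possible cases (trivial, orbit Schur ring, or wedge product) and eliminate the first and third, then invoke the lattice correspondence of \propref{cor:distinctRationals} to upgrade ``orbit'' to ``unique orbit.''

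First I would dispose of the trivial case. By equation \eqref{prop:CycTriv}, the trivial Schur ring $\Q[Z_{p^n}]^0$ maps to $\Q$, which belongs to the 0th layer of $\L_{p^n}$; since the top layer of $\L_{p^n}$ is disjoint from the lower layers (the layers partition $\L_{p^n}$), this contradicts the hypothesis (assuming $n\ge 1$, which is the only nontrivial case).

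Next I would rule out the possibility that $S$ is wedge-decomposable. Assuming the contrary, \corref{cor:possibleNucleus} gives a wedge-decomposition $1 < K \le H < G$ with $S_H$ either trivial or a wedge-indecomposable orbit Schur ring over $H$. Setting $m = |H|$, the strict inequality $H < G$ forces $m$ to be a proper divisor of $p^n$, hence $m \le p^{n-1}$. By \eqref{prop:NucleusDetermines}, $\omega_n(S) = \omega_m(S_H) \subseteq \K_m \subseteq \K_{p^{n-1}}$, so $\omega_n(S) \in \L_{p^{n-1}}$, contradicting the assumption that $\omega_n(S)$ is in the top layer.

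Having eliminated both alternatives, \corref{thm:pnLeungMan} forces $S$ to be an orbit Schur ring. For uniqueness, I would observe that if $T$ is any other Schur ring over $Z_{p^n}$ with $\omega_n(T) = \omega_n(S)$, then $\omega_n(T)$ also lies in the top layer, and the same dichotomy applies to $T$: it must likewise be an orbit Schur ring. Then \propref{cor:distinctRationals} asserts that $\omega_n$ is a lattice isomorphism between the orbit Schur rings and the subfields of $\K_{p^n}$, so $T = S$.

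The main obstacle is the wedge-decomposable step — one needs the reduction in \corref{cor:possibleNucleus} so that $\omega_n(S)$ can be read off from $S_H$ alone via \eqref{prop:NucleusDetermines}; without that reduction the image of $S$ could a priori require information from classes outside $H$, and the containment $\omega_n(S) \subseteq \K_m$ would not be immediate.
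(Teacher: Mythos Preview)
Your argument is correct and follows essentially the same route as the paper: rule out the trivial case via \eqref{prop:CycTriv}, rule out wedge-decomposable via \eqref{prop:NucleusDetermines}, then appeal to \corref{thm:pnLeungMan} and \propref{cor:distinctRationals}. One small remark: the detour through \corref{cor:possibleNucleus} is unnecessary, since \eqref{prop:NucleusDetermines} already holds for \emph{any} wedge decomposition $1<K\le H<G$ (the classes outside $H$ are unions of $K$-cosets and hence die under $\omega_n$), so the containment $\omega_n(S)\subseteq\K_m\subseteq\K_{p^{n-1}}$ is immediate without first reducing to an indecomposable $S_H$.
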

\begin{proof}
By \eqref{prop:CycTriv} and \eqref{prop:NucleusDetermines} if $S$ is trivial or wedge-decomposable then  $\omega(S)$ is not in the top layer. Thus, $S$ is an orbit Schur ring. By \propref{cor:distinctRationals}, $S$ is the unique orbit Schur ring mapping onto $\omega(S)$.
\end{proof}

\section{Counting Schur Rings over Cyclic $p$-Groups}\label{chap:Counting}

Using the representation $\omega_n : \Q[Z_n] \to \Q(\zeta_n)$ which was considered in the previous chapter, we will construct a recursive formula and generating function for the integer sequence counting the number of Schur rings over $Z_{p^n}$, for $p$ a odd prime. 

\begin{Def}\label{def:Omega} Let $\Omega(n)$ denote the number of Schur rings over $Z_{p^n}$ and let $\Omega(n,k)$ denote the number of Schur rings $S$ over $Z_{p^n}$ such that $\omega(S) = \K_{p^k}$. 
\end{Def} 

We have that $\Omega(0) = 1$ since there is exactly one Schur ring over $Z_{p^0} = 1$, the group ring itself. Also, if $x$ denotes the number of divisors of $p-1$, then $\Omega(1) = x$ by \corref{thm:primeCycStructure}.

\begin{Prop}\label{prop:formulaZeroLayer} The number of Schur rings  over $Z_{p^n}$, for $n \ge 1$, mapping onto $\Q$ with respect to $\omega$ is equal to the sum of the number of Schur rings over $Z_{p^k}$ for $0\le k\le n-1$, that is, 
\begin{equation}\label{eq:formulaZeroLayer}\Omega(n,0) = \sum_{k=0}^{n-1} \Omega(k).\end{equation}   
\end{Prop}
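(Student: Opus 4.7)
The plan is to set up a bijection between Schur rings $S$ over $Z_{p^n}$ with $\omega(S)=\Q$ and pairs $(j, T)$ where $1 \le j \le n$ and $T$ is an arbitrary Schur ring over $Z_{p^{n-j}}$. Once the bijection is in place, the count is immediate:
\[
\Omega(n,0) \;=\; \sum_{j=1}^{n} \Omega(n-j) \;=\; \sum_{k=0}^{n-1} \Omega(k).
\]

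First I would invoke \thmref{Thm:Claim7} to write every $S$ with $\omega(S)=\Q$ as $S = \Q[H]^0 \wr T$ for some subgroup $H \le G = Z_{p^n}$ and Schur ring $T$ over $G/H$. Inspecting the proof of that theorem, $H$ is never the trivial subgroup: in the trivial case $H=G$, in the rational case $H=Z_p$ (using \propref{prop:rationalAll}), and in the wedge-decomposable case $H$ is the nontrivial $S$-subgroup chosen per \corref{cor:possibleNucleus}. Writing $|H|=p^j$, this gives the forward map $S \mapsto (j, S_{G/H})$ with $1 \le j \le n$. In the other direction, for any $1 \le j \le n$ and any Schur ring $T$ over $Z_{p^{n-j}}$, the wreath product $\Q[Z_{p^j}]^0 \wr T$ is a Schur ring over $Z_{p^n}$, and by \eqref{prop:NucleusDetermines} together with \eqref{prop:CycTriv} its image under $\omega_n$ equals $\omega_{p^j}(\Q[Z_{p^j}]^0) = \Q$. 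So the inverse map is defined on all pairs.

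The step I expect to require the most care is uniqueness, i.e.\ checking that the pair $(j,T)$ is recovered from $S$. The key observation is that inside $\Q[Z_{p^j}]^0 \wr T$, the $S$-classes contained in $Z_{p^j}$ are exactly $\{1\}$ and $Z_{p^j}\setminus\{1\}$, so the only $S$-subgroups contained in $Z_{p^j}$ are $1$ and $Z_{p^j}$ itself. Meanwhile, $H = Z_{p^j}$ is itself an $S$-subgroup because $\overline{H} = 1 + \overline{H\setminus\{1\}} \in S$. Hence $j$ is precisely the smallest positive integer for which $Z_{p^j}$ is an $S$-subgroup, and this uniquely recovers $H$. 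Once $H$ is known, $T = S_{G/H}$ is forced, so the map is injective.

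Finally I would assemble the count. For each fixed $j \in \{1,\ldots,n\}$, the number of Schur rings $T$ over $G/H \cong Z_{p^{n-j}}$ is $\Omega(n-j)$, with the convention $\Omega(0)=1$ covering the case $j=n$ (where $S$ is the trivial Schur ring $\Q[G]^0$). Summing over $j$ and reindexing $k=n-j$ yields \eqref{eq:formulaZeroLayer}. No delicate calculations are needed beyond the bookkeeping in the uniqueness step.
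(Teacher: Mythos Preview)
Your proposal is correct and follows essentially the same approach as the paper: invoke \thmref{Thm:Claim7} to write each $S$ with $\omega(S)=\Q$ as $\Q[Z_{p^j}]^0\wr T$ with $1\le j\le n$, note that every such wreath product maps to $\Q$, and sum over $j$. Your explicit uniqueness argument (recovering $j$ as the index of the smallest nontrivial $S$-subgroup) fills in a step the paper leaves implicit, so your version is if anything more complete.
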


\begin{proof}
Let $G = Z_{p^n}$. By \thmref{Thm:Claim7}, if $\omega(S) = \Q$ then $S = \Q[Z_{p^k}]^0 \wr T$ for some Schur ring $T$ over $G/Z_{p^k}$. If we consider the trivial Schur ring on $G$ as a trivial wreath product, that is, $\QG^0 = \QG^0\wr \Q[1]$, then every Schur ring descending to $\Q$ has the form \[S = \Q[Z_{p^k}]^0\wr T,\] where $1\le k\le n$ and $T$ ranges over all the Schur rings of $G/Z_{p^k} \cong Z_{p^{n-k}}$.  Since every Schur ring over $G$ of this form maps to $\Q$, the proof is finished.
\end{proof}

\begin{Prop}\label{prop:formulaFirstLayer} The number of Schur rings over $Z_{p^n}$ mapping to $\K_p$ with respect to $\omega$ is equal to the number of Schur rings over $Z_{p^{n-1}}$, that is, 
\begin{equation}\label{eq:formulaFirstLayer}\Omega(n,1) = \Omega(n-1).\end{equation} 
\end{Prop}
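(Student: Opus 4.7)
The plan is to construct a bijection between Schur rings $T$ over $Z_{p^{n-1}}$ and Schur rings $S$ over $G := Z_{p^n}$ satisfying $\omega_n(S) = \K_p$, realized by $T \mapsto \Q[Z_p] \wr T$. Well-definedness and injectivity of this map are immediate from material already developed: for any Schur ring $T$ over $Z_{p^{n-1}} \cong G/Z_p$, the wreath product $\Q[Z_p] \wr T$ is a Schur ring over $G$, and by \eqref{prop:NucleusDetermines},
\[\omega_n(\Q[Z_p]\wr T) = \omega_p(\Q[Z_p]) = \K_p,\]
while injectivity follows from the identity $\pi(S\wr T) = T$ noted in \secref{sec:Schur}.

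The substantive content is surjectivity: every Schur ring $S$ over $Z_{p^n}$ with $\omega(S) = \K_p$ arises this way. The case $n=1$ is handled directly by \corref{thm:primeCycStructure} together with \propref{cor:distinctRationals}, which identify the unique Schur ring over $Z_p$ mapping to $\K_p$ as $\Q[Z_p]$ itself, matching $\Omega(0) = 1$. For $n \ge 2$, since $p$ is odd we have $\K_p \in \L_{p^{n-1}} \setminus \{\Q\}$, so \thmref{thm:Claim1} yields that $S$ is wedge-decomposable. Using \corref{cor:possibleNucleus}, select a wedge-decomposition $1 < K \le H < G$ with $S_H$ wedge-indecomposable; by \eqref{prop:NucleusDetermines}, $\omega(S_H) = \omega(S) = \K_p \neq \Q$, so $S_H$ is not trivial and must therefore be a wedge-indecomposable orbit Schur ring.

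The main obstacle is pinning down $H = Z_p$ in this decomposition. Write $H = Z_{p^k}$. If $k \ge 2$, then viewing $S_H$ as a Schur ring over $Z_{p^k}$, its image $\omega_{p^k}(S_H) = \K_p$ lies in $\L_{p^{k-1}} \setminus \{\Q\}$, so \thmref{thm:Claim1} applies again and forces $S_H$ itself to be wedge-decomposable, contradicting the choice. Hence $k = 1$, and since $Z_p$ admits a unique orbit Schur ring mapping to $\K_p$ (again by \propref{cor:distinctRationals}), one obtains $S_H = \Q[Z_p]$. Because $Z_p$ has no nontrivial proper subgroups, the constraint $1 < K \le H = Z_p$ forces $K = Z_p$, so the wedge product degenerates to a wreath product and $S = \Q[Z_p] \wr S_{G/Z_p}$. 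This exhibits $S$ in the required form with $T = S_{G/Z_p}$, closes the bijection, and yields $\Omega(n,1) = \Omega(n-1)$.
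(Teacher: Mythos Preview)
Your proof is correct and follows essentially the same approach as the paper's: both argue that any such $S$ must be wedge-decomposable (via \thmref{thm:Claim1}), use \corref{cor:possibleNucleus} and \eqref{prop:NucleusDetermines} to force $S_H$ to be a wedge-indecomposable orbit ring, and then apply \thmref{thm:Claim1} again to rule out $|H|>p$, yielding $S=\Q[Z_p]\wr T$. Your version is slightly more explicit in framing the correspondence as a bijection and verifying injectivity via $\pi(S\wr T)=T$, whereas the paper leaves this implicit.
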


\begin{proof}
Let $G = Z_{p^n}$. If $n = 1$, then $\Omega(n-1) = \Omega(0) = 1$. By \corref{thm:primeCycStructure}, there is only one Schur ring which maps to $\K_p$. So the result follows.

Suppose that $n\ge 2$.  Let $S$ be the orbit Schur ring over $G = Z_{p^n}$ which maps onto $\K_p$. By \thmref{thm:Claim1}, $S$ is wedge-decomposable.  By \corref{cor:possibleNucleus}, there is a wedge decomposition of $S$, $1< K \le H < Z_{p^n}$, such that $S_H$ is trivial or an indecomposable orbit Schur ring. By \propref{prop:NucleusDetermines}, $\omega(S) = \omega(S_H)$. If $S_H$ is trivial, then $\omega(S_H) = \Q$, by \propref{prop:CycTriv}. Thus, $S_H$ is an indecomposable orbit Schur ring. Now, if $Z_p \neq H$, then $S_H$ is wedge-decomposable by \thmref{thm:Claim1}. Therefore, $H=Z_p$, which forces $K = H$. In fact, $S_H = \Q[Z_p]$. This shows that $S = \Q[Z_p]\wr T$, where $T$ is some Schur ring over $G/Z_p$. Since every Schur ring over $G$ of this form maps to $\K_p$, the proof is finished.
\end{proof}

\begin{Prop}\label{prop:formulaTopLayer} The number of Schur rings over $Z_{p^n}$ mapping to $\K_{p^n}$ with respect to $\omega$ is one, that is,
\begin{equation}\label{eq:formulaTopLayer}\Omega(n,n) = 1.\end{equation}
\end{Prop}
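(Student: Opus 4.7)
The plan is to deduce this from \thmref{thm:topLayerRational} together with \propref{cor:distinctRationals}; these already do essentially all the work. First I would verify that the full cyclotomic field $\K_{p^n}$ itself lies in the top layer of $\L_{p^n}$. By definition, the $n$th (top) layer of $\L_{p^n}$ is $\L_{p^n} \setminus \L_{p^{n-1}}$, and since $[\K_{p^n}:\Q] = p^{n-1}(p-1) > p^{n-2}(p-1) = [\K_{p^{n-1}}:\Q]$ for $n \ge 1$, the field $\K_{p^n}$ cannot be contained in $\K_{p^{n-1}}$, and so $\K_{p^n}$ belongs to the top layer. (The case $n=0$ is trivial, since the only Schur ring over $Z_{p^0} = 1$ is the ground field.)

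Once $\K_{p^n}$ is known to lie in the top layer, \thmref{thm:topLayerRational} applies directly with $\omega(S) = \K_{p^n}$: any such $S$ must be an orbit Schur ring, and moreover it is the unique Schur ring over $Z_{p^n}$ mapping onto $\K_{p^n}$. By \propref{cor:distinctRationals}, that unique orbit Schur ring is the one indexed by the trivial subgroup $\{1\} \le \G_{p^n}$, whose fixed field is all of $\K_{p^n}$; explicitly, this is the full group algebra $\Q[Z_{p^n}]^{\{1\}} = \Q[Z_{p^n}]$ itself, which surjects onto $\K_{p^n}$ because $\omega(z) = \zeta_{p^n}$ is a generator. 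Hence $\Omega(n,n) = 1$.

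There is no serious obstacle here: the main technical content was built into \thmref{thm:topLayerRational} via the combined uses of \eqref{prop:CycTriv} and \eqref{prop:NucleusDetermines} (which rule out trivial and wedge-decomposable Schur rings from landing in the top layer). The only point that requires a sentence of justification is the observation that $\K_{p^n}$ really is a top-layer field, which is a degree comparison.
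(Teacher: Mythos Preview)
Your proof is correct and follows essentially the same approach as the paper: the paper's proof is the single sentence ``Since $\K_{p^n}$ is a field in the top layer, this formula follows immediately from \thmref{thm:topLayerRational}.'' You add the explicit degree comparison showing $\K_{p^n}\notin\L_{p^{n-1}}$ and the identification of the unique Schur ring as $\Q[Z_{p^n}]$ itself (which also handles existence), but these are elaborations rather than a different strategy.
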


\begin{proof}
Since $\K_{p^n}$ is a field in the top layer, this formula follows immediately from \thmref{thm:topLayerRational}.
\end{proof}

\begin{Prop}\label{prop:formulaLowerLayers} For $n \ge 2$, the number of Schur rings over $Z_{p^n}$ mapping to $\K_{p^k}$ for $1 < k \le n$ with respect to $\omega$ is equal to the sum of the number of Schur rings over $Z_{p^{n-1}}$ mapping onto $\K_{p^j}$ where $j$ ranges between $k-1$ and $n-1$, that is, 
\begin{equation}\label{eq:formulaLowerLayers}\Omega(n,k) = \sum_{j=k-1}^{n-1} \Omega(n-1, j).\end{equation}
\end{Prop}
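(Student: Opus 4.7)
The plan is to set up a bijection between $\mathcal{A}_k := \{S \text{ over } Z_{p^n} : \omega(S) = \K_{p^k}\}$ and $\bigsqcup_{j=k-1}^{n-1}\mathcal{A}_j'$, where $\mathcal{A}_j'$ denotes the Schur rings $T$ over $Z_{p^{n-1}}$ with $\omega(T) = \K_{p^j}$, via $\phi: S \mapsto \pi(S)$ for $\pi:Z_{p^n}\to Z_{p^n}/Z_p = Z_{p^{n-1}}$ the canonical quotient by the unique order-$p$ subgroup. The structural fact driving everything is that $S_{Z_{p^k}} = \Q[Z_{p^k}]$ for any $S \in \mathcal{A}_k$: when $k = n$, \thmref{thm:topLayerRational} and \propref{cor:distinctRationals} force $S = \Q[Z_{p^n}]$ itself; for $1 < k < n$, \thmref{thm:Claim1} combined with \corref{cor:possibleNucleus} gives a wedge decomposition whose minimal nucleus must be $H = Z_{p^k}$ with $S_H$ the unique wedge-indecomposable orbit ring over $Z_{p^k}$ mapping to $\K_{p^k}$, namely $\Q[Z_{p^k}]$. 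In particular $Z_p$ is an $S$-subgroup, so $\pi(S)$ is a Schur ring over $Z_{p^{n-1}}$.

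To see that $\pi(S) \in \mathcal{A}_j'$ for some $j \in \{k-1,\ldots,n-1\}$, I would note that the composition $\Q[Z_{p^n}] \xrightarrow{\pi} \Q[Z_{p^{n-1}}] \xrightarrow{\omega} \K_{p^{n-1}}$ is the $\Q$-algebra map $z\mapsto \zeta_{p^{n-1}}$, and so applied to $\Q[Z_{p^k}] = \Q[\langle z^{p^{n-k}}\rangle]$ it gives $\Q(\zeta_{p^{n-1}}^{p^{n-k}}) = \Q(\zeta_{p^{k-1}}) = \K_{p^{k-1}}$. Thus $\omega(\pi(S)) \supseteq \K_{p^{k-1}}$. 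For odd $p$, \lemref{prop:AutGroups} gives $\G_{p^{n-1}}$ cyclic, so the subgroup fixing $\K_{p^{k-1}}$ is cyclic of $p$-power order $p^{n-k}$; Galois correspondence then forces the subfields of $\K_{p^{n-1}}$ containing $\K_{p^{k-1}}$ to be exactly the chain $\K_{p^{k-1}}\subset\K_{p^k}\subset\cdots\subset\K_{p^{n-1}}$, so $\omega(\pi(S)) = \K_{p^j}$ for some such $j$.

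For surjectivity, given $T \in \mathcal{A}_j'$ with $j \ge k-1$, the same structural analysis over $Z_{p^{n-1}}$ yields $T_{Z_{p^j}} = \Q[Z_{p^j}]$, hence $T_{Z_{p^{k-1}}} = \Q[Z_{p^{k-1}}]$ since $Z_{p^{k-1}} \le Z_{p^j}$. The equality $\pi(\Q[Z_{p^k}]) = \Q[Z_{p^{k-1}}] = T_{Z_{p^{k-1}}}$ lets me form the wedge product $S := \Q[Z_{p^k}]\wedge_{Z_p} T$. By \lemref{lem:coset0} $\omega(\overline{Z_p}) = 0$, so $\omega$ annihilates every class coming from the $\pi^{-1}(T)$ part, leaving $\omega(S) = \omega(\Q[Z_{p^k}]) = \K_{p^k}$; and $\pi(S) = T$ is immediate from the construction.

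The main obstacle is injectivity --- showing $S = \Q[Z_{p^k}]\wedge_{Z_p} \pi(S)$ --- which reduces to proving every $S$-class $D\subseteq Z_{p^n}\setminus Z_{p^k}$ equals the fiber $\pi^{-1}(\pi(D))$. That $D$ is a union of $Z_p$-cosets follows from the wedge decomposition $S = \Q[Z_{p^k}]\wedge_K T'$ provided by \corref{cor:possibleNucleus}: any nontrivial $K \le Z_{p^k}$ must contain $Z_p$ (the unique smallest subgroup of the cyclic $p$-group $Z_{p^k}$), so every class outside the nucleus is a union of $K$-cosets and hence of $Z_p$-cosets. Given this, $\pi(D)$ is forced to be a single $\pi(S)$-primitive class (any strictly smaller $\pi(S)$-piece would pull back to a proper $S$-subclass of $D$), and comparing class partitions on both sides identifies $S$ with $\Q[Z_{p^k}]\wedge_{Z_p}\pi(S)$.
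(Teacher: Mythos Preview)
Your argument is correct and follows essentially the same route as the paper: you establish $S_{Z_{p^k}}=\Q[Z_{p^k}]$ via \thmref{thm:Claim1} and \corref{cor:possibleNucleus}, reduce the wedge kernel to $Z_p$, and characterize the admissible quotient Schur rings $T$ by the condition $\omega(T)\supseteq\K_{p^{k-1}}$. The only real difference is presentational---you package the count as an explicit bijection $S\mapsto\pi(S)$ and argue injectivity directly from the coset structure of the $S$-classes, whereas the paper leaves the bijection implicit in the phrase ``every Schur ring of this type can be wedged to $\QH$''; and where the paper invokes \corref{cor:Chuck2} and \eqref{cor:subfieldPeriod} to pin down $\omega(T)$, you compute $\omega\circ\pi$ on $\Q[Z_{p^k}]$ and then appeal to the cyclic Galois group directly.
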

\begin{proof}
Let $G = Z_{p^n}$. If $k = n$, then $\Omega(n,n) = 1 = \Omega(n-1,n-1)$, by \eqref{eq:formulaTopLayer}.  If $1 < k < n$, then each Schur ring mapping onto $\K_{p^n}$ is wedge-decomposable, by \thmref{thm:Claim1}. In particular, if $S$ is a Schur ring over $Z_{p^n}$ such that $\omega(S) = \K_{p^k}$, then there exists a wedge-decomposition such that $1 < K \le H= Z_{p^k} < G$ and $S_H = \Q[H]$. Put another way, $S = \Q[H] \wedge T$, where $T$ is a Schur ring over $G/K$. Clearly, $\overline{K} \in \Q[H]$ for any choice of $K$. If $\pi : G \to G/K$ is the quotient map, then $\pi(\Q[H]) = \Q[H/K]$. Therefore, the wedge product $\Q[H]\wedge T$ is possible if and only if $H/K$ is a $T$-subgroup and $T_{H/K} = \Q[H/K]$. Without the loss of generality, we may assume that $K= Z_p$, since any coset of $K$ is necessarily a coset of $Z_p$. If we identify $\pi$ with the map $\pi : Z_{p^n} \to Z_{p^{n-1}}$, then $\pi(H) = Z_{p^{k-1}}$ and we must determine which Schur rings $T$ have the property that $T_{H/K} = \Q[Z_{p^{k-1}}]$.  Now, $\omega(T_{H/K}) = \K_{p^{k-1}}$, but by \corref{cor:Chuck2} we have $\omega(T_{H/K}) = \omega(T)\cap \K_{p^{k-1}}$. Equation \eqref{cor:subfieldPeriod} then gives that $\omega(T) = \K_{p^j}$ for some $k-1 \le j \le n-1$. Since every Schur ring of this type can be wedged to $\QH$, the equality is proven.
\end{proof}

\begin{Prop}\label{prop:formulaSameLayer} Let $E, F \in \L_{p^k} \setminus \L_{p^{k-1}}$. Then the number of Schur rings over $Z_{p^n}$ which map onto $E$ with respect to $\omega$ is equal to the number of Schur rings over $Z_{p^n}$ which map onto $F$ with respect to $\omega$. In particular, the number of Schur rings mapping onto $E$ is equal to $\Omega(n,k)$.
\end{Prop}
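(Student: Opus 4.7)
The strategy is to induct on $n$, reducing $\Omega(n, E)$ to a count of Schur rings over $Z_{p^{n-1}}$ subject to a restriction condition that, after translation via $\omega$, turns out to be symmetric across the $k$-th layer. Throughout, fix $E \in \L_{p^k}\setminus\L_{p^{k-1}}$ with $k \ge 1$ (the $k = 0$ case being vacuous). Let $\H_E \le \G_{p^n}$ be the unique subgroup with $\K_{p^n}^{\H_E} = E$, and let $\bar\H_E$ denote its image in $\G_{p^{k-1}}$ under restriction.

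The reduction step: since $E$ lies in the top layer of $\L_{p^k}$, \thmref{thm:topLayerRational} applied over $Z_{p^k}$ makes $\Q[Z_{p^k}]^{\H_E}$ the unique Schur ring over $Z_{p^k}$ mapping to $E$. Combining this with \corref{cor:possibleNucleus} and \thmref{thm:Claim1}, the argument from the proofs of \propref{prop:formulaFirstLayer} and \propref{prop:formulaLowerLayers} carries over: every Schur ring $S$ over $Z_{p^n}$ with $\omega(S) = E$ decomposes uniquely as
\[
S = \Q[Z_{p^k}]^{\H_E} \wedge_{Z_p} T,
\]
where $T$ is a Schur ring over $Z_{p^{n-1}} \cong Z_{p^n}/Z_p$ satisfying $T_{Z_{p^{k-1}}} = \Q[Z_{p^{k-1}}]^{\bar\H_E}$. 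Because $\K_{p^{k-1}}^{\bar\H_E}$ lies in the top layer of $\L_{p^{k-1}}$ when $k \ge 2$ (and equals $\Q$ when $k = 1$), a second application of \thmref{thm:topLayerRational} combined with \corref{cor:Chuck2} rewrites this constraint purely at the field level as $\omega(T)\cap \K_{p^{k-1}} = \K_{p^{k-1}}^{\bar\H_E}$. Grouping $T$ by the layer of $\omega(T)$ in $\L_{p^{n-1}}$ yields
\[
\Omega(n, E) = \sum_{\ell=0}^{n-1} N_\ell(E)\cdot \Omega(n-1,\ell),
\]
where $N_\ell(E)$ is the number of subfields $U$ in the $\ell$-th layer of $\L_{p^{n-1}}$ with $U \cap \K_{p^{k-1}} = \K_{p^{k-1}}^{\bar\H_E}$, and $\Omega(n-1,\ell)$ denotes the common count across the $\ell$-th layer supplied by the inductive hypothesis.

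The remaining task is to show that $N_\ell(E)$ depends only on $\ell$ and $k$. When $k = 1$ the condition is vacuous, so $N_\ell(E) = |\L_{p^\ell}\setminus\L_{p^{\ell-1}}|$, manifestly independent of $E$. When $k \ge 2$, I would use the Galois correspondence together with the cyclicity of $\G_{p^{n-1}}$ (for $p$ odd); parameterizing each $\ell$-th layer by divisors $d$ of $p-1$ and computing subgroup orders under the restriction map $\G_{p^{n-1}} \rightarrow \G_{p^{k-1}}$, one checks that for $\ell \ge k-1$ there is exactly one $d$ (namely the divisor of $p-1$ associated to $E$) yielding an image subgroup equal to $\bar\H_E$, and for $\ell < k-1$ no $d$ does. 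I expect this index computation to be the principal technical obstacle, but the upshot is that $N_\ell(E)$ is independent of the chosen $E$, whence $\Omega(n, E) = \Omega(n, F)$ after summing, completing the induction.
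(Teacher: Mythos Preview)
Your approach is essentially the same as the paper's: both induct on $n$, decompose $S$ as a wedge product $\Q[Z_{p^k}]^{\H_E}\wedge_{Z_p} T$, translate the compatibility condition on $T$ into the field-theoretic constraint $\omega(T)\cap\K_{p^{k-1}} = E\cap\K_{p^{k-1}}$, and then use the grid structure of $\L_{p^{n-1}}$ (your $N_\ell(E)$ computation, the paper's appeal to \eqref{prop:sameDegree}) to see that exactly one field per layer $j\ge k-1$ satisfies this, reducing to the inductive hypothesis. The paper separates the base cases ($k=0,1$; $n=2$; and the top-layer case $k=n$, where the wedge decomposition is unavailable and \thmref{thm:topLayerRational} is invoked directly) a bit more explicitly than you do, but the substance is identical.
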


Remember that $\Q \in \L_{p^0}$ and is contained in the $0$th layer of $\L_{p^n}$, not the first layer $\L_{p} \setminus \L_1$.

\begin{proof}
Let $\Omega(n,E)$ be the number of Schur rings over $Z_{p^n}$  which map onto $E$. If $k =0$, then the only field in this layer is $\Q$. So, $E=\Q$. If $k= 1$, we can mimic the proof of \propref{prop:formulaFirstLayer} to get $\Omega(n,E) = \Omega(n-1) = \Omega(n,1)$. So, we may suppose that $k \ge 2$.

We will now induct on $n$. Let $n = 2$. Then the only $k$ to consider is $k=2$, which represents the top layer. Mimicking the proof of \propref{prop:formulaTopLayer}, we get $\Omega(n,E) = 1 = \Omega(n,k)$. Suppose now that the result holds for all integers less than $n$. Mimicking the the proof of \propref{prop:formulaLowerLayers} (using here also \eqref{prop:sameDegree}), we have
\[\Omega(n,E) = \sum_{j=k-1}^{n-1} \Omega(n-1, E\cap \K_{p^j}).\] By induction, $\Omega(n-1, E\cap \K_{p^j}) = \Omega(n-1, j)$ for each $j$, which proves $\Omega(n,E) = \Omega(n,k)$.
\end{proof}

\begin{Thm} The number of Schur rings over $Z_{p^n}$, where $p$ is an odd prime and $n \ge 2$, is given by the following  equation:
\begin{equation}\label{eq:formula1st} \Omega(n) = \Omega(n,0) + (x-1)\Omega(n,1) + x\sum_{k=2}^n \Omega(n,k),\end{equation} where $x$ denotes the number of divisors of $p-1$.
\end{Thm}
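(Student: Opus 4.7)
The plan is to partition the set of all Schur rings over $Z_{p^n}$ according to the image $\omega(S)$ in the lattice $\L_{p^n}$, and then use \propref{prop:formulaSameLayer} together with the known count of subfields in each layer of $\L_{p^n}$.

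First I would write
\[
\Omega(n) = \sum_{E \in \L_{p^n}} \Omega(n,E),
\]
where $\Omega(n,E)$ denotes the number of Schur rings $S$ over $Z_{p^n}$ with $\omega(S) = E$; this is valid because \propref{cor:distinctRationals} (together with the discussion of Section 4) guarantees that $\omega(S)$ is always a subfield of $\K_{p^n}$, and every Schur ring has a well-defined image under $\omega$. Next I would group this sum by the layer decomposition $\L_{p^n} = \L_0 \sqcup (\L_p \setminus \L_0) \sqcup \bigsqcup_{k=2}^n (\L_{p^k} \setminus \L_{p^{k-1}})$ described in \secref{sec:cycFields}.

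Now I would apply \propref{prop:formulaSameLayer}, which states that for $E$ in the $k$th layer the value $\Omega(n,E)$ depends only on $k$ and equals $\Omega(n,k)$. It remains to count the number of fields in each layer. The 0th layer $\L_0 = \{\Q\}$ contains exactly one field, contributing $\Omega(n,0)$. For the bottom layer, $|\L_p| = x$, so $\L_p \setminus \L_0$ contains $x - 1$ fields, each contributing $\Omega(n,1)$, for a total of $(x-1)\Omega(n,1)$. For each $k$ with $2 \le k \le n$, the $k$th layer contains exactly $x$ fields (by the layer-isomorphism discussion following \lemref{prop:AutGroups}), each contributing $\Omega(n,k)$, for a total of $x\,\Omega(n,k)$.

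Combining these gives
\[
\Omega(n) = \Omega(n,0) + (x-1)\Omega(n,1) + x\sum_{k=2}^n \Omega(n,k),
\]
as desired. There is no real obstacle: the structural work has already been done in \propref{prop:formulaSameLayer} (which handles the per-field count) and in \secref{sec:cycFields} (which handles the per-layer count); the theorem is just the bookkeeping that assembles these two ingredients.
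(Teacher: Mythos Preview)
Your proposal is correct and follows essentially the same approach as the paper: both arguments partition the Schur rings by the layer containing $\omega(S)$, invoke \propref{prop:formulaSameLayer} to reduce the per-field count to $\Omega(n,k)$, and then use the subfield count from \secref{sec:cycFields} (one field in layer $0$, $x-1$ in layer $1$, and $x$ in each layer $k\ge 2$). Your write-up simply spells out the bookkeeping in more detail than the paper's two-sentence proof.
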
 
\begin{proof}
There is exactly one field in the $0$th layer, $(x-1)$ fields in the first layer, and $x$ fields in all remaining layers of $\L_{p^n}$. The equation then follows from \propref{prop:formulaSameLayer}.
\end{proof}

Equation \eqref{eq:formula1st} provides for us a formula which can calculate the number of Schur rings over $Z_{p^n}$ using $\Omega(n,k)$ for $k\le n$. This then begs the question, ``How does one compute $\Omega(n,k)$?'' Equations \eqref{eq:formulaZeroLayer}, \eqref{eq:formulaFirstLayer}, and \eqref{eq:formulaTopLayer} provides answers to this question when $k=0$, $1$, and $n$. For example, we can use \eqref{eq:formula1st} to compute $\Omega(2)$:
\begin{eqnarray*}
\Omega(2) &=& \Omega(2,0) + (x-1)\Omega(2,1) + x\Omega(2,2)\\
&=& (\Omega(0) + \Omega(1)) + (x-1)\Omega(1) + x\\
&=& (1+x) + (x-1)x + x\\
&=& x^2+x+1.
\end{eqnarray*} Using \eqref{eq:formulaLowerLayers}, we can compute all remaining values of $\Omega(n,k)$ recursively. We provide a few examples below.

\begin{Cor} For $n \ge 2$, 
\begin{equation}\Omega(n,n-1) = x + (n-2).\end{equation}
\end{Cor}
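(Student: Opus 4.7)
The plan is to prove this by a short induction on $n$, combining the base case from \eqref{eq:formulaFirstLayer} with the recursion from \eqref{eq:formulaLowerLayers} and the fact that $\Omega(n-1,n-1)=1$.

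First, I would handle the base case $n=2$. Here $n-1 = 1$, so by \propref{prop:formulaFirstLayer} we have $\Omega(2,1) = \Omega(1) = x$, which agrees with the formula $x + (n-2) = x + 0$ at $n=2$.

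For the inductive step, assume $n \ge 3$ and that $\Omega(n-1, n-2) = x + (n-3)$. Since $n-1 \ge 2$, the hypothesis $1 < k \le n$ of \propref{prop:formulaLowerLayers} applies with $k = n-1$, and the sum in \eqref{eq:formulaLowerLayers} collapses to just two terms:
\[
\Omega(n, n-1) \;=\; \sum_{j=n-2}^{n-1} \Omega(n-1, j) \;=\; \Omega(n-1, n-2) + \Omega(n-1, n-1).
\]
By \propref{prop:formulaTopLayer} applied to $Z_{p^{n-1}}$, the second summand equals $1$, and by the inductive hypothesis the first equals $x + (n-3)$. Adding these gives $x + (n-2)$, completing the induction.

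There is no real obstacle here, since the statement is essentially a direct unwinding of the recursion \eqref{eq:formulaLowerLayers} near the top of the lattice; the only point requiring a little care is that the recursion is only valid for $k > 1$, which is why the case $n=2$ must be pulled out separately and handled via \eqref{eq:formulaFirstLayer}.
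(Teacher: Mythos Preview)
Your proof is correct and follows essentially the same approach as the paper: induction on $n$ with base case $n=2$ handled via \eqref{eq:formulaFirstLayer}, and the inductive step obtained by applying \eqref{eq:formulaLowerLayers} with $k=n-1$ together with \eqref{eq:formulaTopLayer}.
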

\begin{proof}
We proceed by induction on $n$. For $n=2$, we have $\Omega(2,1) = \Omega(1) = x = x + (2-2)$. For $n > 2$, we have 
\begin{eqnarray*}
\Omega(n,n-1) &=& \Omega(n-1, n-2) + \Omega(n-1,n-1)\quad\text{by \eqref{eq:formulaLowerLayers}},\\
&=& \Omega(n-1, (n-1)-1) + 1\quad\text{by \eqref{eq:formulaTopLayer}},\\
&=& x+(n-3) + 1\quad\text{by induction},\\
&=& x + (n-2). \qquad\qquad\qquad\qquad\qquad\qquad\qedhere
\end{eqnarray*}
\end{proof}

\begin{Cor} For $n \ge 3$, 
\begin{equation}\Omega(n,n-2) = x^2 + (n-2)x + \dbinom{n-1}{2}.\end{equation}
\end{Cor}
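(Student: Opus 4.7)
The plan is to prove this by induction on $n$, leveraging the recursion \eqref{eq:formulaLowerLayers} from the preceding proposition, together with the previous corollary (which handles $\Omega(n, n-1)$) and the top-layer formula \eqref{eq:formulaTopLayer}.

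For the base case $n = 3$, we need $\Omega(3,1) = x^2 + x + 1$. By \eqref{eq:formulaFirstLayer} we have $\Omega(3,1) = \Omega(2)$, and the explicit computation of $\Omega(2) = x^2 + x + 1$ done earlier in the text matches the proposed formula (noting $\binom{2}{2} = 1$).

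For the inductive step, apply \eqref{eq:formulaLowerLayers} with $k = n-2$ to obtain
\[\Omega(n, n-2) = \sum_{j=n-3}^{n-1} \Omega(n-1, j) = \Omega(n-1, n-3) + \Omega(n-1, n-2) + \Omega(n-1, n-1).\]
By \eqref{eq:formulaTopLayer} the last term is $1$; by the preceding corollary (with $n$ replaced by $n-1$) the middle term is $x + (n-3)$; and by the induction hypothesis the first term is $x^2 + (n-3)x + \binom{n-2}{2}$. Summing gives
\[\Omega(n, n-2) = x^2 + (n-2)x + \binom{n-2}{2} + (n-2).\]

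The final simplification is Pascal's identity $\binom{n-2}{2} + \binom{n-2}{1} = \binom{n-1}{2}$, yielding the claimed formula. There is no real obstacle here since all the heavy lifting is done by \eqref{eq:formulaLowerLayers} and the previously established counts; the proof is a routine induction and algebraic simplification, paralleling the proof of the previous corollary. The only point of care is to ensure that in the induction step the index $j = n-3$ is nonnegative, which requires $n \ge 3$, matching the hypothesis of the statement.
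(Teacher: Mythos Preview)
Your proof is correct and follows essentially the same route as the paper's own proof: induction on $n$ with base case $n=3$ computed via $\Omega(3,1)=\Omega(2)=x^2+x+1$, and inductive step obtained by expanding $\Omega(n,n-2)$ via \eqref{eq:formulaLowerLayers} into the three terms $\Omega(n-1,n-3)+\Omega(n-1,n-2)+\Omega(n-1,n-1)$ and substituting the induction hypothesis, the previous corollary, and \eqref{eq:formulaTopLayer}. The only cosmetic difference is that the paper writes $\tfrac{(n-3)(n-2)}{2}$ where you write $\binom{n-2}{2}$ and invoke Pascal's identity.
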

\begin{proof}
We proceed by induction on $n$. For $n=3$, we have $\Omega(3,1) = \Omega(2) = x^2+x+1 = x^2 + (3-2)x + \dbinom{3-1}{2}$. For $n > 3$, we have 
\begin{eqnarray*}
\Omega(n,n-2) &=& \Omega(n-1, n-3) + \Omega(n-1, n-2) +  \Omega(n-1,n-1)\\
&=& \Omega(n-1, (n-1)-2) + \Omega(n-1, (n-1)-1) + 1\\
&=& \left(x^2+(n-3)x + \dfrac{(n-3)(n-2)}{2}\right) + (x+ (n-3)) + 1\\
&=& x^2 + (n-2)x + \dbinom{n-1}{2}.   \qquad\qquad\qquad\qquad\qquad\qquad\qedhere
\end{eqnarray*}
\end{proof}

By a similar induction argument, we can also prove the identity
\begin{equation}\Omega(n,n-3) = x^3+(n-2)x^2+\left(\dbinom{n-1}{2} + 1\right)x + \left(\dbinom{n}{3}-3\right)\end{equation} for $n\ge 4$. As in the previous proofs, the base case of the induction argument uses the calculation of $\Omega(3)$, which can be computed using $\Omega(3,3)$, $\Omega(3,2)$, $\Omega(3,1)$ and $\Omega(3,0).$ Thus, $\Omega(n)$ can be computed using $\Omega(n,k)$, which can be computed using $\Omega(j)$ for $j < n$. Therefore, there is a recursive procedure to compute $\Omega(n)$ from $\Omega(j)$ for $j<n$. We now will work to unearth this recursive formula.

\begin{figure}[htbp]
\caption{The first several values of $\Omega(n,k)$}
\begin{center}
\begin{enumerate}[$\Omega(1,\cdot) = $]
\item 1\\
\item  $x$, $1$\\
\item $x^2+x+1$, $x+1$, $1$\\
\item $x^3+2x^2+4x+1$, $x^2+2x+3$, $x+2$, $1$\\
\item $x^4+3x^3+8x^2+9x+2$, $x^3+3x^2+7x+7$, $x^2+3x+6$, $x+3$, $1$
\item $x^5+4x^4+13x^3+23x^2+25x+3$, $x^4+4x^3+12x^2+20x+9$, $x^3+4x^2+11x+17$, $x^2+4x+10$, $x+4$, $1$\\
\item $x^6+5x^5+19x^4+44x^3+72x^2+69x+5$, $x^5+5x^4+18x^3+40x^2+61x+54$, $x^4+5x^3+17x^2+36x+51$, $x^3+5x^2+16x+32$, $x^2+5x+15$, $x+5$, $1$\\
\item $x^7+6x^6+26x^5+73x^4+152x^3+222x^2+203x+8$, $x^6+6x^5+25x^4+68x^3+135x^2+188x+163$, $x^5+6x^4+24x^3+63x^2+119x+158$, $x^4+6x^3+23x^2+58x+109$, $x^3+6x^2+22x+53$, $x^2+6x+21$, $x+6$, $1$ 
\end{enumerate}
\end{center}
\end{figure}

Using \eqref{eq:formulaZeroLayer} and \eqref{eq:formulaFirstLayer}, we can rewrite \eqref{eq:formula1st} as
\begin{equation}\label{eq:formula2nd} \Omega(n) = x\Omega(n-1) + \sum_{k=0}^{n-2} \Omega(k) + x\sum_{k=2}^n \Omega(n,k).\end{equation}
Thus, we need to expand $\sum_{k=2}^n \Omega(n,k)$ using \eqref{eq:formulaLowerLayers}. This will produce an equation of the following form:
\begin{equation}\label{eq:CatalanCoefficients} \sum_{i=2}^n \Omega(n,i) = \sum_{i=1}^{n-1} c_{i}\Omega(n-i, 1) = \sum_{i=2}^{n} c_{i-1}\Omega(n-i) \end{equation} for some positive integers $c_i$. In particular, the $j$th iteration of \eqref{eq:formulaLowerLayers} will produce an equation of the form 
\begin{equation}\label{eq:CatalanCoefficientsTriangle} \sum_{k=2}^n \Omega(n,k) = \sum_{i=1}^{j-1} c_{i}\Omega(n-i,1) + \sum_{k=j+1}^{n} c_{jk} \Omega(n-j, k-j)\end{equation} for some positive integers $c_{jk}$. We note that $c_{i(i+1)} = c_i$ and $c_{0k} = 1$ for all $k$. Furthermore, 
\begin{equation}\label{eq:CatalanTriangle1} c_{jk} = \sum_{\ell=j}^{k} c_{(j-1)\ell}\end{equation} by \propref{prop:formulaLowerLayers}. 
When $0 < j < k-1$, \eqref{eq:CatalanTriangle1} can be rewritten recursively to give 
\begin{equation}\label{eq:CatalanTriangle2} c_{jk} = c_{(j-1)k} + \sum_{\ell=j}^{k-1} c_{(j-1)\ell} = c_{(j-1)k} + c_{j(k-1)}.\end{equation}  From \eqref{eq:CatalanTriangle2}, we can create a triangular array of integers, depicted in \figref{tab:CatalanTriangleSmall}, where $k$ indexes the rows ($k \ge 1$) and $j$ indexes the columns ($0\le j < k$). The diagonal entries of the triangle give the values of $c_i$.

\begin{figure}[htbp]
\begin{center}
\caption{The Triangular Array of $c_{jk}$ Coefficients }
\begin{tabular}{cccccccc}
 1  \\ 
 1 & 2  \\ 
 1 & 3 & 5 \\ 
 1 & 4 & 9 & 14 \\ 
 1 & 5 & 14 & 28 & 42 \\ 
 1 & 6 & 20 & 48 & 90 & 132  \\ 
 1 & 7 & 27 & 75 & 165 & 297 & 429   \\ 
1 & 8 & 35 & 110 & 275 & 572 & 1001 & 1430     \\ 
\end{tabular}
\label{tab:CatalanTriangleSmall}
\end{center}
\end{figure}

\begin{Lem}\label{lem:formulaCatalan} Let $c_i$ be the coefficients given in Equation \eqref{eq:CatalanCoefficients}. Then $c_i = \dfrac{1}{i+1}\dbinom{2i}{i}$, that is, $c_i$ is the $i$th Catalan number.
\end{Lem}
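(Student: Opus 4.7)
The plan is to identify the array $\{c_{jk}\}$ with the ballot numbers (entries of Catalan's triangle), for which a closed binomial formula is known, and then evaluate that formula along the diagonal to obtain the Catalan numbers.

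First I would conjecture the closed form
\[c_{jk} \;=\; \frac{k - j + 1}{k + 1}\binom{k+j}{k},\]
which is consistent with the boundary condition $c_{0k} = 1$ (since the formula collapses to $\binom{k}{k} = 1$) and agrees with the small entries tabulated in \figref{tab:CatalanTriangleSmall} (for instance $c_{12} = \tfrac{2}{3}\binom{3}{2} = 2$, $c_{23} = \tfrac{2}{4}\binom{5}{3} = 5$, and $c_{34} = \tfrac{2}{5}\binom{7}{4} = 14$).

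Next I would prove the closed form by induction, using the Pascal-type recurrence $c_{jk} = c_{(j-1)k} + c_{j(k-1)}$ from \eqref{eq:CatalanTriangle2}. I would induct on $k + j$, with the base case $j = 0$ supplied by the stated boundary. Substituting the inductive hypothesis for the two right-hand terms and applying Pascal's identity $\binom{k+j}{k} = \binom{k+j-1}{k} + \binom{k+j-1}{k-1}$, the required equality reduces after a short manipulation to the elementary identity
\[k\,\binom{k+j-1}{k} \;=\; j\,\binom{k+j-1}{k-1},\]
which is immediate from the factorial expansion of each side.

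Finally, I would specialize to the diagonal. The triangle's diagonal entry, which the paper identifies with $c_i$, yields
\[c_i \;=\; \frac{2}{i+1}\binom{2i-1}{i},\]
and the standard identity $\binom{2i}{i} = 2\binom{2i-1}{i}$ rewrites this at once as $\tfrac{1}{i+1}\binom{2i}{i}$, the $i$-th Catalan number. The main technical obstacle is the binomial bookkeeping in the inductive step; once the closed form is established, the identification of the diagonal entries with Catalan numbers is a one-line simplification.
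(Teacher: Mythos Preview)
Your argument is correct and in spirit matches the paper's, but it is considerably more explicit. The paper's proof simply extends the array by setting $c_{00}=1$ and $c_{jj}=c_{(j-1)j}$, recognizes the resulting array as Catalan's Triangle, and cites the standard fact that its diagonal consists of the Catalan numbers. You instead prove the ballot-number closed form $c_{jk}=\tfrac{k-j+1}{k+1}\binom{k+j}{k}$ directly by induction and then read off the diagonal. What you gain is a self-contained proof that does not rely on recognizing a named combinatorial object; what the paper gains is brevity.

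One small point to tighten: the Pascal-type recurrence \eqref{eq:CatalanTriangle2} is stated in the paper only for $0<j<k-1$, so your induction does not immediately cover the diagonal case $j=k-1$. You can close this gap in either of two ways. First, for $j=k-1$ use the summation form \eqref{eq:CatalanTriangle1} directly, which gives $c_{(k-1)k}=c_{(k-2)(k-1)}+c_{(k-2)k}$, and verify that your closed form satisfies this (it does, by a short computation). Second, observe that your formula extends naturally to $j=k$ with $c_{kk}=\tfrac{1}{k+1}\binom{2k}{k}$ and that this value coincides with $c_{(k-1)k}$; the Pascal recurrence then holds for all $0<j\le k-1$ in the extended array, exactly as in the paper's proof. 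Either patch is routine and your final simplification $\tfrac{2}{i+1}\binom{2i-1}{i}=\tfrac{1}{i+1}\binom{2i}{i}$ is correct.
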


\begin{proof}
For convenience, we define $c_{00} =1$ and $c_{jj} = c_{(j-1)j}$ for $j > 0$. This extended triangular array is known as Catalan's Triangle. One property of Catalan's Triangle is that the sequence of diagonal entries is the sequence of Catalan numbers \cite{CatalanTriangle}.
\end{proof}

\begin{Thm}\label{thm:formula3rd} The number of Schur rings over $Z_{p^n}$, where $p$ is an odd prime and $n \ge 1$, is given by the following recursive equation:
\begin{equation}\label{eq:formula3rd}\Omega(n) = x\Omega(n-1) + \sum_{k=2}^n(c_{k-1}x + 1)\Omega(n-k),\end{equation} where $\Omega(0) =1$, $\Omega(1) = x$ denotes the number of divisors of $p-1$, and $c_k = \dfrac{1}{k+1}\dbinom{2k}{k}$ is the $k$th Catalan number.
\end{Thm}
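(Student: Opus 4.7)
The plan is to simply assemble the ingredients that have already been set up just above the theorem. Everything needed is in equations \eqref{eq:formula2nd}, \eqref{eq:CatalanCoefficients}, and \lemref{lem:formulaCatalan}; the work is essentially bookkeeping.

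First I would start from equation \eqref{eq:formula2nd}, which rewrites \eqref{eq:formula1st} as
\[\Omega(n) = x\Omega(n-1) + \sum_{k=0}^{n-2} \Omega(k) + x\sum_{k=2}^n \Omega(n,k).\]
The middle sum can be reindexed via the substitution $k \mapsto n-k$ to obtain
\[\sum_{k=0}^{n-2}\Omega(k) = \sum_{k=2}^{n} \Omega(n-k).\]
This is the source of the ``$+1$'' coefficient in the final formula.

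Next I would apply the expansion of $\sum_{k=2}^n \Omega(n,k)$ in terms of the Catalan triangle. By iterating \propref{prop:formulaLowerLayers} until the inner sum collapses onto the first layer, one obtains \eqref{eq:CatalanCoefficients}:
\[\sum_{k=2}^n \Omega(n,k) = \sum_{k=2}^n c_{k-1}\Omega(n-k),\]
using \propref{prop:formulaFirstLayer} at the last step to replace $\Omega(n-i,1)$ by $\Omega(n-i-1)$ and then shifting the index. \lemref{lem:formulaCatalan} identifies the diagonal coefficients $c_{k-1}$ as the Catalan numbers $\frac{1}{k}\binom{2(k-1)}{k-1}$.

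Finally, substituting these two expressions back into \eqref{eq:formula2nd} yields
\[\Omega(n) = x\Omega(n-1) + \sum_{k=2}^n \Omega(n-k) + x\sum_{k=2}^n c_{k-1}\Omega(n-k) = x\Omega(n-1) + \sum_{k=2}^n (c_{k-1}x + 1)\Omega(n-k),\]
which is \eqref{eq:formula3rd}. The only potentially tricky step is confirming the reindexing of the ``lower order'' sum $\sum_{k=0}^{n-2}\Omega(k)$ lines up precisely with the Catalan-weighted sum so that they can be combined term-by-term; since both are indexed by $\Omega(n-k)$ for $2 \le k \le n$, this matches exactly and the combined coefficient is $c_{k-1}x + 1$, as claimed. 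Since the recursion reduces to the already-established base cases $\Omega(0)=1$ and $\Omega(1)=x$, no further verification is required.
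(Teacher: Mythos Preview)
Your proposal is correct and follows essentially the same route as the paper's proof: start from \eqref{eq:formula2nd}, replace $\sum_{k=2}^n \Omega(n,k)$ by $\sum_{k=2}^n c_{k-1}\Omega(n-k)$ via \eqref{eq:CatalanCoefficients}, reindex the middle sum, and combine. The only cosmetic difference is that you spell out the reindexing and the use of \propref{prop:formulaFirstLayer} more explicitly than the paper does.
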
 

For $n=1$, we are considering the sum in \eqref{eq:formula3rd} to be empty.

\begin{proof}
The statements $\Omega(0)=1$ and $\Omega(1)=x$ have already been proven. For $n\ge 2$,  
\begin{eqnarray*}
\Omega(n) &=&x\Omega(n-1) + \sum_{k=0}^{n-2} \Omega(k) + x\sum_{k=2}^n \Omega(n,k), \quad\text{by \eqref{eq:formula2nd},}\\
&=& x\Omega(n-1) + \sum_{k=0}^{n-2} \Omega(k) + x\sum_{k=2}^n c_{k-1}\Omega(n-k), \quad\text{by \eqref{eq:CatalanCoefficients},}\\
&=&  x\Omega(n-1) + \sum_{k=2}^n(c_{k-1}x + 1)\Omega(n-k).
\end{eqnarray*} Finally, the formula follows from \lemref{lem:formulaCatalan}.
\end{proof}

By \eqref{eq:formula3rd}, $\Omega(n)$ can be computed recursively without reference to $\Omega(n,k)$ and makes for a much more efficient recurrence. The first several values of $\Omega(n)$ are listed in \figref{tab:OmegaPolynomial}. Now, $\Omega(n)$ is a polynomial of $x$. Thus, the number of Schur rings over $\Z_{p^n}$ is computed by evaluating this polynomial for a specific value of $x$ which depends on the prime $p$. \tableref{table:oddNumberSRings} lists the number of Schur rings over $Z_{p^n}$ up to the tenth power for the first seven odd primes.

\begin{figure}[hbtp]
\caption{The first several $\Omega$-polynomials}
\begin{center}
\begin{enumerate}[$\Omega(1) =$]
\item $x$\\
\item $x^2+x + 1$\\
\item $x^3+2x^2 + 4x + 1$\\
\item $x^4 + 3x^3+8x^2+9x + 2$\\
\item $x^5+4x^4+13x^3+23x^2+25x+3$\\
\item $x^6 + 5x^5+19x^4+44x^3+72x^2+69x+5$\\
\item $x^7 + 6x^6+26x^5+73x^4+152x^3+222x^2+203x + 8$\\
\item $x^8+7x^7+34x^6+111x^5+275x^4+511x^3+703x^2 + 623x+13$\\
\item $x^9 + 8x^8 + 43x^7+159x^6+452x^5+997x^4+1725x^3  +2272x^2+1990x+21$\\
\item $x^{10} + 9x^9 + 53x^8 +218x^7+ 695x^6 +1754x^5+ 3572x^4  +5854x^3 +7510x^2 + 6559x+ 34$
\end{enumerate}
\end{center}
\label{tab:OmegaPolynomial}
\end{figure}

\begin{table}[hbtp]
\caption{Number of Schur Rings over $Z_{p^k}$}
\begin{center}
\begin{tabular}{|c|c|c|c|c|c|c|c|}
\hline
\textbf{$k \backslash p$} & \textbf{3} & \textbf{5} & \textbf{7} & \textbf{11} & \textbf{13} & \textbf{17} & \textbf{19} \\\hline
\textbf{1} & 2 & 3 & 4 & 4 & 6 & 5 & 6\\ \hline
\textbf{2} & 7 & 13 & 21 & 21 & 43 & 31 & 43\\ \hline
\textbf{3} & 25 & 58 & 113 & 113 & 313 & 196 & 313 \\ \hline
\textbf{4} & 92 & 263 & 614 & 614 & 2,288 & 1,247 & 2,288\\ \hline
\textbf{5} & 345 & 1,203 & 3,351 & 3,351 & 16,749 & 7,953 & 16,749\\ \hline
\textbf{6} & 1,311 & 5,531 & 18,329 & 18,329 & 122,675 &50,775 & 122,675\\ \hline
\textbf{7} & 5,030 & 25,511 & 100,372 & 100,372 & 898,706 & 324,323 & 898,706\\ \hline
\textbf{8} & 19,439 & 117,910 & 550,009 & 550,009 & 6,584,443 & 2,072,078 & 6,584,443\\ \hline
\textbf{9} & 75,545 & 545,730 & 3,015,021 & 3,015,021 & 48,243,393 & 13,239,896 & 48,243,393\\ \hline
\textbf{10} & 294,888 & 2,528,263 & 16,531,326 & 16,531,326 & 353,479,684 & 84,603,579 & 353,479,684\\ \hline
\end{tabular}
\end{center}
\label{table:oddNumberSRings}
\end{table}

Examining \figref{tab:OmegaPolynomial}, one can recognize a few patterns with these polynomials. First, $\Omega(n)$ is always a monic degree $n$ polynomial. Next, the coefficient of $x^{n-1}$ is always $n-1$. Both of these statements can be easily proven by induction. Other statements about the coefficients of $\Omega(n)$ can also be stated and proven. Perhaps the most surprising sequence of coefficients is the sequence of constant terms.

\begin{Cor} Let $p$ be an odd prime. Then let $f_n(x) = \Omega(n) \in \Z[x]$. Then $f_n(0) = F_{n-1}$, where $F_n$ is the $n$th term of the Fibonacci sequence.
\end{Cor}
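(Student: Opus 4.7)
The plan is to evaluate the recursive formula \eqref{eq:formula3rd} at $x = 0$ and show that the resulting integer sequence satisfies the Fibonacci recurrence. Let $a_n = f_n(0)$. Since $\Omega(n) \in \Z[x]$ with $\Omega(0) = 1$ and $\Omega(1) = x$, we immediately get $a_0 = 1$ and $a_1 = 0$. Substituting $x = 0$ into \eqref{eq:formula3rd} kills the Catalan-number terms and the $x\Omega(n-1)$ term, leaving only the constants inside the sum:
\[ a_n = \sum_{k=2}^n a_{n-k} = a_{n-2} + a_{n-3} + \cdots + a_0, \qquad n \ge 2. \]

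Next, I would extract the Fibonacci recurrence by taking a successive difference. For $n \ge 3$, write the same formula for $a_{n-1}$:
\[ a_{n-1} = a_{n-3} + a_{n-4} + \cdots + a_0. \]
Subtracting gives $a_n - a_{n-1} = a_{n-2}$, i.e.\ $a_n = a_{n-1} + a_{n-2}$. This is precisely the Fibonacci recurrence.

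It remains to check the initial conditions against the standard convention $F_0 = 0$, $F_1 = 1$. We have $a_1 = 0 = F_0$ and $a_2 = a_0 = 1 = F_1$, so by induction $a_n = F_{n-1}$ for all $n \ge 1$, which is the claim.

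The only mildly delicate point is lining up the indexing: one must verify the base cases $a_1, a_2$ (the telescoping identity $a_n - a_{n-1} = a_{n-2}$ requires $n \ge 3$) and, separately, that $a_2 = a_0 = 1$ comes out correctly from the $x=0$ specialization of \eqref{eq:formula3rd} at $n=2$. Once those are confirmed, the rest is immediate from the Fibonacci recurrence. No new structural facts about Schur rings are needed—this is purely a corollary of the shape of the recursion in \thmref{thm:formula3rd}.
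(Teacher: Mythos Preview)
Your proof is correct and follows essentially the same approach as the paper: both evaluate \eqref{eq:formula3rd} at $x=0$ to obtain the sum recurrence $a_n=\sum_{k=0}^{n-2}a_k$ and then identify this with the Fibonacci sequence. The only cosmetic difference is that the paper first proves the identity $F_n=1+\sum_{k=0}^{n-2}F_k$ and matches it directly, whereas you telescope to recover the standard recurrence $a_n=a_{n-1}+a_{n-2}$; these are equivalent manipulations.
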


\begin{proof} 
First, we claim that $F_{n} = 1+\sum_{k=0}^{n-2} F_k$ for $n \ge 2$. For $n = 2$, we get $F_2 = 1+F_0 = 1+0 = 1$. For $n > 2$, we get $F_n = F_{n-1} + F_{n-2} = \left(1+\sum_{k=0}^{n-3} F_k\right) + F_{n-2} = 1 + \sum_{k=0}^{n-2} F_k$, which proves the claim. 

It is easy enough to see that $f_1(0) = 0 = F_0$ and $f_2(0) = 1 = F_1$. Suppose that $f_k(0) = F_{k-1}$ for all $k < n$. By \eqref{eq:formula3rd}, 
\[f_n(0) = \sum_{k=2}^n f_{n-k}(0) = \sum_{k=0}^{n-2} f_{k}(0) = 1+ \sum_{k=1}^{n-2} f_{k}(0) = 1+ \sum_{k=1}^{n-2} F_{k-1} = 1+ \sum_{k=0}^{n-3} F_{k} = F_{n-1}.\qedhere\]
\end{proof}

Let $\mathcal F(z) = \sum_{n=0}^\infty \Omega(n)z^n$ be the generating function of $\Omega$. Using \[\mathcal C(z) = \sum_{n=0}^\infty c_nz^n = \dfrac{1-\sqrt{1-4z}}{2z},\] the generating function for the Catalan numbers, and the usual generating function calculations, one computes that 
\begin{equation} \mathcal F(z)  =\dfrac{2(1-z)}{-2z^2+(x-2)z-(x-2) + x(1-z)\sqrt{1-4z}} \end{equation}
Now, one can continue working with the generating function of $\Omega(n)$ using the typical combinatorial methods to produce a non-recursive formula for $\Omega(n)$. Unfortunately, this formula is highly complicated, so we will be content with \eqref{eq:formula3rd}.

\section{Counting Schur Rings Over Cyclic $2$-groups}\label{sec:CountEven}
As is common practice, the case $p=2$ must be treated separately from all other primes as it is the only exceptional case. This section is dedicated to the treatment of Schur rings over $Z_{2^n}$. 

As in the odd case, we mention that the notation introduced in  \defref{def:Omega} applies for $p=2$ also. There are two critical differences between $\L_{2^n}$ and $\L_{p^n}$, for $p$ odd, that should be mentioned. First, there is no first layer on $\L_{2^n}$ since $\L_2 = \L_1 = \{\Q\}$. This will cause our recurrence relation on $\Omega(n)$ to have ``extra'' initial conditions, that is, the recursion does not stabilize until the fourth stage, as opposed to the second stage for odd primes. Second, the Galois group of $\K_{2^n}$ is not cyclic for $n \ge 3$. This gives the lattice $\L_{2^n}$ a different shape than the other lattices we have seen, which translates to different recurrence relations on $\Omega(n,k)$, which we will see below.

Despite these differences, there are still some important similarities between the even and odd cases. For example, it still holds that $\Omega(0) = 1$. Another similarity is the fact that $\Omega(1) = 1$, which is the number of divisors of $2-1=1$. It also holds that \propref{prop:formulaZeroLayer}, \propref{prop:formulaTopLayer}, and \propref{prop:formulaLowerLayers} (for all $n\ge 3$ and $2 < k \le n$) remain true if $p=2$ by the same proofs as before. From these, we can compute 
\[\Omega(2) = \Omega(2,0) + \Omega(2,2) = (\Omega(0) + \Omega(1)) + 1 = 3.\] Now, \propref{prop:formulaFirstLayer} no longer applies since there is no first layer. Instead, we will treat $k=2$ as the base case in the recurrence relation on $\Omega(n,k)$.

\begin{Prop}\label{prop:formulaSecondLayer}  For $n\ge 3$, the number of Schur rings over $Z_{2^n}$ mapping to $\K_4=\Q(i)$ with respect to $\omega$ is equal to the difference between number of Schur rings over $Z_{2^{n-1}}$ and the number of Schur rings over $Z_{2^{n-2}}$ mapping onto $\Q$, that is, 
\begin{equation}\label{eq:formulaSecondLayer} \Omega(n,2) = \Omega(n-1) - \Omega(n-2,0). \end{equation}
\end{Prop}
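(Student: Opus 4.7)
The plan is to set up a bijection between Schur rings $S$ over $Z_{2^n}$ with $\omega(S)=\K_4$ and Schur rings $T$ over $Z_{2^{n-1}}$ in which $Z_2$ is a $T$-subgroup, and then count the complement of this latter set.

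First, given such an $S$, \thmref{thm:Claim1} makes $S$ wedge-decomposable. Choosing a wedge decomposition $1<K\le H<Z_{2^n}$ with $H$ minimal via \corref{cor:possibleNucleus}, I have $S_H$ wedge-indecomposable with $\omega(S_H)=\K_4\neq\Q$ by \eqref{prop:NucleusDetermines}; so $S_H$ is an orbit ring. Writing $H=Z_{2^k}$, if $k\ge 3$ then $\K_4\in\L_{2^{k-1}}\setminus\{\Q\}$, which contradicts wedge-indecomposability by \thmref{thm:Claim1}; hence $k=2$, and \thmref{thm:topLayerRational} forces $S_H=\Q[Z_4]$. Since $K\le Z_4$, every $S$-class outside $Z_4$ is a union of cosets of $K$, and in particular of $Z_2$. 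Therefore $S=\Q[Z_4]\wedge_{Z_2}T$ where $T=\pi_{Z_2}(S)$ is a Schur ring over $Z_{2^{n-1}}$ with $T_{Z_2}=\Q[Z_2]$. Conversely, any Schur ring $T$ over $Z_{2^{n-1}}$ with $Z_2$ as a subgroup makes $\Q[Z_4]\wedge_{Z_2}T$ a valid wedge product over $Z_{2^n}$, which maps to $\K_4$ under $\omega$ by \eqref{prop:NucleusDetermines}. The correspondence $S\leftrightarrow T$ is a bijection, so $\Omega(n,2)$ equals the number of Schur rings over $Z_{2^{n-1}}$ in which $Z_2$ is a subgroup.

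Next I will identify the complement. Let $T$ be a Schur ring over $Z_{2^{n-1}}$ in which $Z_2$ is not a $T$-subgroup. By \corref{thm:pnLeungMan}, $T$ is trivial, an orbit ring, or wedge-decomposable. The orbit case is excluded because the order-$2$ element of $Z_{2^{n-1}}$ is unique and hence a singleton automorphism orbit, automatically making $Z_2$ a subgroup. If $T$ is wedge-decomposable, choose a minimal decomposition $1<K'\le H'<Z_{2^{n-1}}$; the orbit case for $T_{H'}$ is ruled out by the same reasoning (every nontrivial subgroup of a cyclic $2$-group contains $Z_2$), so $T_{H'}=\Q[H']^0$ is trivial. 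The only subgroups of $\Q[H']^0$ are $\{1\}$ and $H'$, so $K'=H'$ and the wedge decomposition is actually a wreath product $T=\Q[H']^0\wr B$. Finally, $H'\neq Z_2$ is forced (else $Z_2$ would be a $T$-subgroup), so $H'=Z_{2^k}$ with $2\le k\le n-1$; the value of $k$ is moreover uniquely recoverable from $T$ as $\{1\}$ together with the $T$-class containing the order-$2$ element, so each such $T$ corresponds to a unique pair $(k,B)$ with $B$ a Schur ring over $Z_{2^{n-1-k}}$.

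Summing the resulting count,
\[\sum_{k=2}^{n-1}\Omega(n-1-k) \;=\; \sum_{j=0}^{n-3}\Omega(j) \;=\; \Omega(n-2,0),\]
by \propref{prop:formulaZeroLayer} (which, as noted, holds for $p=2$). Combining with step one yields $\Omega(n-1)=\Omega(n,2)+\Omega(n-2,0)$, and \eqref{eq:formulaSecondLayer} follows upon rearrangement. The main obstacle is in step one, namely verifying that every $S$-class outside $Z_4$ is a union of cosets of $Z_2$ regardless of whether $K=Z_2$ or $K=Z_4$; once this is in hand, the clean wedge-product reconstruction of $S$ from $\pi_{Z_2}(S)$ is routine.
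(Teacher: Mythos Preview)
Your argument is correct and follows the same route as the paper: both establish a bijection between Schur rings $S$ over $Z_{2^n}$ with $\omega(S)=\K_4$ and Schur rings $T$ over $Z_{2^{n-1}}$ having $Z_2$ as a $T$-subgroup (via $S=\Q[Z_4]\wedge_{Z_2}T$), then identify the complement as the family $\Q[Z_{2^k}]^0\wr B$ with $k\ge 2$ and invoke \propref{prop:formulaZeroLayer} to count it as $\Omega(n-2,0)$. One small bookkeeping point: in your complement analysis the range $2\le k\le n-1$ is right for the final sum, but a genuine wedge decomposition has $H'<Z_{2^{n-1}}$, so $k\le n-2$ there; the term $k=n-1$ actually corresponds to the \emph{trivial} Schur ring $\Q[Z_{2^{n-1}}]^0$, which you listed as a separate case at the outset but never explicitly returned to---it should be folded in by hand.
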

When $n=2$, we have $\Omega(2,2) = 1$ by \eqref{eq:formulaTopLayer}.
\begin{proof}
Let $S$ be a Schur ring over $Z_{2^n}$ such that $\omega(S) = \Q(i)$. Since $n\ge 3$, it must be that $S$ is wedge-decomposable of the form 
$S = \Q[Z_4]\wedge T$ for some Schur ring $T$ over $Z_{2^{n-1}}$ such that $T\cap \Q[Z_2] = \Q[Z_2]$, by the same reasoning used in \propref{prop:formulaLowerLayers}. Now, every Schur ring over $Z_{2^{n-1}}$ has this property except those of the form $T = \Q[Z_{2^k}]^0\wr T'$ for $1< k \le n-1$. Now, there are exactly $\Omega(n-2,0)$ such Schur rings by \propref{prop:formulaZeroLayer}. Therefore, the result follows.\end{proof}

The major consequence of $\G_{2^n}$ not being cyclic is that \propref{prop:formulaSameLayer} fails for some of the layers of $\L_{2^n}$. For example, the number of Schur rings over $Z_{16}$ which map onto $\K_8=\Q(\zeta_8)$ is three but the number of Schur rings mapping onto $\Q(\zeta_8+\zeta_8^{-1})$ is four. By \secref{sec:cycFields}, for $k\ge 3$, the $k$th layer of $\L_{2^n}$ contains three fields: $\Q(\zeta_{2^k})$, $\Q(\zeta_{2^k}+\zeta_{2^k}^{-1})$, and $\Q(\zeta_{2^k}-\zeta_{2^k}^{-1})$. Let $\Omega_\s(n,k)$ be the number of Schur rings over $Z_{2^n}$ which map onto $\Q(\zeta_{2^n}+\zeta_{2^n}^{-1})$ via $\omega$. It holds that $\s(Z_{2^n})$ is the unique Schur ring over $Z_{2^n}$ which maps onto $\Q(\zeta_{2^n}+\zeta_{2^n}^{-1})$, by \thmref{thm:topLayerRational}. This gives the following formula:
\begin{equation}\label{eq:SformulaTopLayer} \Omega_\s(n,n)=1. \end{equation} Likewise, $\Q[Z_{2^n}]^{\langle \sigma_{2^{n-1}-1}\rangle}$ is the unique Schur ring over $Z_{2^n}$ which maps onto $\Q(\zeta_{2^n}-\zeta_{2^n}^{-1})$. So for the top layer, the number of Schur rings mapping onto a given field is constant. This fact allows use to compute $\Omega(3)$:
\[\Omega(3) = \Omega(3,0) + \Omega(3,2) + 3\Omega(3,3) = (\Omega(0) + \Omega(1) + \Omega(2)) + (\Omega(2) - \Omega(0)) + 3 = 10.\]

Although \propref{prop:formulaSameLayer} is false in general for $p=2$, it is still ``mostly'' true, as explained in the next proposition.

\begin{Prop}\label{prop:2formulaSameLayer} The number of Schur rings over $Z_{2^n}$ mapping onto $\Q(\zeta_{2^k}+\zeta_{2^k}^{-1})$ via $\omega$ is the same as the number of Schur rings mapping onto $\Q(\zeta_{2^k}-\zeta_{2^k}^{-1})$.
\end{Prop}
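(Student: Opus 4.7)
The plan is to adapt the wedge-decomposition argument of \propref{prop:formulaLowerLayers} to the non-cyclotomic subfields that appear in $\L_{2^n}$. Write $E_k^+ = \Q(\zeta_{2^k}+\zeta_{2^k}^{-1})$ and $E_k^- = \Q(\zeta_{2^k}-\zeta_{2^k}^{-1})$; these are the two ``extra'' fields in the $k$th layer of $\L_{2^n}$ for $k\ge 3$. The top-layer case $k=n$ is immediate from \thmref{thm:topLayerRational}, which gives exactly one orbit Schur ring mapping onto each, so assume $3\le k < n$.

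Given a Schur ring $S$ over $Z_{2^n}$ with $\omega(S) = E_k^{\pm}$, \thmref{thm:Claim1} forces $S$ to be wedge-decomposable, and \corref{cor:possibleNucleus} together with the reasoning of \propref{prop:formulaLowerLayers} yields a wedge decomposition with nucleus $H = Z_{2^k}$ and $S_H$ equal to the unique orbit Schur ring $\Q[Z_{2^k}]^{\H^{\pm}}$ mapping onto $E_k^{\pm}$, where $\H^+ = \langle \sigma_{2^k-1}\rangle$ and $\H^- = \langle \sigma_{2^{k-1}-1}\rangle$. Taking the minimal $K = Z_2$ as in that proposition, $S$ is then determined by a Schur ring $T$ over $Z_{2^n}/Z_2 \cong Z_{2^{n-1}}$ subject to the compatibility $T_{Z_{2^{k-1}}} = \pi(\Q[Z_{2^k}]^{\H^{\pm}})$, where $\pi : Z_{2^n} \to Z_{2^{n-1}}$ is the quotient map.

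The crux is that this compatibility condition on $T$ is independent of the sign. Under the restriction $\G_{2^k}\to \G_{2^{k-1}}$, both generators of $\H^{\pm}$ act on $\K_{2^{k-1}}$ as complex conjugation: clearly $\sigma_{2^k-1}$ restricts to $\zeta_{2^{k-1}}\mapsto \zeta_{2^{k-1}}^{-1}$, while $\sigma_{2^{k-1}-1}$ sends $\zeta_{2^k}\mapsto -\zeta_{2^k}^{-1}$, so applied to $\zeta_{2^{k-1}} = \zeta_{2^k}^2$ it again gives $\zeta_{2^{k-1}}\mapsto \zeta_{2^{k-1}}^{-1}$. Equivalently, from the explicit description of $\L_{2^n}$ in \secref{sec:cycFields} (see \figref{fig:64}), both $E_k^+$ and $E_k^-$ sit directly above $E_{k-1}^+$, so $E_k^+\cap\K_{2^{k-1}} = E_k^-\cap\K_{2^{k-1}} = E_{k-1}^+$. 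Hence $\pi(\Q[Z_{2^k}]^{\H^+}) = \Q[Z_{2^{k-1}}]^{\H'} = \pi(\Q[Z_{2^k}]^{\H^-})$, where $\H'$ is the subgroup of $\G_{2^{k-1}}$ corresponding to $E_{k-1}^+$, and the set of allowable $T$'s coincides for the two signs. The assignment
\[\Q[Z_{2^k}]^{\H^+}\wedge_{Z_2} T\ \longmapsto\ \Q[Z_{2^k}]^{\H^-}\wedge_{Z_2} T\]
then defines the desired bijection.

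The main obstacle is confirming that this assignment really is a bijection, not merely a map in each direction. Once $H$, $K$, and $S_H$ are fixed, $T$ is recoverable as $\pi(S)$, so the quadruple $(H,K,S_H,T)$ is uniquely determined by $S$; combined with the minimal-nucleus convention from \corref{cor:possibleNucleus}, this promotes the assignment to a genuine bijection. If pinning down the canonical form proves delicate, a clean fallback is to mirror the inductive recursion of \propref{prop:formulaSameLayer}, writing $\Omega(n, E_k^{\pm}) = \sum_F \Omega(n-1, F)$ where $F$ ranges over subfields of $\K_{2^{n-1}}$ satisfying $F\cap\K_{2^{k-1}} = E_{k-1}^+$, and observing that the indexing set is independent of the sign.
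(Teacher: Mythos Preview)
Your argument is correct and follows the same approach as the paper: both proofs handle the top layer via \thmref{thm:topLayerRational}, then for $k<n$ observe that the two orbit Schur rings $\Q[Z_{2^k}]^{\H^+}$ and $\Q[Z_{2^k}]^{\H^-}$ have the same image under the quotient $\pi:Z_{2^n}\to Z_{2^{n-1}}$ (namely $\s(\Q[Z_{2^{k-1}}])$), so the compatibility condition on the wedge factor $T$ is identical in the two cases. Your write-up is in fact more careful than the paper's---you make explicit why the restrictions of $\sigma_{2^k-1}$ and $\sigma_{2^{k-1}-1}$ to $\K_{2^{k-1}}$ coincide, and you address the well-definedness of the bijection by noting that $T=\pi(S)$ is recoverable from $S$---whereas the paper simply asserts that equal images yield equal counts of wedge products.
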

\begin{proof} If $\Q(\zeta_{2^k}+\zeta_{2^k}^{-1})$ and $\Q(\zeta_{2^k}-\zeta_{2^k}^{-1})$ are in the top layer, then there is exactly one Schur ring mapping onto each field by \eqref{eq:SformulaTopLayer}. Otherwise, each Schur ring mapping onto these fields must be wedge-decomposable. Let $\pi : Z_{2^n} \to Z_{2^{n-1}}$ be the natural quotient map. Then $\pi(\s(Z_{2^k})) = \pi(\Q[Z_{2^k}]^{\langle \sigma_{2^k-1}\rangle}) = \s(Z_{2^{k-1}}) = \pi(\Q[Z_{2^k}]^{\langle \sigma_{2^{k-1}-1}\rangle})$. Since the images are the same, the number of possible wedge products which map on $\Q(\zeta_{2^k}+\zeta_{2^k}^{-1})$ is the same as the number of possible wedge products which map onto $\Q(\zeta_{2^k}-\zeta_{2^k}^{-1})$.
\end{proof}

\begin{Thm} The number of Schur rings over $Z_{2^n}$, where $n \ge 3$, is given by the following  equation:
\begin{equation}\label{eq:2formula1st} \Omega(n) = \Omega(n,0) + \Omega(n,2) + \sum_{k=3}^n (\Omega(n,k) + 2\Omega_\s(n,k)).
\end{equation}
\end{Thm}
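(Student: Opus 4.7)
The strategy is to partition the set of all Schur rings over $Z_{2^n}$ according to the subfield of $\K_{2^n}$ that each one maps onto under $\omega$. Since $\omega(S)$ is always a subfield of $\K_{2^n}$ (as observed in Section 4), every Schur ring falls into exactly one such class, so the total count $\Omega(n)$ is the sum, taken over all subfields $E \in \L_{2^n}$, of the number of Schur rings $S$ with $\omega(S) = E$.

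First I would walk through the layer structure of $\L_{2^n}$ as catalogued in Section 2. The $0$th layer is just $\{\Q\}$, contributing the term $\Omega(n,0)$. There is no ``first layer'' in the $p=2$ case since $\K_2 = \Q$, so the next nontrivial layer is the second, which consists only of $\K_4 = \Q(i)$ and contributes $\Omega(n,2)$. For each $k$ with $3 \le k \le n$, the $k$th layer consists of exactly three fields: $\K_{2^k} = \Q(\zeta_{2^k})$, $\Q(\zeta_{2^k}+\zeta_{2^k}^{-1})$, and $\Q(\zeta_{2^k}-\zeta_{2^k}^{-1})$.

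Next I would count contributions from each of these three fields. By definition of $\Omega(n,k)$, the field $\Q(\zeta_{2^k})$ contributes $\Omega(n,k)$ Schur rings, and by definition of $\Omega_\s(n,k)$, the field $\Q(\zeta_{2^k}+\zeta_{2^k}^{-1})$ contributes $\Omega_\s(n,k)$. The key input is Proposition \ref{prop:2formulaSameLayer}, which tells us that the number of Schur rings mapping onto $\Q(\zeta_{2^k}-\zeta_{2^k}^{-1})$ equals the number mapping onto $\Q(\zeta_{2^k}+\zeta_{2^k}^{-1})$, namely $\Omega_\s(n,k)$. Summing the three contributions gives $\Omega(n,k) + 2\Omega_\s(n,k)$ for each $k \ge 3$.

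Putting these pieces together by summing over all layers yields exactly \eqref{eq:2formula1st}. There is no real obstacle: the proof is bookkeeping once the layer structure of $\L_{2^n}$ and the symmetry provided by Proposition \ref{prop:2formulaSameLayer} are in hand. The only point that deserves explicit mention is that the enumeration of subfields in each layer is complete and disjoint, which is already guaranteed by the layered decomposition of $\L_{2^n}$ described in Section 2.
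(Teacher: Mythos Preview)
Your proposal is correct and follows essentially the same approach as the paper: partition the Schur rings by which subfield of $\L_{2^n}$ they map onto, enumerate the fields layer by layer, and invoke \propref{prop:2formulaSameLayer} for the equality between the counts over $\Q(\zeta_{2^k}+\zeta_{2^k}^{-1})$ and $\Q(\zeta_{2^k}-\zeta_{2^k}^{-1})$. The paper's proof is just a terser version of exactly this bookkeeping.
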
 
\begin{proof}
There is exactly one field in the $0$th layer and the second layer of $\L_{2^n}$. Each other layer of $\L_{2^n}$ contains three fields: $\Q(\zeta_{2^n})$, $\Q(\zeta_{2^n}+\zeta_{2^n}^{-1})$, and $\Q(\zeta_{2^n}-\zeta_{2^n}^{-1})$. The equation then follows from \propref{prop:2formulaSameLayer}.
\end{proof}

A direct consequence of \eqref{eq:2formula1st} and \lemref{lem:formulaCatalan} is the following:
\begin{equation}\label{eq:2formula2nd} \Omega(n) = \Omega(n,0) +  \sum_{k=0}^{n-2} c_k\Omega(n-k,2) + 2\sum_{k=3}^n \Omega_\s(n,k) \end{equation}
Therefore, we seek to express $2\sum_{k=3}^n \Omega_\s(n,k)$ in terms of the $\Omega(n,k)$.

\begin{Prop} For $k > 3$,
\begin{equation}\label{eq:SformulaMiddle} \Omega_\s(n,k) = \Omega_\s(n-1,k-1) + 2\sum_{j=k}^{n-1} \Omega_\s(n-1,j)\end{equation}
\end{Prop}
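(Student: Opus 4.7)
The strategy mirrors that of \propref{prop:formulaLowerLayers}, adapted to account for the non-cyclic structure of $\G_{2^n}$. The plan is to identify a canonical wedge decomposition of each Schur ring $S$ over $Z_{2^n}$ mapping onto $\Q(\zeta_{2^k}+\zeta_{2^k}^{-1})$, translate the counting problem into an enumeration of subfields of $\K_{2^{n-1}}$, and then apply \propref{prop:2formulaSameLayer} layer by layer. The doubling by $2$ in the statement reflects the two ``symmetric'' fields $\Q(\zeta_{2^j}\pm\zeta_{2^j}^{-1})$ present in each layer $j\ge 3$ of $\L_{2^{n-1}}$, both of which restrict to $\Q(\zeta_{2^{k-1}}+\zeta_{2^{k-1}}^{-1})$ on $\K_{2^{k-1}}$.

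First, I would handle the boundary case $k=n$ separately: by \thmref{thm:topLayerRational} the unique such Schur ring is $\s(Z_{2^n})$, so $\Omega_\s(n,n)=1=\Omega_\s(n-1,n-1)$, which agrees with the stated formula (empty sum). Assume now $3 < k < n$. By \thmref{thm:Claim1}, any such $S$ is wedge-decomposable, and by \corref{cor:possibleNucleus} we may take a decomposition $1 < K \le H < Z_{2^n}$ with $S_H$ wedge-indecomposable. Since $\omega(S_H)=\omega(S)\ne \Q$, the trivial case is ruled out by \propref{prop:rationalAll}, so $S_H$ is an indecomposable orbit ring; \propref{cor:distinctRationals} together with minimality of the layer of $\omega(S_H)$ force $H=Z_{2^k}$ and $S_H=\s(Z_{2^k})$. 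As in the proof of \propref{prop:formulaLowerLayers}, we may take $K=Z_2$, and write $S=\s(Z_{2^k})\wedge T$ for some Schur ring $T$ over $Z_{2^n}/Z_2 \cong Z_{2^{n-1}}$.

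The wedge-compatibility condition requires $Z_{2^{k-1}}=Z_{2^k}/Z_2$ to be a $T$-subgroup with $T_{Z_{2^{k-1}}}=\pi(\s(Z_{2^k}))$. A direct coset computation (or functoriality of the orbit construction under the quotient $Z_{2^k}\to Z_{2^{k-1}}$) shows $\pi(\s(Z_{2^k}))=\s(Z_{2^{k-1}})$; since $k-1\ge 3$, the field $\Q(\zeta_{2^{k-1}}+\zeta_{2^{k-1}}^{-1})$ lies in the top layer of $\L_{2^{k-1}}$, and \thmref{thm:topLayerRational} makes the condition $T_{Z_{2^{k-1}}}=\s(Z_{2^{k-1}})$ equivalent to $\omega(T_{Z_{2^{k-1}}})=\Q(\zeta_{2^{k-1}}+\zeta_{2^{k-1}}^{-1})$. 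By \corref{cor:Chuck2} this in turn becomes the condition $\omega(T)\cap \K_{2^{k-1}}=\Q(\zeta_{2^{k-1}}+\zeta_{2^{k-1}}^{-1})$.

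Finally, I would enumerate the subfields $F\in \L_{2^{n-1}}$ satisfying $F\cap \K_{2^{k-1}}=\Q(\zeta_{2^{k-1}}+\zeta_{2^{k-1}}^{-1})$ using the explicit lattice description from \secref{sec:cycFields}. In layer $k-1$, only $F=\Q(\zeta_{2^{k-1}}+\zeta_{2^{k-1}}^{-1})$ itself qualifies, since $\Q(\zeta_{2^{k-1}})$ restricts to $\K_{2^{k-1}}$ and $\Q(\zeta_{2^{k-1}}-\zeta_{2^{k-1}}^{-1})$ restricts to itself. For each $j$ with $k\le j\le n-1$, the computation recorded in \secref{sec:cycFields} gives $\Q(\zeta_{2^j}\pm\zeta_{2^j}^{-1})\cap \K_{2^{k-1}}=\Q(\zeta_{2^{k-1}}+\zeta_{2^{k-1}}^{-1})$, so exactly the two ``symmetric'' layer-$j$ fields qualify, while $\Q(\zeta_{2^j})$ restricts to $\K_{2^{k-1}}$ itself. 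Applying \propref{prop:2formulaSameLayer}, each qualifying field in layer $j\ge k$ contributes $\Omega_\s(n-1,j)$ Schur rings and the layer-$(k-1)$ field contributes $\Omega_\s(n-1,k-1)$, which assembles into the claimed identity. The principal technical obstacle is rigorously verifying $\pi(\s(Z_{2^k}))=\s(Z_{2^{k-1}})$ and confirming that the list of qualifying subfields is exhaustive, since the non-cyclic Galois structure of $\G_{2^{n-1}}$ prevents the uniform ``one-per-layer'' enumeration available in the odd-prime setting.
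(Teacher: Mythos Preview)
Your proposal is correct and follows essentially the same approach as the paper: reduce to counting Schur rings $T$ over $Z_{2^{n-1}}$ with $T\cap\Q[Z_{2^{k-1}}]=\s(Z_{2^{k-1}})$, translate via \corref{cor:Chuck2} into the subfield condition $\omega(T)\cap\K_{2^{k-1}}=\Q(\zeta_{2^{k-1}}+\zeta_{2^{k-1}}^{-1})$, enumerate the qualifying fields layer by layer, and invoke \propref{prop:2formulaSameLayer}. Your write-up simply unpacks more of the details (the wedge decomposition, the identification $\pi(\s(Z_{2^k}))=\s(Z_{2^{k-1}})$, and the layer-by-layer case analysis) that the paper compresses into the phrase ``following the same reasoning as \eqref{eq:formulaLowerLayers}.''
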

\begin{proof}
Following the same reasoning as \eqref{eq:formulaLowerLayers}, we see \eqref{eq:SformulaMiddle} is true for $k = n$ by \eqref{eq:SformulaTopLayer} and for $k < n$, it suffices to count the number of Schur rings $T$ over $Z_{2^{n-1}}$ for which $T\cap \Q[Z_{2^{k-1}}] = \s(Z_{2^{k-1}})$. This is exactly the number of Schur rings over $Z_{2^{n-1}}$ which map onto $\Q(\zeta_{2^j}+\zeta_{2^j}^{-1})$ for $k-1\le j \le n-1$ or onto $\Q(\zeta_{2^j}-\zeta_{2^j}^{-1})$ for $k-1 <j \le n-1$. The result then follows from \propref{prop:2formulaSameLayer}.
\end{proof}

\begin{Prop} For $n > 3$,
\begin{equation}\label{eq:SformulaBase} \Omega_\s(n,3) = \Omega(n-1,2) + 2\sum_{j=3}^{n-1} \Omega_\s(n-1,j)\end{equation}
\end{Prop}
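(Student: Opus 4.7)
The plan is to adapt the wedge-decomposition bijection used in the proof of \eqref{eq:SformulaMiddle}. Since $n>3$, $\Q(\zeta_8+\zeta_8^{-1})$ lies strictly below the top layer of $\L_{2^n}$, so by \thmref{thm:Claim1} every Schur ring $S$ over $Z_{2^n}$ with $\omega(S)=\Q(\zeta_8+\zeta_8^{-1})$ is wedge-decomposable; arguing as in \propref{prop:formulaLowerLayers}, each such $S$ corresponds uniquely to a wedge decomposition $S=\s(Z_8)\wedge T$ with $T$ a Schur ring over $Z_{2^{n-1}}$ satisfying $T_{Z_4}=\s(Z_4)$ (since the image of $\s(Z_8)$ under the quotient by $Z_2$ is $\s(Z_4)$). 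I would therefore reduce the problem to counting Schur rings $T$ over $Z_{2^{n-1}}$ with $T_{Z_4}=\s(Z_4)$, partitioning them according to $\omega(T)$.

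Next I would use the fact that $\omega(\s(Z_4))=\Q$ together with \corref{cor:Chuck2} to force $\omega(T)\cap\Q(i)=\Q$; by the lattice description in \secref{sec:cycFields}, the image $\omega(T)$ must then be either $\Q$ or one of $\Q(\zeta_{2^j}\pm\zeta_{2^j}^{-1})$ for $3\le j\le n-1$. For $\omega(T)=\Q(\zeta_{2^j}+\zeta_{2^j}^{-1})$, combining \thmref{thm:Claim1}, \corref{cor:possibleNucleus}, and \thmref{thm:topLayerRational} will force $T_{Z_{2^j}}=\s(Z_{2^j})$, so $T_{Z_4}=\s(Z_4)$ is automatic by restriction. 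The parallel argument for $\omega(T)=\Q(\zeta_{2^j}-\zeta_{2^j}^{-1})$ will use $\s'(Z_{2^j})=\Q[Z_{2^j}]^{\langle\sigma_{2^{j-1}-1}\rangle}$; a short orbit calculation (noting that $\sigma_{2^{j-1}-1}$ fixes the order-$2$ element of $Z_4$ and interchanges its two order-$4$ elements) will confirm $\s'(Z_{2^j})_{Z_4}=\s(Z_4)$. By \propref{prop:2formulaSameLayer}, these two families together contribute $2\sum_{j=3}^{n-1}\Omega_\s(n-1,j)$.

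The hard part will be the case $\omega(T)=\Q$, which has no analog in the proof of \eqref{eq:SformulaMiddle}: there the condition $\omega(T_{Z_{2^{k-1}}})=\Q(\zeta_{2^{k-1}}+\zeta_{2^{k-1}}^{-1})$ pinned down $T_{Z_{2^{k-1}}}$ uniquely via \thmref{thm:topLayerRational}, but now both $\Q[Z_4]^0$ and $\s(Z_4)$ map to $\Q$, so I must distinguish them by hand. I would invoke \thmref{Thm:Claim7} to write $T=\Q[Z_{2^k}]^0\wr T'$ for some $k\ge 1$, then argue that the requirement that $\{z^{2^{n-2}}\}$ be a $T$-class forces $k=1$, since for $k\ge 2$ the unique order-$2$ element of $Z_{2^{n-1}}$ lies in the single big class $Z_{2^k}\setminus\{1\}$. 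With $k=1$, so $T=\Q[Z_2]\wr T'$, the remaining requirement that $\{z^{2^{n-3}},z^{-2^{n-3}}\}$ form a $T$-class will translate through the quotient $Z_{2^{n-1}}\to Z_{2^{n-2}}$ into the condition that $Z_2\le Z_{2^{n-2}}$ be a $T'$-subgroup, and the proof of \propref{prop:formulaSecondLayer} (applied with $n-1$ in place of $n$) gives exactly $\Omega(n-1,2)$ such $T'$. Summing the three contributions will then yield the claimed identity.
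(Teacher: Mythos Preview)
Your proposal is correct and follows essentially the same route as the paper's own proof: both reduce the count to Schur rings $T$ over $Z_{2^{n-1}}$ with $T_{Z_4}=\s(Z_4)$ (the paper writes this as $T\cap\Q[Z_4]=\Q[Z_2]\wr\Q[Z_2]$, which is the same ring), then split according to $\omega(T)$ and handle the $\omega(T)=\Q$ case via \thmref{Thm:Claim7} and \propref{prop:formulaSecondLayer}. Your treatment is in fact more explicit than the paper's in justifying why $\omega(T)$ cannot equal $\Q(\zeta_{2^j})$ (via \corref{cor:Chuck2}) and why the $\pm$ families automatically satisfy $T_{Z_4}=\s(Z_4)$.
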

\begin{proof}
Following the same reasoning as \eqref{eq:formulaLowerLayers}, it suffices to count the number of Schur rings $T$ over $Z_{2^{n-1}}$ for which $T \cap \Q[Z_{4}] = \Q[Z_2]\wr \Q[Z_2]$. This includes the Schur rings over $Z_{2^{n-1}}$ which map onto $\Q(\zeta_{2^j}+\zeta_{2^j}^{-1})$ for $3\le j \le n-1$ or onto $\Q(\zeta_{2^j}-\zeta_{2^j}^{-1})$ for $3 \le j \le n-1$. On the other hand, no Schur ring which maps onto $\Q(\zeta_{2^j})$ has this property for $j >1$. It remains to examine which Schur rings that map onto $\Q$ have this property. By \thmref{Thm:Claim7}, any Schur ring over $Z_{2^{n-1}}$ mapping onto $\Q$ has the form $T = \Q[Z_2]\wr T'$ for some Schur ring $T'$ over $Z_{2^{n-2}}$ such that $T' \cap \Q[Z_2] = \Q[Z_2]$, since $T\cap \Q[Z_4] = \Q[Z_2]\wr\Q[Z_2]$. As was seen in the proof of \propref{prop:formulaSecondLayer}, the number of choices for $T'$ is $\Omega(n-1,2)$.  The result then follows from \propref{prop:2formulaSameLayer}.
\end{proof}

Next, we need to expand $2\sum_{k=3}^n \Omega_\s(n,k)$ using \eqref{eq:SformulaMiddle} and \eqref{eq:SformulaBase}. This will produce an equation of the following form:
\begin{equation}\label{eq:SchroderCoefficients} 2\sum_{k=3}^n \Omega_\s(n,k) = \sum_{i=1}^{n-2} s_{i}\Omega(n-i, 2)\end{equation} for some positive integers $s_i$. In particular, the $j$th iteration of \eqref{eq:SformulaMiddle} and \eqref{eq:SformulaBase} will produce an equation of the form 
\begin{equation}\label{eq:SuperCatalanCoefficientsTriangle} 2\sum_{k=3}^n \Omega_\s(n,k) = \sum_{i=1}^{j-1} s_{i}\Omega(n-i,2) + \sum_{k=j+1}^{n} s_{jk} \Omega_\s(n-j, k-j)\end{equation} for some positive integers $s_{jk}$. We note that $s_{i(i+1)} = s_i$ and $s_{0k} = 1$ for all $k$. Furthermore, 
\begin{equation}\label{eq:SuperCatalanTriangle1} s_{jk} = s_{(j-1)k} + 2\sum_{\ell=j}^{k-1} s_{(j-1)\ell}\end{equation} by \eqref{eq:SformulaMiddle}. 
When $0 < j < k-1$, \eqref{eq:SuperCatalanTriangle1} can be rewritten recursively to give 
\begin{equation}\label{eq:SuperCatalanTriangle2} s_{jk} = s_{(j-1)k} + s_{j(k-1)} + s_{(j-1)(k-1)}.\end{equation}  From \eqref{eq:SuperCatalanTriangle2}, we can create a triangular array of integers, depicted in \figref{tab:SuperCatalanTriangleSmall}, where $k$ indexes the rows ($k \ge 1$) and $j$ indexes the columns ($0\le j < k$). The diagonal entries of the triangle give the values of $s_i$.

\begin{figure}[htbp]
\begin{center}
\caption{The Triangular Array of $s_{jk}$ Coefficients }
\begin{tabular}{cccccccc}
1 & &  &  &  &  &  &  \\ 
1 & 3 & &  &  &  &  &  \\ 
1 & 5 & 11 &  &  &  &  &  \\ 
1 & 7 & 23 & 45 &  &  &  &  \\ 
1 & 9 & 39 & 107 & 197 &  &  &  \\ 
1 & 11 & 59 & 205 & 509 & 903 &  &  \\ 
1 & 13 & 83 & 347 & 1061 & 2473 & 4279 & \\ 
1 & 15 & 111 & 541 & 1949 & 5483 & 12235 & 20793 \\ 
\end{tabular}
\label{tab:SuperCatalanTriangleSmall}
\end{center}
\end{figure}

\begin{Lem}\label{lem:formulaSchroder} Let $s_i$ be the coefficients given in Equation \eqref{eq:SchroderCoefficients}. Then $s_i = \sum_{j=0}^i \frac{1}{j+1}\binom{2j}{2}\binom{i+j}{2j}$, that is, $s_i$ is the $i$th Schr\"{o}der number.
\end{Lem}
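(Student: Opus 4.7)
The plan is to mimic the proof of \lemref{lem:formulaCatalan} as closely as possible. Just as Catalan's triangle was completed by setting $c_{00}=1$ and $c_{jj}=c_{(j-1)j}$, I would extend the array of $s_{jk}$ by setting $s_{00}=1$ and $s_{jj}=s_{(j-1)j}$ for $j>0$. With this convention, the diagonal of the completed triangle contains precisely the coefficients $s_i$ in \eqref{eq:SchroderCoefficients}, because the definition $s_{i(i+1)}=s_i$ propagates one step down into $s_{(i+1)(i+1)}=s_i$ after applying the extended recurrence consistently.

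Next I would verify that the extended triangle is exactly the well-known Schröder (Aztec-diamond/super-Catalan) triangle. Two things need checking: the correct boundary data and the correct recurrence. For the boundary, $s_{0k}=1$ for all $k$ and $s_{00}=1$ hold by construction. For the interior, \eqref{eq:SuperCatalanTriangle2} gives $s_{jk}=s_{(j-1)k}+s_{j(k-1)}+s_{(j-1)(k-1)}$, which is the standard Delannoy-type recurrence characterizing this triangle, and one checks that the extended diagonal values $s_{jj}=s_{(j-1)j}$ are consistent with what that recurrence forces at the boundary. Once the array agrees with the Schröder triangle in boundary and recurrence, it agrees everywhere, and in particular the diagonal entries $1,3,11,45,197,903,4279,20793,\ldots$ match \figref{tab:SuperCatalanTriangleSmall}.

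It is then a standard combinatorial fact (see, e.g., the OEIS entry for the (super-)Catalan/Schröder--Hipparchus numbers, which should be cited in the same way \cite{CatalanTriangle} was cited in the Catalan case) that the diagonal entries of this triangle are the Schröder numbers and admit the closed form $s_i=\sum_{j=0}^i \tfrac{1}{j+1}\binom{2j}{2}\binom{i+j}{2j}$. Thus, invoking the cited identity finishes the proof.

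The main obstacle is really just the bookkeeping of reconciling the two pieces of data that define $s_i$: the recursive unfolding of $\Omega_\s(n,k)$ that produced \eqref{eq:SchroderCoefficients}, and the intrinsic recurrence \eqref{eq:SuperCatalanTriangle2} of the triangle. The content of the lemma is that these match the Schröder triangle once the diagonal is filled in correctly, so the subtle step is choosing the right extension $s_{jj}=s_{(j-1)j}$ so that the local recurrence is unambiguously satisfied on the diagonal; the remaining closed-form identity is then a citation rather than a calculation.
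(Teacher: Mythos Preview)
Your proposal is essentially identical to the paper's proof: both extend the array by setting $s_{00}=1$ and $s_{jj}=s_{(j-1)j}$, identify the resulting triangle as the Super-Catalan (Schr\"oder) triangle via its boundary data and the recurrence \eqref{eq:SuperCatalanTriangle2}, and then cite the known fact that its diagonal entries are the little Schr\"oder (super-Catalan) numbers. The only cosmetic difference is that the paper explicitly names the array as the Super-Catalan Triangle and remarks that doubling and reindexing the little Schr\"oder numbers yields the (big) Schr\"oder numbers, whereas you leave this terminological bookkeeping implicit.
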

\begin{proof}
Like in \lemref{lem:formulaSchroder}, we define $s_{00} =1$ and $s_{jj} = s_{(j-1)j}$ for $j > 0$. Now, this new triangular array is known as the Super-Catalan Triangle. One property of this triangle is that the sequence of diagonal entries is the sequence of super-Catalan numbers, also known as the little Schr\"{o}der numbers \cite{SuperCatalanTriangle}. Multiplying the little Schr\"{o}der numbers by two and reindexing gives the Schr\"oder numbers. 
\end{proof}


\begin{Thm}\label{thm:2formula3rd} The number of Schur rings over $Z_{2^n}$, where $n \ge 2$, is given by the following recursive equation:
\begin{equation}\label{eq:2formula3rd} \Omega(n) = \sum_{k=1}^3 2^k\Omega(n-k) - (c_{n-1} + s_{n-1}) + \sum_{k=4}^n\left(c_{k-1} + s_{k-1}-\sum_{j=1}^{k-3}(c_j+s_j)\right)\Omega(n-k) \end{equation}where $\Omega(0) =1$, $\Omega(1) = 1$, $\Omega(2) = 3$, $\Omega(3) = 10$, $c_k = \dfrac{1}{k+1}\dbinom{2k}{k}$ is the $k$th Catalan number, and $s_k = \sum_{j=0}^k \frac{1}{j+1}\binom{2j}{2}\binom{k+j}{2j}$  is the $k$th Schr\"{o}der number.
\end{Thm}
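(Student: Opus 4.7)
The plan is to mirror the derivation for the odd-prime case (\thmref{thm:formula3rd}), adapted to the two interleaved coefficient sequences (Catalan and Schr\"oder) that arise when $p=2$. Starting from equation \eqref{eq:2formula2nd} and applying \lemref{lem:formulaSchroder} through \eqref{eq:SchroderCoefficients}, we obtain
\[
\Omega(n) = \Omega(n, 0) + \sum_{k=0}^{n-2} c_k \Omega(n-k,2) + \sum_{k=1}^{n-2} s_k \Omega(n-k,2).
\]

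From here, I would rewrite each right-hand quantity purely in terms of $\Omega(j)$ for $j < n$. By \propref{prop:formulaZeroLayer}, $\Omega(n,0) = \sum_{j=0}^{n-1}\Omega(j)$, and for each $m\ge 3$, \propref{prop:formulaSecondLayer} combined with \propref{prop:formulaZeroLayer} yields $\Omega(m,2) = \Omega(m-1) - \sum_{j=0}^{m-3}\Omega(j)$, with the boundary value $\Omega(2,2) = 1$ contributing a standalone constant. After substitution, swapping the order of summation in the arising tail sums produces exactly the expression $\sum_{j=1}^{k-3}(c_j+s_j)$ featured in the theorem, while the leading $\Omega(m-1)$ pieces yield the $c_{k-1}+s_{k-1}$ contributions, giving the general coefficient of $\Omega(n-k)$ for $k\ge 4$.

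The coefficients of $\Omega(n-1)$, $\Omega(n-2)$, $\Omega(n-3)$ collapse to the clean powers $2, 4, 8$ using the small values $c_0=1$, $c_1+s_1 = 1+2 = 3$, $c_2+s_2 = 2+6 = 8$; the standalone constant $-(c_{n-1}+s_{n-1})$ emerges by combining the $\Omega(2,2)=1$ boundary contribution with the top-order telescoping of the tail sums. Finally, the initial values $\Omega(0)=1$, $\Omega(1)=1$, $\Omega(2)=3$, $\Omega(3)=10$ follow by direct enumeration using \corref{thm:pnLeungMan} and the preceding propositions of this section (and have effectively been checked in the computations leading up to \eqref{eq:2formula1st}).

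The hard part will be the combinatorial bookkeeping of these interleaved sums. Unlike the odd-prime case where a single Catalan sequence sufficed, the even case must juggle both $c_k$ and $s_k$ simultaneously, with their asymmetric starting indices (the Catalan sum begins at $k=0$, the Schr\"oder sum at $k=1$), and with the top-layer irregularities where \propref{prop:formulaSameLayer} failed and \propref{prop:2formulaSameLayer} had to be invoked as a substitute. Keeping the index conventions straight through reindexing, and verifying that the $\Omega(2,2)=1$ contribution is correctly absorbed into the standalone constant $-(c_{n-1}+s_{n-1})$ rather than being double-counted in the $k=n$ term of the tail sum, is where the argument demands the most care.
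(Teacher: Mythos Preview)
Your proposal is correct and follows essentially the same route as the paper: start from \eqref{eq:2formula2nd}, apply \lemref{lem:formulaSchroder} via \eqref{eq:SchroderCoefficients}, then substitute \propref{prop:formulaZeroLayer} and \propref{prop:formulaSecondLayer} and reindex. One small caution on the bookkeeping: the boundary value $\Omega(2,2)=1$ is actually absorbed as $(c_{n-2}+s_{n-2})\Omega(1)$ (extending the first sum to $k=n-1$), while the standalone constant $-(c_{n-1}+s_{n-1})$ arises separately from extending that same sum one step further to $k=n$ using $\Omega(0)=1$; keeping these two extensions distinct is exactly the care you flagged.
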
 

For $n < 4$, we consider the second sum in \eqref{eq:2formula3rd} to be empty. Also, we define $\Omega(-1) = 0$, which appear in \eqref{eq:2formula3rd} for $n=2$.

\begin{proof}
By \eqref{eq:2formula2nd}, 
\[ \Omega(n) = \Omega(n,0) +  \sum_{k=0}^{n-2} c_k\Omega(n-k,2) + 2\sum_{k=3}^n \Omega_\s(n,k),\] which by \lemref{lem:formulaSchroder}, can be rewritten as 
\begin{eqnarray*}
 \Omega(n) &=& \Omega(n,0) +  \sum_{k=0}^{n-2} c_k\Omega(n-k,2) + \sum_{k=1}^{n-2} s_k\Omega(n-k,2)\\
 &=& \Omega(n,0) +  \Omega(n,2) + \sum_{k=1}^{n-2} (c_k+s_k)\Omega(n-k,2)\\
 &=& \Omega(n,0) +  \Omega(n,2) + (c_{n-2}+s_{n-2}) +  \sum_{k=1}^{n-3} (c_k+s_k)\Omega(n-k,2).
\end{eqnarray*}
We next can apply \propref{prop:formulaSecondLayer} to the above equation:
\begin{eqnarray*}
\Omega(n) &=& \Omega(n,0) +  [\Omega(n-1) - \Omega(n-2,0)] + (c_{n-2}+s_{n-2})\\
&&\qquad\qquad +  \sum_{k=1}^{n-3} (c_k+s_k)[\Omega(n-k-1) - \Omega(n-k-2,0)]\\
&=& \Omega(n-1) + \sum_{k=1}^{n-3}(c_{k}+s_{k})\Omega(n-k-1) + (c_{n-2}+s_{n-2}) + \Omega(n,0)\\
&& \qquad\qquad - \Omega(n-2,0) - \sum_{k=1}^{n-3} (c_k + s_k)\Omega(n-k-2,0)\\
&=& \Omega(n-1) + \sum_{k=2}^{n-2}(c_{k-1}+s_{k-1})\Omega(n-k) + (c_{n-2}+s_{n-2}) + \Omega(n,0)\\
&& \qquad\qquad - \Omega(n-2,0) - \sum_{k=3}^{n-1} (c_{k-2} + s_{k-2})\Omega(n-k,0)\\
&=& \Omega(n-1) + \sum_{k=2}^{n-1}(c_{k-1}+s_{k-1})\Omega(n-k) + \Omega(n,0)- \Omega(n-2,0)\\
&& \qquad\qquad  - \sum_{k=3}^{n-1} (c_{k-2} + s_{k-2})\Omega(n-k,0).
\end{eqnarray*}
Next we apply \propref{prop:formulaZeroLayer} to the above equation:
\[
\Omega(n) =  2\Omega(n-1) + \Omega(n-2) + \sum_{k=2}^{n-1}(c_{k-1}+s_{k-1})\Omega(n-k) - \sum_{k=3}^{n-1} (c_{k-2} + s_{k-2})\sum_{j=0}^{n-k-1}\Omega(j).\]
We note that 
\begin{eqnarray*}
\sum_{k=3}^{n-1} (c_{k-2} + s_{k-2})\sum_{j=0}^{n-k-1}\Omega(j) &=& \sum_{k=3}^{n-1}\sum_{j=0}^{n-k-1} (c_{k-2} + s_{k-2})\Omega(j) = \sum_{j=0}^{n-4}\sum_{k=3}^{n-j-1} (c_{k-2} + s_{k-2})\Omega(j)\\
&=& \sum_{k=0}^{n-4}\sum_{j=3}^{n-k-1} (c_{j-2} + s_{j-2})\Omega(k) =  \sum_{k=4}^{n}\sum_{j=3}^{k-1} (c_{j-2} + s_{j-2})\Omega(n-k).\\
\end{eqnarray*}
Therefore,
\begin{eqnarray*}
\Omega(n) &=& 2\Omega(n-1) + \Omega(n-2) + \sum_{k=2}^{n-1}(c_{k-1}+s_{k-1})\Omega(n-k) - \sum_{k=4}^{n}\sum_{j=3}^{k-1} (c_{j-2} + s_{j-2})\Omega(n-k)\\
&=& 2\Omega(n-1) + 4\Omega(n-2) + 8\Omega(n-3) - (c_{n-1} + s_{n-1})\\
&& \qquad\qquad + \sum_{k=2}^{n-1}\left(c_{k-1}+s_{k-1} - \sum_{j=3}^{k-1}(c_{j-2}+s_{j-2})\right)\Omega(n-k) \\
&=& \sum_{k=1}^3 2^k\Omega(n-k) - (c_{n-1} + s_{n-1}) + \sum_{k=4}^n\left(c_{k-1} + s_{k-1}-\sum_{j=1}^{k-3}(c_j+s_j)\right)\Omega(n-k). \qedhere
\end{eqnarray*}
\end{proof}

 \tableref{table:evenNumberSRings} lists the number of Schur rings over $Z_{2^n}$ up to the tenth power.

\begin{table}[hbtp]
\caption{Number of Schur Rings over $Z_{2^n}$}
\begin{center}
\begin{tabular}{|c|c|c|c|c|c|c|c|c|c|c|}
\hline
$n$ & \textbf{1} & \textbf{2} & \textbf{3} & \textbf{4} & \textbf{5} & \textbf{6} & \textbf{7} & \textbf{8} & \textbf{9} & \textbf{10} \\ \hline
$\Omega(n)$ & 1 & 3 & 10 & 37 & 151 & 657 & 2,989 & 14,044 & 67,626 & 332,061 \\ \hline
\end{tabular}
\end{center}
\label{table:evenNumberSRings}
\end{table}

Let $\mathcal F(z) = \sum_{n=0}^\infty \Omega(n)z^n$ be the generating function of $\Omega$, for $p=2$. Using \[\mathcal C(z) = \sum_{n=0}^\infty c_nz^n = \dfrac{1-\sqrt{1-4z}}{2z},\] the generating function for the Catalan numbers, \[\mathcal S(z) = \sum_{n=0}^\infty s_nz^n = \dfrac{1-z-\sqrt{1-6z+z^2}}{2z},\] be the generating function for the Schr\"oder numbers, and the usual generating function calculations, one computes that 
\begin{eqnarray} 
\mathcal F(z)  
&=& \dfrac{(2-z-\sqrt{1-4z}-\sqrt{1-6z+z^2})(1-z) + 2(z^2-1)}{(2-z-\sqrt{1-4z}-\sqrt{1-6z+z^2})(1-z-z^2) + 2(z^3+z^2+z-1)}.
\end{eqnarray}

\underline{Acknowledgments}: The contents of this paper are part of the author's doctoral dissertation \cite{MePhD}, written under the supervision of Stephen P. Humphries, that was submitted to Brigham Young University. All computations made in preparation of this paper were accomplished using the computer software Magma \cite{Magma} and the exact code can be found in \cite{MePhD} and \cite{Kerby}, the second reference being the master's thesis of Brent Kerby, another student of Humphries. Helpful suggestions, instructions, and guidance were offered to the author by Michael Barrus on many of combinatorical topics in this paper. Finally, acknowledgment should be given by Petr Vojtechovsky for his insights on how the topics of this paper related to the PORC conjecture.

\bibliographystyle{plain}
\bibliography{Srings}
\end{document}